\documentclass[12pt]{amsart}

\usepackage[hmargin=0.8in,height=8.6in]{geometry}
\usepackage{amssymb,amsthm, times}
\usepackage{delarray,verbatim}
\usepackage{ifpdf}
\ifpdf
\usepackage[pdftex]{graphicx}
\DeclareGraphicsRule{*}{mps}{*}{} \else
\usepackage[dvips]{graphicx}
\DeclareGraphicsRule{*}{eps}{*}{} \fi

\linespread{1.20}

\usepackage{ifpdf}
\usepackage{color}
\definecolor{webgreen}{rgb}{0,.5,0}
\definecolor{webbrown}{rgb}{.8,0,0}
\definecolor{emphcolor}{rgb}{0.95,0.95,0.95}

\usepackage{hyperref}
\hypersetup{%
          colorlinks=true,
          linkcolor=webbrown,
          filecolor=webbrown,
          citecolor=webgreen,
          breaklinks=true}
\ifpdf \hypersetup{pdftex,
            pdfstartview=FitH, 
            bookmarksopen=true,
            bookmarksnumbered=true
} \else \hypersetup{dvips} \fi

\linespread{1.2}

\renewcommand{\S}{\mathcal{S}}

\numberwithin{equation}{section}
\newtheorem{spec}{Specification}[section]

\newtheorem{proposition}{Proposition}[section]

\newtheorem{remark}{Remark}[section]
\newtheorem{lemma}{Lemma}[section]

\newtheorem{assump}{Assumption}[section]

\numberwithin{remark}{section} \numberwithin{proposition}{section}
\numberwithin{corollary}{section}
\newcommand {\R}{\mathbb{R}}

\newcommand {\F}{\mathcal{F}}

\newcommand {\p}{\mathbb{P}}

\newcommand {\E}{\mathbb{E}}

\newcommand{\diff}{{\rm d}}

\newcommand{\lev}{L\'{e}vy }

\title[continuous and smooth fit principle]{On the continuous and smooth fit principle for optimal stopping problems in spectrally negative L\'{e}vy models}
\author[M. Egami]{Masahiko Egami}
\address[M. Egami]{Graduate School of Economics,
Kyoto University, Sakyo-Ku, Kyoto, 606-8501, Japan }
\email{egami@econ.kyoto-u.ac.jp}
\urladdr{http://www.econ.kyoto-u.ac.jp/{\textasciitilde}egami/}
\thanks{This version: \today.  \\ M.\ Egami is in part supported
by Grant-in-Aid for Scientific Research (B) No.\ 22330098 and (C)
No.\ 20530340, Japan Society for the Promotion of Science.  K.\ Yamazaki is in part supported by Grant-in-Aid for Young Scientists
(B) No.\ 22710143, the Ministry of Education, Culture, Sports,
Science and Technology, and by Grant-in-Aid for Scientific Research (B) No.\  2271014, Japan Society for the Promotion of Science. The authors thank the anonymous referee for her/his thorough reviews and insightful comments that help improve the presentation of this paper.}
\author[K. Yamazaki]{Kazutoshi Yamazaki }
\address[K. Yamazaki]{Department of Mathematics,
Faculty of Engineering Science, Kansai University, Suita-shi, Osaka 564-8680, Japan}
\email{kyamazak@kansai-u.ac.jp}
\urladdr{https://sites.google.com/site/kyamazak/}
\date{}
\begin{document}
\begin{abstract}
We consider a class of infinite-time horizon optimal stopping problems for spectrally negative \lev processes.  
Focusing on strategies of threshold type, we write explicit expressions for the corresponding expected payoff via the scale function, and further pursue optimal candidate threshold levels. We obtain and show the equivalence of the continuous/smooth fit condition and the first-order condition for maximization over threshold levels.
%
 As examples of its applications, we give a short proof of the McKean optimal stopping problem (perpetual American put option) and solve an extension to Egami
and Yamazaki \cite{Egami-Yamazaki-2010-1}.

\end{abstract}

\maketitle \noindent \small{\textbf{Key words:} Optimal stopping;
 Spectrally negative \lev processes; Scale functions; Continuous and smooth fit\\
\noindent Mathematics Subject Classification (2010) : Primary: 60G40
Secondary: 60J75 }\\

\section{Introduction}
Optimal stopping problems arise in various areas ranging from the
classical sequential testing/change-point detection problems to
applications in finance.   Although all formulations reduce to the problem
of maximizing/minimizing the expected payoff over a set of
stopping times, the solution methods are mostly problem-specific;
they depend significantly on the underlying process, payoff function and time-horizon.  This paper pursues a common tool for the class
of \emph{infinite-time horizon} optimal stopping problems for \emph{spectrally
negative \lev processes}, or \lev processes with only negative jumps.

By extending the classical continuous diffusion model to the \lev model, one can achieve richer and more realistic models. In mathematical finance, the continuity of paths is empirically rejected and cannot explain, for example, the volatility smile and non-zero credit spreads for short-maturity corporate bonds.  These issues can often be alleviated by introducing jumps; see, e.g.\  \cite{Chen_Kou_2009, Kou_Wang_2004}.   Recently, we saw a significant progress in the theory of optimal stopping for \lev  processes and other jump processes.  The fluctuation theory, in particular, has been playing a key role in characterizing efficiently the value function and the optimal stopping time; see \cite{Boyarchenko_Levendorskii_2007, Christensen_2013, Mordecki_Salminen_2007,Surya_2007} among others. 

In this paper, we revisit the optimal stopping problem for a general spectrally negative \lev process, and pursue a solution in a rather straightforward way.  Despite the aforementioned results, existing results  under  \lev processes are still significantly more limited than the one-dimensional diffusion case as in \cite{alvarez2, Beibel_Lerche_2000, Christensen_2011,  DK2003, dynkin,  peskir-shiryaev}.  Without the continuity of paths, the process can jump over a given threshold. For a general payoff function, one naturally needs to take care of the overshoot distribution, which is generally a big hurdle that typically makes the problem intractable.   However, thanks to the recent advances in the fluctuation theory of spectrally negative \lev processes (see \cite{Bertoin_1996, Kyprianou_2006}), this can be handled  by using the so-called \emph{scale function}.

The objective of this paper is to pursue, with the help of the scale function, a common technique for the class of optimal stopping problems for spectrally negative \lev
processes.  Focusing on the first time it down-crosses a fixed threshold, we express the corresponding expected payoff in terms of the scale function.  This semi-explicit form enables us to differentiate and take limits thanks to the smoothness and asymptotic properties of the scale function as obtained, for example, in \cite{Chan_2009, Kyprianou_2006}.  By differentiating the expected payoff with respect to the threshold level, we obtain the \emph{first-order condition} as well as the candidate optimal level that makes it vanish. We also obtain the \emph{continuous/smooth fit condition} when the process is of bounded variation or when it contains a diffusion component.  These conditions are in fact equivalent and can be obtained generally under mild conditions.

%
%


The spectrally negative \lev model has been drawing much attention recently as a generalization of the classical Black-Scholes model in mathematical finance and also as a generalization of the  Cram\'er-Lundberg model in insurance.  A  number of authors have succeeded in extending the classical results to the spectrally negative \lev model by way of scale functions.  We refer the reader to \cite{Baurdoux2008,Baurdoux2009}  for stochastic games,  \cite{Avram_et_al_2007, Bayraktar_2012, Bayraktar_2013, Kyprianou_Palmowski_2007, Loeffen_2008}  for the optimal dividend problem, \cite{alili-kyp, avram-et-al-2004} for American and Russian options,  \cite{Leung_Yamazaki_2011, Kyprianou_Surya_2007, Leung_Yamazaki_2010} for credit risk and \cite{Yamazaki_2013} for inventory models.  In particular, Egami and Yamazaki \cite{Egami-Yamazaki-2010-1} modeled and obtained the optimal timing of capital reinforcement.  As an application of the results obtained in this paper, we give a short proof of the McKean optimal stopping (perpetual American put option) problem with additional running rewards, as well as an extension and its analytical solution to \cite{Egami-Yamazaki-2010-1}.

The rest of the paper is organized as follows.  In Section
\ref{sec:problem}, we review the optimal stopping problem for
spectrally negative \lev processes, and then
express the expected value corresponding to the first down-crossing time in terms of
the scale function.  In Section \ref{sec:fit}, we obtain the first-order condition as well as the continuous/smooth fit condition and show their equivalence. In Section
\ref{section_extension}, we solve the McKean optimal stopping problem and an extension to \cite{Egami-Yamazaki-2010-1}.  We conclude the paper in  Section \ref{section_conclusion}.

\section{The Optimal Stopping Problem for Spectrally Negative \lev Processes}\label{sec:problem}
Let $(\Omega, \F, \p)$ be a probability space hosting a
spectrally negative \lev process $X=\{X_t: t\ge 0\}$
characterized uniquely by the \emph{Laplace exponent}
\begin{align}
\psi(\beta) := \E^0 \left[ e^{\beta X_1}\right] = c \beta
+\frac{1}{2}\sigma^2 \beta^2 +\int_{( 0,\infty)}(e^{-\beta
z}-1+\beta z 1_{\{0 <z<1\}})\,\Pi(\diff z), \quad  {\beta \in
\mathbb{R}}, \label{laplace_exp}
\end{align}
where $c \in\R$, $\sigma\geq 0$ and $\Pi$ is a measure  on
$(0,\infty)$ such that 
\begin{align}
\int_{(0,\infty)} (1  \wedge z^2) \Pi(
\diff z)<\infty. \label{integrability_levy_measure}
\end{align}
Here and throughout the paper, $\p^x$ is the conditional probability where $X_0 = x \in \R$ and $\E^x$ is its expectation (also $\p \equiv \p^0$ and $\E \equiv \E^0$).  It is well-known that $\psi$ is zero at the origin, convex on $\R_+$
and has a right-continuous inverse:
\[
\Phi(q) :=\sup\{\lambda \geq 0: \psi(\lambda)=q\}, \quad q\ge 0.
\]
In particular, when
\begin{align}
\int_{(0,\infty)} (1 \wedge z)\, \Pi(\diff z) < \infty,
\label{cond_bounded_variation}
\end{align}
we can rewrite
\begin{align*}
\psi(\beta) =\mu  \beta+  \frac{1}{2}\sigma^2 \beta^2 + \int_{(
0,\infty)}(e^{-\beta z}-1)\,\Pi(\diff z), \quad \beta \in \mathbb{R}
\end{align*}
where
\begin{align*}
\mu := c + \int_{( 0,1)}z\, \Pi(\diff z). 
\end{align*}
The process has paths of bounded variation if and only if $\sigma =
0$ and \eqref{cond_bounded_variation} holds. It is also assumed that
$X$ is not a negative subordinator (decreasing a.s.). Namely, we
require $\mu$ to be strictly positive if $\sigma = 0$ and \eqref{cond_bounded_variation} holds.

Let $\mathbb{F} := (\F_t)_{t \geq 0}$ be the filtration generated by $X$ and $\S$ a set of $\mathbb{F}$-stopping times.  We shall consider a general optimal stopping problem of the form:
\begin{align}
u(x) := \sup_{\tau \in \S} \E^x \left[ e^{-q \tau}
g(X_{\tau}) 1_{\{ \tau < \infty \}}+  \int_0^{\tau} e^{-qt} h(X_t) \diff
t \right], \quad x \in \R \label{general_problem}
\end{align}
for some discount factor $q>0$ and locally-bounded measurable functions $g, h: \R \mapsto \R$ which represent, respectively,  the payoff received at a given stopping time
$\tau$ and the running reward up to $\tau$.  It is assumed here that the expectation is well-defined; we shall give further assumptions on $g$ and $h$ in order to guarantee that it is indeed so.  See Assumption \ref{assump_h} and the assumptions in Lemma \ref{lemma_lipschitz} below.

Typically, the optimal stopping time is given by
the \emph{first down-crossing time} of the form
\begin{align}
\tau_A := \inf \left\{ t > 0: X_t \leq A \right\}, \quad A \in \R, \label{def_tau_A}
\end{align}
with $\inf \emptyset = \infty$. Let us denote the corresponding expected payoff by
\begin{align*}
u_A(x) &:= \E^x \left[ e^{-q \tau_A} g(X_{\tau_A})  1_{\{ \tau_A < \infty \}} + \int_0^{\tau_{A}} e^{-qt} h(X_t) \diff t \right], \quad x, A \in \R, 
\end{align*}
which can be decomposed into
\begin{align*}
u_A(x) &= \left\{ \begin{array}{ll} \Gamma_1(x;A) +  \Gamma_2(x;A)  +  \Gamma_3(x;A), &  x > A, \\ g(x), & x \leq A, \end{array} \right.
\end{align*}
where, for every $x > A$,
\begin{align}
\begin{split}
\Gamma_1(x;A) &:=g(A) \E^x \left[ e^{-q \tau_A} \right], \\
\Gamma_2(x;A) &:= \E^x \left[ e^{-q \tau_A} (g(X_{\tau_A})-g(A)) 1_{\{X_{\tau_A} < A, \, \tau_A < \infty \}}\right], \\
\Gamma_3(x;A) &:= \E^{x}\left[ \int_0^{\tau_{A}} e^{-qt} h(X_t) \diff t \right].
\end{split} \label{definition_gamma}
\end{align}
Shortly below, we express each term via the scale function.


\begin{remark}
This paper does not consider the \emph{first up-crossing time} defined by $\tau_B^+ := \inf \left\{ t > 0: X_t \geq B \right\}$ because, for the spectrally negative \lev case, the process always creeps upward ($g(X_{\tau_B^+}) = g(B)$ a.s.\ on $\{\tau_B^+ < \infty \}$), and the expression of the expected value is much simplified.  We focus  on a more interesting and challenging case where the optimal stopping time is conjectured to be a first down-crossing time.  We refer the reader to, among others, \cite{Christensen_2013} and \cite{Salminen_2011} for related problems under a more general Markov process. 
\end{remark}

\begin{remark}
It is possible to reduce the problem so that the running reward part is zero (i.e.\ $h\equiv 0$); see, e.g., \cite{Cisse_2012} for the reduction technique.  However, we decide to solve the problem in the current form because the running reward part can be handled more easily than the stopping payoff part, especially  for the verification of optimality.   In our examples in Section \ref{section_extension}, the optimality holds under a mild monotonicity on the function $h$; this gives a more intuitive explanation about the problems and their optimal solutions. 
\end{remark}

\subsection{Scale functions} \label{subsec:scale_functions}
In this subsection, we give a brief review on the scale function that will be needed for our analysis.
For a comprehensive account of the scale function, see
\cite{Bertoin_1996,Bertoin_1997, Kyprianou_2006, Kyprianou_Surya_2007}. See \cite{Egami_Yamazaki_2010_2, Kuznetsov_2012, Surya_2008} for numerical methods for computing the
scale function.

Associated with every spectrally negative \lev process, there exists a ($q$-)scale function
%
\begin{align*}
W^{(q)}: \R \mapsto \R; \quad q\ge 0,
\end{align*}
that is continuous and strictly increasing on $[0,\infty)$ and is
uniquely determined by
\begin{align*}
\int_0^\infty e^{-\beta x} W^{(q)}(x) \diff x = \frac 1
{\psi(\beta)-q}, \qquad \beta > \Phi(q).
\end{align*}

Fix $a > x > 0$.  If $\tau_a^+$ is the first time the process goes above $a$ and $\tau_0$ is the first time it goes below zero as a special
case of \eqref{def_tau_A}, then we have
\begin{align*}
\E^x \left[ e^{-q \tau_a^+} 1_{\left\{ \tau_a^+ < \tau_0, \, \tau_a^+ < \infty
\right\}}\right] = \frac {W^{(q)}(x)}  {W^{(q)}(a)} \quad
\textrm{and}  \quad \E^x \left[ e^{-q  \tau_0} 1_{\left\{ \tau_a^+ >
 \tau_0, \, \tau_0 < \infty \right\}}\right] = Z^{(q)}(x) - Z^{(q)}(a) \frac {W^{(q)}(x)}
{W^{(q)}(a)},
\end{align*}
where
\begin{align*}
Z^{(q)}(x) := 1 + q \int_0^x W^{(q)}(y) \diff y, \quad x \in \R.
\end{align*}
Here we have
\begin{equation}\label{eq:at-zero}
W^{(q)}(x)=0 \quad\text{on} \quad  (-\infty,0)\quad\text{ and}\quad
Z^{(q)}(x)=1 \quad\text{on}\quad (-\infty,0].
\end{equation}
We also have
\begin{align}
\E^x \left[ e^{-q  \tau_0} \right] = Z^{(q)}(x) - \frac q {\Phi(q)} W^{(q)}(x), \quad x > 0. \label{laplace_tau_0}
\end{align}

 In particular, $W^{(q)}$ is continuously differentiable on $(0,\infty)$ if $\Pi$ does not have atoms and $W^{(q)}$ is twice-differentiable on $(0,\infty)$ if $\sigma > 0$; see, e.g., \cite{Chan_2009}.  Throughout this paper, we assume the former.
\begin{assump}
We assume that $\Pi$ does not have atoms.
\end{assump}

Fix $q > 0$.  The scale function increases exponentially; 
\begin{align}
W^{(q)} (x) \sim \frac {e^{\Phi(q) x}} {\psi'(\Phi(q))} \quad
\textrm{as } \; x \uparrow \infty.
\label{scale_function_asymptotic}
\end{align}
There exists a (scaled) version of the scale function $ W_{\Phi(q)}
= \{ W_{\Phi(q)} (x); x \in \R \}$ that satisfies
\begin{align}
W_{\Phi(q)} (x) = e^{-\Phi(q) x} W^{(q)} (x), \quad x \in \R \label{W_scaled}
\end{align}
and
\begin{align*}
\int_0^\infty e^{-\beta x} W_{\Phi(q)} (x) \diff x &= \frac 1
{\psi(\beta+\Phi(q))-q}, \quad \beta > 0.
\end{align*}
Moreover $W_{\Phi(q)} (x)$ is increasing, and as is clear from
\eqref{scale_function_asymptotic},
\begin{align}
W_{\Phi(q)} (x) \uparrow \frac 1 {\psi'(\Phi(q))} \quad \textrm{as }
\; x \uparrow \infty. \label{scale_function_asymptotic_version}
\end{align}

Regarding its behavior in the neighborhood of zero, it is known that
\begin{align}
W^{(q)} (0) = \left\{ \begin{array}{ll} 0, & \textrm{unbounded
variation} \\ \frac 1 {\mu}, & \textrm{bounded variation}
\end{array} \right\} \quad \textrm{and} \quad W^{(q)'} (0+) =
\left\{ \begin{array}{ll}  \frac 2 {\sigma^2}, & \sigma > 0 \\
\infty, & \sigma = 0 \; \textrm{and} \; \Pi(0,\infty) = \infty \\
\frac {q + \Pi(0,\infty)} {\mu^2}, & \textrm{compound Poisson}
\end{array} \right\}; \label{at_zero}
\end{align}
see Lemmas 4.3 and 4.4 of 
\cite{Kyprianou_Surya_2007}.


\subsection{Expressing the expected payoff using the scale function}
We now express \eqref{definition_gamma} in terms of the scale function.  For the rest of the paper, because $q > 0$, we must have $\Phi(q) > 0$.

First, the following is immediate by \eqref{laplace_tau_0}.
\begin{lemma} \label{lemma_gamma_1}
For every $x > A$, we have 
\begin{align*}
\Gamma_1(x;A) =  g(A) \left[ Z^{(q)}(x-A) - \frac q {\Phi(q)} W^{(q)}(x-A) \right].
\end{align*}
\end{lemma}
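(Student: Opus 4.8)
The plan is to reduce the identity to \eqref{laplace_tau_0} by exploiting the spatial homogeneity of the L\'evy process $X$. Recall that, by definition, $\Gamma_1(x;A) = g(A)\,\E^x[e^{-q\tau_A}]$ with $\tau_A = \inf\{t>0 : X_t \le A\}$; since $q>0$ we have $e^{-q\tau_A}=0$ on $\{\tau_A=\infty\}$, so the indicator $1_{\{\tau_A<\infty\}}$ may be dropped and we only need to compute $\E^x[e^{-q\tau_A}]$.

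First I would invoke translation invariance. Under $\p^x$ the shifted process $\{X_t - A : t\ge 0\}$ is again a spectrally negative L\'evy process with the same Laplace exponent \eqref{laplace_exp}, started from $x - A$, so its law under $\p^x$ coincides with the law of $X$ under $\p^{x-A}$. Under this identification the first passage time $\tau_A = \inf\{t>0: X_t \le A\}$ corresponds exactly to $\tau_0 = \inf\{t>0: X_t \le 0\}$, i.e.\ the special case $A=0$ of \eqref{def_tau_A} used in Section \ref{subsec:scale_functions}; note in particular that the strict requirement $t>0$ is unaffected by the spatial shift. Consequently
\begin{align*}
\E^x\!\left[e^{-q\tau_A}\right] = \E^{x-A}\!\left[e^{-q\tau_0}\right].
\end{align*}

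Finally, since $x > A$ we have $x-A>0$, so \eqref{laplace_tau_0} applies with $x$ there replaced by $x-A$, giving $\E^{x-A}[e^{-q\tau_0}] = Z^{(q)}(x-A) - \tfrac{q}{\Phi(q)}W^{(q)}(x-A)$; multiplying through by $g(A)$ yields the assertion. I do not anticipate any real obstacle here: the only points requiring a word of care are the convention on $\{\tau_A=\infty\}$ and the invariance of the defining event under the shift by $A$, both of which are routine, and the fact that \eqref{laplace_tau_0} already encodes the possibility that $\tau_0$ is infinite with positive probability through the $\tfrac{q}{\Phi(q)}W^{(q)}$ term.
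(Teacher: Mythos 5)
Your proposal is correct and follows exactly the route the paper intends: the paper simply declares the lemma "immediate by \eqref{laplace_tau_0}", and your argument spells out the spatial homogeneity reduction of $\tau_A$ to $\tau_0$ plus the convention on $\{\tau_A=\infty\}$ that makes that one-line claim rigorous.
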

For $\Gamma_2$ and $\Gamma_3$, we use the potential measure written in terms of the scale function.  
For the problem to be well-defined, we assume throughout the paper the following so that $\Gamma_3$ is finite.  For a complete proof of Lemma \ref{lemma_gamma_3} below, see \cite{Egami-Yamazaki-2010-1}.
\begin{assump} \label{assump_h} We assume that $\int_0^\infty e^{-\Phi(q)y} | h(y) | \diff y < \infty$.
\end{assump}
\begin{lemma} \label{lemma_gamma_3}For all $x > A$, we have
\begin{align*}
\Gamma_3(x;A) =
W^{(q)} (x-A) \int_0^\infty e^{- \Phi(q) y}   h(y+A) \diff y -
\int_A^x W^{(q)} (x-y) h(y) \diff y.
\end{align*}
\end{lemma}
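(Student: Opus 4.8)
The plan is to start from the resolvent (potential measure) identity for the killed process that is displayed just before the statement, namely
\[
\E^x \left[ \int_0^{\tau_{A} \wedge \tau^+_a} e^{-qt} 1_{\left\{ X_t \in B \right\}} \diff t\right] = \int_{B \cap [A,\infty)} \left[ \frac {W^{(q)}(x-A) W^{(q)} (a-y)} {W^{(q)}(a-A)} - 1_{\{ x \geq y \}} W^{(q)} (x-y) \right] \diff y,
\]
and first rewrite the expectation by taking $B = \R$, so that the left-hand side becomes $\E^x [ \int_0^{\tau_A \wedge \tau_a^+} e^{-qt} h(X_t) \diff t ]$ once one integrates $h$ against the density on the right; more precisely I would apply the identity with $B = \diff y$ and integrate, using that $X_t \in [A,\infty)$ for $t < \tau_A$, to get
\[
\E^x \left[ \int_0^{\tau_{A} \wedge \tau^+_a} e^{-qt} h(X_t) \diff t\right] = \frac{W^{(q)}(x-A)}{W^{(q)}(a-A)} \int_A^\infty W^{(q)}(a-y) h(y) \diff y - \int_A^x W^{(q)}(x-y) h(y) \diff y.
\]
Note the last integral only runs up to $x$ because $W^{(q)}$ vanishes on $(-\infty,0)$ by \eqref{eq:at-zero}, so the indicator $1_{\{x \ge y\}}$ can be absorbed into the limits.

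Next I would pass to the limit $a \uparrow \infty$. For the second term there is nothing to do. For the first term, substitute $z = a - y$ to write $\int_A^\infty W^{(q)}(a-y) h(y) \diff y = \int_{-\infty}^{a-A} W^{(q)}(z) h(a-z) \diff z$, and then divide by $W^{(q)}(a-A)$. Using the asymptotic \eqref{scale_function_asymptotic} (equivalently \eqref{W_scaled}–\eqref{scale_function_asymptotic_version}), one has $W^{(q)}(a-z)/W^{(q)}(a-A) \to e^{-\Phi(q)(z - A)} \cdot$ (ratio of the scaled versions) $\to e^{-\Phi(q)(z-A)}$ as $a \to \infty$ for fixed $z$; rewriting the quotient as
\[
\frac{1}{W^{(q)}(a-A)} \int_A^\infty W^{(q)}(a-y) h(y) \diff y = \int_A^\infty \frac{W^{(q)}(a-y)}{W^{(q)}(a-A)} h(y) \diff y,
\]
the integrand converges pointwise to $e^{-\Phi(q)(y-A)} h(y)$ and, after the change of variable $y \mapsto y + A$, the limit is $\int_0^\infty e^{-\Phi(q) y} h(y+A) \diff y$, which is finite by Assumption \ref{assump_h}. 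Combining the two limits gives exactly the claimed formula for $\Gamma_3(x;A)$, provided the left-hand side also converges to $\Gamma_3(x;A) = \E^x[\int_0^{\tau_A} e^{-qt} h(X_t)\diff t]$; this holds since $\tau_a^+ \uparrow \infty$ a.s.\ as $a \uparrow \infty$ (the process drifts to $+\infty$ or oscillates, in either case it is not bounded above) so $\tau_A \wedge \tau_a^+ \uparrow \tau_A$, and monotone/dominated convergence applies once $h$ is split into positive and negative parts with the bound coming from Assumption \ref{assump_h}.

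The main obstacle is the justification of the interchange of limit and integral in the $a \uparrow \infty$ step: the ratio $W^{(q)}(a-y)/W^{(q)}(a-A)$ must be dominated uniformly in $a$ by an integrable (against $|h(y)|$) function on $[A,\infty)$. This is where the scaled scale function $W_{\Phi(q)}$ is the right tool: writing $W^{(q)}(a-y)/W^{(q)}(a-A) = e^{-\Phi(q)(y-A)} W_{\Phi(q)}(a-y)/W_{\Phi(q)}(a-A)$ and using that $W_{\Phi(q)}$ is increasing and bounded (by \eqref{scale_function_asymptotic_version}), the ratio $W_{\Phi(q)}(a-y)/W_{\Phi(q)}(a-A)$ is bounded by a constant for $y \ge A$ (it equals something $\le W_{\Phi(q)}(a-A)/W_{\Phi(q)}(a-A)=1$ when $y \ge A$ since then $a - y \le a - A$), so the integrand is dominated by $\mathrm{const}\cdot e^{-\Phi(q)(y-A)}|h(y)|$, which is integrable on $[A,\infty)$ precisely by Assumption \ref{assump_h}. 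With that domination in hand, dominated convergence closes the argument. Since a complete proof of this lemma already appears in \cite{Egami-Yamazaki-2010-1}, it suffices here to indicate these steps.
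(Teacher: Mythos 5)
Your proposal is correct and is essentially the paper's own argument: integrate $h$ against the potential measure of the process killed at $\tau_A\wedge\tau_a^+$ and let $a\uparrow\infty$ by dominated convergence, using $W^{(q)}(a-y)/W^{(q)}(a-A)=e^{-\Phi(q)(y-A)}W_{\Phi(q)}(a-y)/W_{\Phi(q)}(a-A)\le e^{-\Phi(q)(y-A)}$ together with Assumption \ref{assump_h} (the paper simply cites \cite{Egami-Yamazaki-2010-1} for these details). The only slip is the parenthetical claim that $X$ drifts to $+\infty$ or oscillates — it may drift to $-\infty$ — but $\tau_a^+\uparrow\infty$ a.s.\ holds anyway since paths are finite on compact time intervals, so the conclusion is unaffected.
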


For $\Gamma_2$, we first define, for every $A \in \R$,
\begin{align}
\begin{split}
\rho_{g,A}^{(q)} &:= \int_0^\infty \Pi (\diff u)   \int_0^{u} e^{-\Phi(q) z} (g(z+A-u)-g(A)) \diff z  \\ &\equiv \int_0^\infty \Pi (\diff u)   \int_A^{u+A} e^{-\Phi(q) (y-A)} (g(y-u)-g(A)) \diff y, \\
 \overline{\rho}_{g,A}^{(q)} &:= \int_0^\infty \Pi (\diff u)  \int_0^{u} e^{-\Phi(q) z} |g(z+A-u)-g(A)| \diff z \\ &\equiv\int_0^\infty \Pi (\diff u)   \int_A^{u+A} e^{-\Phi(q) (y-A)} |g(y-u)-g(A)| \diff y.
\end{split} \label{def_rho}
\end{align}
\begin{lemma} \label{lemma_lipschitz}
Fix $A \in \R$.  Suppose
\begin{enumerate}
\item $g$ is $C^2$ in a neighborhood of $A$ and
\item $g$ satisfies 
\begin{align}
\int_1^\infty  \Pi (\diff u)  \max_{A- u \leq \zeta \leq A}|g(\zeta)-g(A)|  < \infty, \label{integrability_of_g} 
\end{align}
\end{enumerate}
then $\overline{\rho}_{g,A}^{(q)} < \infty$.
\end{lemma}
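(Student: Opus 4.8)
The plan is to split the double integral defining $\overline{\rho}_{g,A}^{(q)}$ according to the size of the jump $u$, using assumption (1) to control small jumps and assumption (2) to control large jumps. Recall
\[
\overline{\rho}_{g,A}^{(q)} = \int_0^\infty \Pi(\diff u) \int_0^{u} e^{-\Phi(q) z}\, |g(z+A-u)-g(A)|\, \diff z,
\]
where in the inner integral $z$ ranges over $[0,u]$, so the argument $z+A-u$ ranges over $[A-u, A]$. First I would write $\overline{\rho}_{g,A}^{(q)} = I_{\mathrm{small}} + I_{\mathrm{large}}$ with $I_{\mathrm{small}}$ the integral over $u \in (0,1)$ and $I_{\mathrm{large}}$ the integral over $u \in [1,\infty)$, and bound each piece separately.

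For $I_{\mathrm{large}}$, on $\{u \geq 1\}$ I would simply bound $e^{-\Phi(q)z} \leq 1$ and $|g(z+A-u)-g(A)| \leq \max_{A-u \leq \zeta \leq A} |g(\zeta)-g(A)|$ uniformly in $z \in [0,u]$, so the inner integral is at most $u \cdot \max_{A-u \leq \zeta \leq A}|g(\zeta)-g(A)|$. This does not immediately match the hypothesis \eqref{integrability_of_g}, which has no extra factor of $u$; so the slightly more careful route is to split once more, or to note that on $z \in [0, u-1]$ we gain exponential decay $e^{-\Phi(q)z}$ which kills the linear factor, i.e. $\int_0^{u-1} e^{-\Phi(q)z}\,\diff z \leq 1/\Phi(q)$, while on $z \in [u-1, u]$ the length is $1$. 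Combining, the inner integral is bounded by $\bigl(1/\Phi(q) + 1\bigr)\max_{A-u \leq \zeta \leq A}|g(\zeta)-g(A)|$, and integrating against $\Pi$ over $[1,\infty)$ gives finiteness of $I_{\mathrm{large}}$ directly from \eqref{integrability_of_g}.

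For $I_{\mathrm{small}}$, I would use that $g$ is $C^2$ (hence $C^1$ with locally bounded derivative) in a neighborhood of $A$: choosing $\varepsilon>0$ so that $g$ is $C^1$ on $[A-\varepsilon, A+\varepsilon]$ and $K := \sup_{[A-\varepsilon,A]} |g'| < \infty$, then for $u < \varepsilon$ and $z \in [0,u]$ we have $|g(z+A-u)-g(A)| \leq K\,|u-z| \leq K u$ by the mean value theorem, so the inner integral is at most $K u^2$. Then $I_{\mathrm{small}}$ (restricted to $u<\varepsilon$) is bounded by $K\int_{(0,\varepsilon)} u^2\,\Pi(\diff u)$, which is finite by the L\'evy integrability condition \eqref{integrability_levy_measure}; the remaining sliver $u \in [\varepsilon, 1)$ is handled exactly as in the large-jump case (finite measure $\Pi[\varepsilon,1)$ times a bound that is finite since $g$ is locally bounded). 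Adding the pieces yields $\overline{\rho}_{g,A}^{(q)} < \infty$.

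The only mildly delicate point is the bookkeeping in the large-jump term, namely producing a bound of the form (constant) $\times \max_{A-u\le\zeta\le A}|g(\zeta)-g(A)|$ without an unwanted extra power of $u$; the exponential weight $e^{-\Phi(q)z}$ is exactly what makes this work, and since $q>0$ forces $\Phi(q)>0$ this weight is genuinely decaying. Everything else is routine: the $C^2$ (really $C^1$) hypothesis near $A$ gives the quadratic-in-$u$ bound that pairs with \eqref{integrability_levy_measure} for small jumps, and \eqref{integrability_of_g} is tailor-made for the large jumps.
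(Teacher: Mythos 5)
Your proof is correct and follows essentially the same route as the paper: split the jumps at a finite threshold, use the smoothness of $g$ near $A$ together with \eqref{integrability_levy_measure} to get a quadratic-in-$u$ bound for small jumps, and use \eqref{integrability_of_g} with the decay of $e^{-\Phi(q)z}$ for large jumps. The only cosmetic differences are that the paper splits directly at some $\epsilon<1$ (absorbing your ``sliver'' $[\varepsilon,1)$ into the large-jump estimate via local boundedness) and bounds the inner integral simply by $\int_0^u e^{-\Phi(q)z}\,\diff z \leq 1/\Phi(q)$, which avoids your extra split of the $z$-range at $u-1$.
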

\begin{proof}
See Appendix \ref{proof_lemma_lipschitz}.
\end{proof}
For every $x > A$, we also define
\begin{align*}
\varphi_{g,A}^{(q)}(x) &:=\int_0^\infty \Pi (\diff u)  \int_0^{u \wedge (x-A)} W^{(q)} (x-z-A) (g(z+A-u)-g(A)) \diff z, \\
\overline{\varphi}_{g,A}^{(q)}(x) &:=\int_0^\infty \Pi (\diff u) \int_0^{u \wedge (x-A)} W^{(q)} (x-z-A) |g(z+A-u)-g(A)| \diff z.
\end{align*}
By \eqref{W_scaled} and \eqref{scale_function_asymptotic_version},
\begin{align} \label{bound_rho_by_rho}
\begin{split}
\overline{\varphi}_{g,A}^{(q)}(x) 
&= e^{\Phi(q) (x-A)} \int_0^\infty \Pi (\diff u)  \int_0^{u  \wedge (x-A)} e^{-\Phi(q) z}W_{\Phi(q)} (x-z-A) |g(z+A-u) - g(A) |\diff z  \\ &\leq e^{\Phi(q) (x-A)} \frac {\overline{\rho}_{g,A}^{(q)}} {\psi'(\Phi(q))},
\end{split}
\end{align}
and hence the finiteness of $\overline{\rho}_{g,A}^{(q)}$ also implies that of $\overline{\varphi}_{g,A}^{(q)}(x)$ for any $x > A$.

Using these notations, Lemma \ref{lemma_gamma_3} together with the compensation formula shows the following.

\begin{lemma}\label{lemma_gamma_2}
If (1)-(2) of Lemma \ref{lemma_lipschitz} hold for a given $A \in \R$, then
\begin{align}
\Gamma_2(x;A) &=  W^{(q)} (x-A) \rho_{g,A}^{(q)} - \varphi_{g,A}^{(q)}(x), \quad x > A. \label{gamma_2_split}
\end{align}
\end{lemma}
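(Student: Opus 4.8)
The plan is to turn $\Gamma_2(x;A)$ into an object of the same type as $\Gamma_3(x;A)$ and then invoke Lemma~\ref{lemma_gamma_3}. The starting observation is that, since $X$ has no positive jumps, on $\{X_{\tau_A}<A\}$ the down-crossing of $A$ is caused by a (negative) jump occurring exactly at time $\tau_A$. Write the jumps of $X$ as a Poisson random measure $N(\diff t,\diff u)$ on $(0,\infty)^2$ with intensity $\diff t\,\Pi(\diff u)$, where $u=-\Delta X_t>0$ records the size of the jump at time $t$. Because $X_s>A$ for every $s<\tau_A$, no jump strictly before $\tau_A$ can land at or below $A$, so one has the exact representation
\begin{align*}
\Gamma_2(x;A)=\E^x\left[\int_{(0,\infty)^2} e^{-qt}\,1_{\{t\le\tau_A\}}\,(g(X_{t-}-u)-g(A))\,1_{\{X_{t-}-u<A\}}\,N(\diff t,\diff u)\right],
\end{align*}
the right-hand integrand reducing a.s.\ to the single contribution of the down-crossing jump. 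First I would check that this integrand is predictable in $(t,\omega)$ (the maps $t\mapsto 1_{\{t\le\tau_A\}}$ and $t\mapsto X_{t-}$ are left-continuous and adapted) and that, by Lemma~\ref{lemma_lipschitz}, its absolute value has finite $N$-mean controlled by $\overline{\rho}_{g,A}^{(q)}<\infty$; this licenses applying the compensation formula to a signed integrand.

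Applying the compensation formula, and then replacing $X_{t-}$ by $X_t$ under the resulting $\diff t$-integral (the set of jump times being countable) and using $X_t>A$ on $\{t<\tau_A\}$, I would obtain
\begin{align*}
\Gamma_2(x;A)=\E^x\left[\int_0^{\tau_A} e^{-qt}\,G_A(X_t)\,\diff t\right],\qquad G_A(y):=\int_{(y-A,\infty)}(g(y-u)-g(A))\,\Pi(\diff u),\quad y>A,
\end{align*}
which is precisely $\Gamma_3(x;A)$ with $h$ replaced by $G_A$. A Tonelli computation gives $\int_0^\infty e^{-\Phi(q)y}|G_A(y+A)|\,\diff y\le\overline{\rho}_{g,A}^{(q)}<\infty$, so Lemma~\ref{lemma_gamma_3} applies with $h=G_A$ and yields $\Gamma_2(x;A)=W^{(q)}(x-A)\int_0^\infty e^{-\Phi(q)y}G_A(y+A)\,\diff y-\int_A^x W^{(q)}(x-y)G_A(y)\,\diff y$. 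It then remains to identify the two integrals: a Fubini--Tonelli interchange of the $\diff y$- and $\Pi(\diff u)$-integrals turns the first into $\rho_{g,A}^{(q)}$ by \eqref{def_rho}, and the same interchange in the second, followed by the substitution $z=y-A$, turns it into $\varphi_{g,A}^{(q)}(x)$, the truncation $z\le u\wedge(x-A)$ coming from the domain $\{A\le y\le x,\ y-A<u\}$. This is \eqref{gamma_2_split}.

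The main obstacle is the very first step: making the jump-sum representation of $\Gamma_2$ genuinely \emph{exact} and then legitimately passing the expectation through the compensator. One has to argue carefully that only the down-crossing jump at $\tau_A$ contributes, that the a.s.\ negligible scenario in which the path creeps continuously down to $A$ does not spoil the identity, and that the predictable (and not necessarily nonnegative) integrand is $N$-integrable so that the compensation formula may be used in the asserted form. All of this rests on the standing integrability hypotheses --- in particular $\overline{\rho}_{g,A}^{(q)}<\infty$, hence $\overline{\varphi}_{g,A}^{(q)}(x)<\infty$ by \eqref{bound_rho_by_rho}, together with Assumption~\ref{assump_h} --- while the remaining manipulations are routine applications of Fubini--Tonelli and dominated convergence.
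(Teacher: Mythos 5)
Your proposal is correct and follows essentially the same route as the paper: represent $\Gamma_2$ through the Poisson random measure of the jumps, apply the compensation formula, reduce to Lemma \ref{lemma_gamma_3}, and identify the resulting integrals with $\rho_{g,A}^{(q)}$ and $\varphi_{g,A}^{(q)}(x)$ by Fubini--Tonelli, all under the integrability supplied by Lemma \ref{lemma_lipschitz} and \eqref{bound_rho_by_rho}. The only organizational difference is that the paper splits the payoff into its positive and negative parts and applies Lemma \ref{lemma_gamma_3} for each fixed jump size $u$ before integrating against $\Pi(\diff u)$, which is precisely the justification needed for your use of the compensation formula with a signed integrand and for invoking Lemma \ref{lemma_gamma_3} with the aggregated function $G_A$ (which need not be locally bounded near $A$ in the unbounded variation case).
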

\begin{proof}
Let  $N(\cdot, \cdot)$ be the Poisson random measure associated with the jumps of
$-X$ and $\underline{X}_t := \min_{0 \leq s \leq t} X_s$ for all $t
\geq 0$.  We also let $x_\pm = \max (\pm x, 0)$ for any $x \in \R$. By the compensation formula (see, e.g.,
\cite{Kyprianou_2006}), 
\begin{align*}
&\E^x \left[ e^{-q \tau_A} (g(X_{\tau_A})-g(A))_+ 1_{\{X_{\tau_A} < A, \, \tau_A < \infty \}}\right] \\
&= \E^x \left[ \int_0^\infty  \int_0^\infty N(\diff t, \diff u) e^{ -q t} (g(X_{t-}-u)-g(A))_+1_{\{X_{t-} - u \leq A, \;  \underline{X}_{t-} > A \}}\right] \\
&= \E^x \left[ \int_0^\infty  e^{ -q t} \diff t \int_0^\infty \Pi (\diff u) (g(X_{t-}-u)-g(A))_+ 1_{\{X_{t-} - u \leq A, \;  \underline{X}_{t-} > A \}}\right] \\
&=  \int_0^\infty \Pi (\diff u) \E^x \left[  \int_0^\infty  e^{ -q t}   (g(X_{t-}-u)-g(A))_+1_{\{X_{t-} - u \leq A, \;  \underline{X}_{t-} > A \}} \diff t \right] \\
&=  \int_0^\infty \Pi (\diff u) \E^x \left[  \int_0^{\tau_A}  e^{ -q t}   (g(X_{t-}-u)-g(A))_+1_{\{X_{t-} \leq A + u\}} \diff t \right].
\end{align*}
By setting $h(y) \equiv (g(y-u) - g(A))_+1_{\{y \leq A+u\}}$ or equivalently $h(y+A) \equiv (g(y+A -
u)-g(A))_+1_{\{y \leq u\}}$ in Lemma \ref{lemma_gamma_3}, 
\begin{align*}
&\E^x \left[  \int_0^{\tau_A}  e^{ -q t}   (g(X_{t-}-u)-g(A))_+ 1_{\{X_{t-} \leq A + u\}} \diff t \right] \\ &= W^{(q)} (x-A) \int_0^u e^{- \Phi(q) y}   (g(y+A-u) - g(A))_+\diff y -
\int_A^x W^{(q)} (x-y) (g(y-u)-g(A))_+ 1_{\{y \leq A+u\}} \diff y \\
&= W^{(q)} (x-A) \int_0^u e^{- \Phi(q) y}   (g(y+A-u)-g(A))_+\diff y -
\int_0^{u \wedge (x-A)} W^{(q)} (x-z-A) (g(z+A-u)-g(A))_+ \diff z.
\end{align*}
By substituting this, we have 
\begin{align*}
&\E^x \left[ e^{-q \tau_A} (g(X_{\tau_A})-g(A))_+ 1_{\{X_{\tau_A} < A, \, \tau_A < \infty \}}\right] \\ &=   \int_0^\infty \Pi (\diff u) \left[  W^{(q)} (x-A) \int_0^u e^{- \Phi(q) y}   (g(y+A-u)-g(A))_+ \diff y \right. \\ &\qquad \left.
- \int_0^{u \wedge (x-A)} W^{(q)} (x-z-A) (g(z+A-u)-g(A))_+ \diff z \right],
\end{align*}
which is finite by Lemma \ref{lemma_lipschitz} and  \eqref{bound_rho_by_rho}.  Similarly, we can obtain $\E^x \left[ e^{-q \tau_A} (g(X_{\tau_A})-g(A))_- 1_{\{X_{\tau_A} < A, \, \tau_A < \infty \}}\right]$ and \eqref{gamma_2_split} is immediate by taking the difference.
\end{proof}

In view of \eqref{gamma_2_split} above, we can also write
\begin{align}
\begin{split}
W^{(q)} (x-A) \rho_{g,A}^{(q)}  &=   W_{\Phi(q)} (x-A) e^{\Phi(q) x}\int_0^\infty \Pi (\diff u)\int_A^{u+A} e^{- \Phi(q) y}   (g(y-u)-g(A)) \diff y,  \\ \varphi_{g,A}^{(q)}(x) &= \int_0^\infty \Pi (\diff u)  \int_A^{(u+A) \wedge x} W^{(q)} (x-z) (g(z-u)-g(A)) \diff z.
\end{split} \label{gamma_2_rewrite}
\end{align}

%
%
%
%

\section{First-Order Condition and Continuous and Smooth fit}\label{sec:fit}
The most common way of choosing the candidate threshold level is via the continuous and smooth fit principle.  Define
\begin{align*}
u_A(A+) := \lim_{x \downarrow A}u_A(x) \quad \textrm{and} \quad u_A'(A+) := \lim_{x \downarrow A}u_A'(x), \quad A \in \R, 
\end{align*}
if these limits exist.  The continuous and smooth fit chooses $A$ such that $u_A(A+) = g(A)$ and $u_A'(A+) = g'(A)$, respectively.  Alternatively, one can differentiate $u_A$ with respect to $A$ and obtain the first-order condition.
%

In this section, we pursue the candidate threshold level $A^*$ in both ways. We first obtain, for a general case, the first-derivative $\partial u_A(x) / \partial A$ and $A$ that makes it vanish, and then the continuous fit condition 
for the case $X$ is of bounded variation and the smooth fit condition for the case $X$ has a diffusion component ($\sigma > 0$).  We further discuss the equivalence of these conditions and how to obtain optimal strategies.

\subsection{First-order condition}
We shall obtain $\partial u_A(x)/\partial A$ for $x > A$.
Let
\begin{align*}
\Psi(A) :=    -\frac q {\Phi(q)} g(A) + \rho_{g,A}^{(q)}  + \int_0^\infty e^{-\Phi(q) y} h(y+A)\diff y,
\quad A \in \R.
\end{align*}

\begin{proposition}[Derivative of $u_A$ with respect to $A$]
 \label{lemma_derivative_A}
For given $x > A$, suppose (1)-(2) of Lemma \ref{lemma_lipschitz} hold and
\begin{align}
&\int_1^\infty \Pi(\diff u) \max_{0 \leq \xi \leq \delta} |g(A+\xi) - g(A+\xi-u)| < \infty, \label{integrability_sup}
\end{align}
for some $\delta > 0$. Then, we have
\begin{align*}
\frac \partial {\partial A} u_A(x) = -\Theta^{(q)}(x-A)  \Big( \Psi(A) - \frac {\sigma^2} 2 g'(A) \Big),
\end{align*}
where
\begin{align*}
\Theta^{(q)}(y) := e^{\Phi(q) y} W_{\Phi(q)}' (y), \quad y > 0.
\end{align*}

\end{proposition}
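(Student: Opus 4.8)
The plan is to differentiate the decomposition $u_A(x)=\Gamma_1(x;A)+\Gamma_2(x;A)+\Gamma_3(x;A)$ term by term, using the closed forms of Lemmas~\ref{lemma_gamma_1}, \ref{lemma_gamma_3} and~\ref{lemma_gamma_2}, and then to collapse every surviving scale-function expression onto $\Theta^{(q)}$ via the elementary identities $W^{(q)'}(y)=\Phi(q)W^{(q)}(y)+\Theta^{(q)}(y)$ (immediate from \eqref{W_scaled}) and $Z^{(q)'}(y)=qW^{(q)}(y)$, $y>0$. For $\Gamma_1$ this is a one-line differentiation, yielding $\partial_A\Gamma_1(x;A)=g'(A)\big[Z^{(q)}(x-A)-\tfrac{q}{\Phi(q)}W^{(q)}(x-A)\big]+\tfrac{q}{\Phi(q)}g(A)\Theta^{(q)}(x-A)$. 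For $\Gamma_3$ I would write $\int_0^\infty e^{-\Phi(q)y}h(y+A)\diff y=e^{\Phi(q)A}\int_A^\infty e^{-\Phi(q)w}h(w)\diff w$; on differentiating, the boundary term $-W^{(q)}(x-A)h(A)$ generated by the $A$-dependent lower limit cancels exactly the one produced by $\partial_A\int_A^xW^{(q)}(x-y)h(y)\diff y$, so that
\begin{align*}
\partial_A\Gamma_3(x;A)=-\Theta^{(q)}(x-A)\int_0^\infty e^{-\Phi(q)y}h(y+A)\diff y .
\end{align*}

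The term $\Gamma_2$ is the delicate one. Since $g$ is only assumed $C^2$ near $A$, I would differentiate the representations in \eqref{gamma_2_rewrite} rather than those in \eqref{def_rho}, so that $g$ is never differentiated away from $A$; the only $A$-dependence that must be differentiated then lives in the factor $g(A)$, in $W^{(q)}(x-A)$ (equivalently $W_{\Phi(q)}(x-A)$), or in the limits of integration. Leibniz's rule applies, the upper limits of the $z$-integrals contributing nothing because the integrand vanishes there ($g(z-u)-g(A)=0$ at $z=u+A$); and differentiation under the $\Pi(\diff u)$-integral is justified by dominated convergence, with a dominating function furnished by \eqref{integrability_of_g}, \eqref{integrability_sup} and the $C^2$ behaviour of $g$ near $A$, which forces a first-order cancellation making the differentiated integrand $O(u^2)$ for small $u$, hence $\Pi$-integrable by \eqref{integrability_levy_measure}. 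The outcome is
\begin{align*}
\partial_A\Gamma_2(x;A)=-\Theta^{(q)}(x-A)\,\rho_{g,A}^{(q)}+g'(A)\,C(x-A),
\end{align*}
where
\begin{align*}
C(y):=\int_0^\infty\Pi(\diff u)\Big[\tfrac1q\big(Z^{(q)}(y)-Z^{(q)}(y-u)\big)-\tfrac{W^{(q)}(y)}{\Phi(q)}\big(1-e^{-\Phi(q)u}\big)\Big].
\end{align*}

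Adding the three contributions, the coefficient of $\Theta^{(q)}(x-A)$ collapses to $-\Psi(A)$, leaving
\begin{align*}
\partial_Au_A(x)=-\Theta^{(q)}(x-A)\,\Psi(A)+g'(A)\Big[Z^{(q)}(x-A)-\tfrac{q}{\Phi(q)}W^{(q)}(x-A)+C(x-A)\Big],
\end{align*}
so it remains to prove the scale-function identity
\begin{align*}
Z^{(q)}(y)-\tfrac{q}{\Phi(q)}W^{(q)}(y)+C(y)=\tfrac{\sigma^2}{2}\,\Theta^{(q)}(y),\qquad y>0 .
\end{align*}
I would verify this by Laplace transforms in $y$: using $\int_0^\infty e^{-\beta y}W^{(q)}(y)\diff y=(\psi(\beta)-q)^{-1}$, the transforms of $Z^{(q)}$ and (by integration by parts) of $W^{(q)'}$, and the evaluation $\int_0^\infty\Pi(\diff u)\big[\tfrac{1-e^{-\beta u}}{\beta}-\tfrac{1-e^{-\Phi(q)u}}{\Phi(q)}\big]=-\tfrac{\psi(\beta)}{\beta}+\tfrac{\sigma^2}{2}(\beta-\Phi(q))+\tfrac{q}{\Phi(q)}$ — which follows from \eqref{laplace_exp} after the $\Pi$-non-integrable $u\,1_{\{u<1\}}$ terms cancel in the difference, together with $\psi(\Phi(q))=q$ — one finds that both sides have Laplace transform $\tfrac{\sigma^2}{2}\cdot\tfrac{\beta-\Phi(q)}{\psi(\beta)-q}$. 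The only apparent mismatch, a term $\tfrac{\sigma^2}{2}W^{(q)}(0)$ coming from the integration by parts, vanishes: if $\sigma>0$ then $W^{(q)}(0)=0$ by \eqref{at_zero}, and if $\sigma=0$ the coefficient is already zero. Continuity of both sides on $(0,\infty)$ and uniqueness of Laplace transforms then close the identity, and with it the proposition.

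The hardest part will be twofold. First, one must organize the bookkeeping so that the unwieldy contributions of $g$ at points other than $A$, and of $h(A)$, cancel out; this is exactly what forces the use of \eqref{gamma_2_rewrite} and the careful Leibniz accounting, and it is where the extra hypothesis \eqref{integrability_sup} is consumed, as it dominates the difference quotients needed to differentiate under the $\Pi$-integral. Second, one must recognize the residual scale-function identity and verify it; the transform computation is routine once set up, but noticing that the leftover $g'(A)$-coefficient is precisely $\tfrac{\sigma^2}{2}\Theta^{(q)}(x-A)$ — and that the stray $W^{(q)}(0)$ term is always annihilated, uniformly across the bounded- and unbounded-variation cases — is the real point.
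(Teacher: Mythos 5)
Your proposal is correct, and up to the very last step it follows the paper's own route: you differentiate $\Gamma_1,\Gamma_2,\Gamma_3$ term by term, you differentiate exactly the representations \eqref{gamma_2_rewrite} (as the paper does in Lemma \ref{lemma_derivative_A_individual} and Lemma \ref{derivative_A_gamma_2}), you rely on the same $O(u^2)$ first-order cancellation and the same hypotheses \eqref{integrability_of_g}, \eqref{integrability_sup}, \eqref{integrability_levy_measure} to dominate the difference quotients, and your $C(x-A)$ is precisely the $\Pi$-integral appearing in the paper's $Q(x;A)$ once one writes $\int_A^{(u+A)\wedge x}W^{(q)}(x-z)\,\diff z=\tfrac1q\big(Z^{(q)}(x-A)-Z^{(q)}(x-A-u)\big)$. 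The genuine divergence is how the residual $g'(A)$-coefficient $Q(x;A)=Z^{(q)}(x-A)-\tfrac{q}{\Phi(q)}W^{(q)}(x-A)+C(x-A)$ is identified: the paper recognizes it probabilistically as $\E^x\big[e^{-q\tau_A}1_{\{X_{\tau_A}=A,\,\tau_A<\infty\}}\big]$, i.e.\ the creeping term, and invokes the facts that a spectrally negative process creeps downward iff $\sigma>0$ and that this quantity equals $\tfrac{\sigma^2}{2}\big(W^{(q)'}(x-A)-\Phi(q)W^{(q)}(x-A)\big)$ as in \cite{Biffis_Kyprianou_2010,Pistorius_2005}; you instead prove the equivalent scale-function identity analytically by Laplace transforms, using $\psi(\Phi(q))=q$ and \eqref{at_zero} to kill the stray $W^{(q)}(0)$ term. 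Your transform computation is right (both sides transform to $\tfrac{\sigma^2}{2}\,\tfrac{\beta-\Phi(q)}{\psi(\beta)-q}$ for $\beta>\Phi(q)$), so your argument is self-contained where the paper leans on cited creeping/resolvent results, at the cost of being less transparent about why $\sigma^2/2$ appears.

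Two small caveats. For $\Gamma_3$, the two $h(A)$ boundary terms you let cancel do not exist individually unless $A$ is a Lebesgue point of $h$ ($h$ is only locally bounded and measurable here); the cancellation should be carried out at the level of difference quotients, as in the cited Lemma 4.4 of \cite{Egami-Yamazaki-2010-1}, after which your formula stands. And the dominated-convergence bounds justifying $\partial_A\Gamma_2$, which you assert, are exactly the content of the paper's Appendix \ref{proof_lemma_derivative_A_individual}; you name the correct dominating ingredients, but those estimates still have to be written out to make the step complete.
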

Because $W_{\Phi(q)}$ is increasing, $\Theta^{(q)}$ is positive (see also \cite{Kyprianou_Surya_2007} for an interpretation of $\Theta^{(q)}$ as the resolvent measure of the ascending ladder height process of $X$) and hence
\begin{align}
\Psi(A) - \frac {\sigma^2} 2 g'(A) \leq (\geq) 0 \Longrightarrow \frac \partial {\partial A} u_A(x)  \geq (\leq) 0 \quad  \forall x > A. \label{equivalence_derivative_Phi}
\end{align}
If there exists $A^*$ such that 
\begin{align}
\Psi(A^*) - \frac {\sigma^2} 2 g'(A^*) = 0, \label{first_order_cond}
\end{align}
then the stopping time $\tau_{A^*}$ naturally becomes a reasonable candidate for the optimal stopping time.


In order to show Proposition \ref{lemma_derivative_A} above, we obtain the derivatives of $\Gamma_i$ for $1 \leq i \leq 3$ with respect to $A$ for any $x > A$.
By applying straightforward differentiation in Lemma \ref{lemma_gamma_1} and because $W^{(q)'}(x) = \Phi(q) W_{\Phi(q)} (x) + \Theta^{(q)}(x)$,
\begin{align}
\begin{split}
\frac \partial {\partial A}\Gamma_1(x;A) &=  g'(A) \Big[ Z^{(q)}(x-A) - \frac q {\Phi(q)} W^{(q)}(x-A) \Big] + g(A)  \frac q {\Phi(q)} \Theta^{(q)}(x-A).
\end{split} \label{derivative_gamma_1_A}
\end{align}
For $\Gamma_2$, we first take the derivatives of \eqref{gamma_2_rewrite} with respect to $A$.
%
%
%


\begin{lemma} \label{lemma_derivative_A_individual}
Fix $x > A$.  Under the assumptions in  Proposition \ref{lemma_derivative_A},
\begin{multline} \label{derivative_A_individual_1}
\frac \partial {\partial A} \int_0^\infty \Pi (\diff u)\int_A^{u+A} e^{- \Phi(q) y}   (g(y-u)-g(A)) \diff y 
\\  =  e^{-\Phi(q) A} \int_0^\infty \Pi (\diff u) \Big[ g(A)-g(A-u)  - \frac {1 - e^{-\Phi(q) u}} {\Phi(q)} g'(A) \Big],
\end{multline}
and
\begin{align}
\frac \partial {\partial A}  \varphi_{g,A}^{(q)}(x)  =  \int_0^\infty \Pi (\diff u) \Big[  W^{(q)}(x-A) (g(A) - g(A-u)) - g'(A) \int_A^{(u+A) \wedge x}W^{(q)}(x-z) \diff z\Big].  \label{derivative_A_individual_2}
\end{align}
\end{lemma}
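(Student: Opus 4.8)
The plan is to prove the two identities in Lemma \ref{lemma_derivative_A_individual} by differentiating under the integral sign, with the integrability conditions (1)--(2) of Lemma \ref{lemma_lipschitz} and \eqref{integrability_sup} justifying the interchange. For \eqref{derivative_A_individual_1}, I would first rewrite the inner integral by the substitution $y = z + A$, turning the expression into
\[
\int_0^\infty \Pi(\diff u) \, e^{-\Phi(q) A} \int_0^u e^{-\Phi(q) z} \big( g(z+A-u) - g(A) \big) \diff z,
\]
which is exactly $e^{-\Phi(q)A}\rho_{g,A}^{(q)}$ but now with the $A$-dependence displayed both in the prefactor $e^{-\Phi(q)A}$ and inside $g(z+A-u)-g(A)$; the limits $0$ and $u$ no longer depend on $A$, which is the point of the substitution. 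Differentiating in $A$ produces two pieces: the prefactor contributes $-\Phi(q) e^{-\Phi(q)A}\rho_{g,A}^{(q)}$, and differentiating inside gives $e^{-\Phi(q)A}\int_0^\infty\Pi(\diff u)\int_0^u e^{-\Phi(q)z}(g'(z+A-u) - g'(A))\diff z$. In the second piece I would integrate the term $e^{-\Phi(q)z} g'(z+A-u)$ by parts in $z$ (noting $\partial_z g(z+A-u) = g'(z+A-u)$), picking up boundary terms at $z=0$ and $z=u$ together with $\Phi(q)\int_0^u e^{-\Phi(q)z} g(z+A-u)\diff z$; the latter cancels against part of the prefactor term, and after collecting the boundary terms (using $g(u+A-u)=g(A)$ at the upper limit) and the $-g'(A)\int_0^u e^{-\Phi(q)z}\diff z = -g'(A)\frac{1-e^{-\Phi(q)u}}{\Phi(q)}$ contribution, one arrives at the right-hand side of \eqref{derivative_A_individual_1}.

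For \eqref{derivative_A_individual_2}, the starting point is the second line of \eqref{gamma_2_rewrite}, namely $\varphi_{g,A}^{(q)}(x) = \int_0^\infty \Pi(\diff u)\int_A^{(u+A)\wedge x} W^{(q)}(x-z)(g(z-u)-g(A))\diff z$. Here $A$ appears in the lower limit of the inner integral, in the upper limit $(u+A)\wedge x$, and in the integrand through $g(A)$. I would differentiate by the Leibniz rule, treating the cases $u+A \le x$ and $u+A > x$ (the kink at $u + A = x$ contributes nothing since the two one-sided limits of the upper-limit derivative agree with the vanishing of $W^{(q)}(0)$ weighted against a bounded integrand, or more simply because $\{u : u+A = x\}$ is Lebesgue-null and $\Pi$ has no atoms by Assumption 2.1). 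Differentiating the lower limit $A$ gives $+W^{(q)}(x-A)(g(A-u)-g(A))$ (note the sign flip from differentiating a lower limit), differentiating the upper limit gives, when $u+A<x$, the term $W^{(q)}(x-(u+A))(g(A) - g(A)) = 0$ since at $z = u+A$ the integrand $g(z-u)-g(A) = g(A)-g(A)=0$, and differentiating the integrand in $A$ gives $-g'(A)\int_A^{(u+A)\wedge x}W^{(q)}(x-z)\diff z$. Summing over $\Pi(\diff u)$ yields \eqref{derivative_A_individual_2} after reversing the sign of the lower-limit term to match the stated $W^{(q)}(x-A)(g(A)-g(A-u))$.

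The main obstacle is the rigorous justification of differentiation under the $\Pi(\diff u)$ integral, i.e.\ producing an integrable-in-$u$ dominating function for the difference quotients uniformly over $A$ in a neighborhood of the fixed value. For the small-jump part $u \le 1$ this follows from the local $C^2$ assumption on $g$ near $A$, since $|g(z+A-u)-g(A)|$ and $|g'(z+A-u)-g'(A)|$ are then $O(u)$ or $O(z)$ and the measure integrates $u^2 \wedge 1$; for the large-jump part $u > 1$ the conditions \eqref{integrability_of_g} and especially \eqref{integrability_sup} are tailored exactly to dominate the difference quotients of $g(A+\xi)-g(A+\xi-u)$ uniformly for $\xi$ in a small interval $[0,\delta]$, which is what a mean-value-theorem bound on the difference quotient requires. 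I would state this domination as a short lemma-style paragraph (or cite the analogous argument in the proof of Lemma \ref{lemma_lipschitz} in the appendix), and then the two displayed identities are routine calculus. A secondary, minor point to get right is bookkeeping of the boundary term at $z=u$ in the integration by parts for \eqref{derivative_A_individual_1}: one must check it vanishes, which it does because $g(z+A-u)-g(A)\big|_{z=u} = 0$ kills the $e^{-\Phi(q)u}$ boundary contribution coming from the $g(A)$-subtracted form, so only the $z=0$ boundary term $-(g(A-u)-g(A)) = g(A)-g(A-u)$ survives.
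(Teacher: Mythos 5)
Your overall strategy (differentiate under the $\Pi(\diff u)$-integral, justify the interchange by mean-value/Taylor bounds on the difference quotients, small jumps via $\int(1\wedge u^2)\Pi(\diff u)<\infty$ and the local $C^2$ property, large jumps via \eqref{integrability_of_g} and \eqref{integrability_sup}, then dominated convergence) is exactly the paper's, and your Leibniz computation for \eqref{derivative_A_individual_2} coincides with the paper's pointwise derivative of $\widetilde q(z;u,x):=\int_z^{(u+z)\wedge x}W^{(q)}(x-y)[g(y-u)-g(z)]\diff y$. However, your route to \eqref{derivative_A_individual_1} contains a genuine gap: after substituting $y=z+A$ you differentiate \emph{inside} the $z$-integral, producing $g'(z+A-u)$ for $z\in(0,u)$, and then integrate by parts in $z$. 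Both steps require $g$ to be differentiable (indeed absolutely continuous) on all of $[A-u,A]$ for $\Pi$-a.e.\ $u$, but the hypotheses only give $g\in C^2$ in a neighbourhood of $A$; elsewhere $g$ is merely locally bounded and measurable, so $g'(z+A-u)$ need not exist and the boundary-term bookkeeping has no meaning. The paper avoids this precisely by \emph{not} removing the $A$-dependence from the limits: it differentiates $q(z;u)=\int_z^{u+z}e^{-\Phi(q)y}[g(y-u)-g(z)]\diff y$ by the Leibniz rule, whose pointwise derivative $e^{-\Phi(q)z}\bigl(g(z)-g(z-u)-\tfrac{1-e^{-\Phi(q)u}}{\Phi(q)}g'(z)\bigr)$ only evaluates $g$ and $g'$ at points $z\in(A,A+c)$ near $A$ (plus values, not derivatives, of $g$ at $z-u$). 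Your fix is simply to do the same: keep the limits $A$ and $u+A$, so the first-order condition on $g$ is needed only locally.

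A secondary point: your claim that, once domination is in place, the identities are ``routine calculus'' understates the work needed for \eqref{derivative_A_individual_2}. For $u$ larger than a fixed $\varepsilon$, the map $z\mapsto\widetilde q(z;u,x)$ has a kink where $u+z=x$, so the mean value theorem cannot be applied across it and \eqref{integrability_sup} alone does not dominate the quotient; the paper instead bounds $|\widetilde q(A+c;u,x)-\widetilde q(A;u,x)|/c$ directly by the two boundary-strip terms ($B_1$) and the $|g(A+c)-g(A)|/c$ term ($B_2$), using $\max_{A-u\le z\le A+c}|g(z)-g(A)|\le\max_{A-u\le z\le A}|g(z)-g(A)|+\max_{0\le\zeta\le\delta}|g(A+\zeta)-g(A+\zeta-u)|$ together with the exponential bounds on $W^{(q)}$ coming from \eqref{W_scaled}--\eqref{scale_function_asymptotic_version}. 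Noting that $\Pi$ has no atoms handles only the pointwise limit at the single value $u=x-A$, not the uniform-in-$c$ domination. These estimates should be spelled out (or the quotient bounded directly as in the paper) rather than delegated to ``the analogous argument'' in the proof of Lemma \ref{lemma_lipschitz}, which establishes $\overline\rho^{(q)}_{g,A}<\infty$ and not a bound on difference quotients.
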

\begin{proof}
See Appendix \ref{proof_lemma_derivative_A_individual}.
\end{proof}

By applying Lemma \ref{lemma_derivative_A_individual} in \eqref{gamma_2_split}-\eqref{gamma_2_rewrite}, the derivative of $\Gamma_2$ with respect to $A$ is immediately obtained.
\begin{lemma}  \label{derivative_A_gamma_2}
Fix $x > A$.  Under the assumptions in  Proposition \ref{lemma_derivative_A},
\begin{multline*}
\frac \partial {\partial A}\Gamma_2(x;A)  =   -W_{\Phi(q)}' (x-A) e^{\Phi(q) x}\int_0^\infty \Pi (\diff u)\int_A^{u+A} e^{- \Phi(q) y}   (g(y-u)-g(A)) \diff y \\
+ g'(A) \int_0^\infty \Pi (\diff u) \Big( \int_A^{(u+A) \wedge x}W^{(q)}(x-z) \diff z -   \frac { 1 - e^{-\Phi(q) u}} {\Phi(q)} W^{(q)} (x-A) \Big).
\end{multline*}
\end{lemma}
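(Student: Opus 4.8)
The plan is to differentiate the decomposition $\Gamma_2(x;A) = W^{(q)}(x-A)\rho_{g,A}^{(q)} - \varphi_{g,A}^{(q)}(x)$ from Lemma \ref{lemma_gamma_2}, using the rewritten form \eqref{gamma_2_rewrite} for the first summand. Writing $I(A) := \int_0^\infty \Pi(\diff u)\int_A^{u+A} e^{-\Phi(q)y}(g(y-u)-g(A))\diff y$, so that $W^{(q)}(x-A)\rho_{g,A}^{(q)} = W_{\Phi(q)}(x-A)\, e^{\Phi(q)x} I(A)$, I would apply the product rule in the variable $A$ with $x$ held fixed, obtaining $\partial_A[W_{\Phi(q)}(x-A) e^{\Phi(q)x} I(A)] = -W_{\Phi(q)}'(x-A) e^{\Phi(q)x} I(A) + W_{\Phi(q)}(x-A) e^{\Phi(q)x} I'(A)$.

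Next I would substitute the two identities from Lemma \ref{lemma_derivative_A_individual}: \eqref{derivative_A_individual_1} supplies $I'(A)$ and \eqref{derivative_A_individual_2} supplies $\partial_A \varphi_{g,A}^{(q)}(x)$. The key simplification is the scaling relation $W_{\Phi(q)}(x-A) e^{\Phi(q)(x-A)} = W^{(q)}(x-A)$ from \eqref{W_scaled}, which converts the prefactor $W_{\Phi(q)}(x-A) e^{\Phi(q)x} e^{-\Phi(q)A}$ sitting in front of $I'(A)$ into $W^{(q)}(x-A)$. After this substitution, the two occurrences of $W^{(q)}(x-A)\int_0^\infty \Pi(\diff u)\,(g(A)-g(A-u))$ — one from $W_{\Phi(q)}(x-A) e^{\Phi(q)x} I'(A)$ and one from $-\partial_A\varphi_{g,A}^{(q)}(x)$ — cancel exactly. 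Collecting the surviving pieces, namely $-W_{\Phi(q)}'(x-A) e^{\Phi(q)x} I(A)$ together with the two terms proportional to $g'(A)$ (the $-W^{(q)}(x-A)\int_0^\infty \Pi(\diff u)\,\tfrac{1-e^{-\Phi(q)u}}{\Phi(q)}$ term and the $\int_0^\infty \Pi(\diff u)\int_A^{(u+A)\wedge x}W^{(q)}(x-z)\diff z$ term), reproduces the stated formula verbatim.

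The hypotheses (1)--(2) of Lemma \ref{lemma_lipschitz} and the extra condition \eqref{integrability_sup} are needed only so that Lemma \ref{lemma_derivative_A_individual} is applicable and all the $\Pi(\diff u)$-integrals appearing in the rearrangement converge absolutely; in particular \eqref{bound_rho_by_rho} guarantees that $\rho_{g,A}^{(q)}$ and $\varphi_{g,A}^{(q)}(x)$ are finite, so splitting $\partial_A\Gamma_2$ into the two independently-differentiated pieces is legitimate. Consequently there is no genuine analytic obstacle here — every delicate interchange of differentiation and integration is already packaged inside Lemma \ref{lemma_derivative_A_individual} — and the only point requiring care is the algebraic bookkeeping: tracking which terms cancel and keeping the $e^{\pm\Phi(q)A}$ and $e^{\pm\Phi(q)x}$ factors aligned when invoking \eqref{W_scaled}.
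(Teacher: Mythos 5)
Your proposal is correct and follows essentially the same route as the paper, whose proof is precisely the one-line application of Lemma \ref{lemma_derivative_A_individual} to the decomposition \eqref{gamma_2_split} rewritten as in \eqref{gamma_2_rewrite}, using \eqref{W_scaled} to turn $W_{\Phi(q)}(x-A)e^{\Phi(q)(x-A)}$ into $W^{(q)}(x-A)$. One small caution: your claim that every $\Pi(\diff u)$-integral in the rearrangement converges absolutely is not quite right in the unbounded-variation case (e.g.\ $\int_0^1 u\,\Pi(\diff u)=\infty$ makes $\int_0^\infty \Pi(\diff u)\,(g(A)-g(A-u))$ possibly divergent), so the cancellation of the $g(A)-g(A-u)$ terms should be performed inside a single $\Pi$-integral at the level of integrands — after which the surviving integrand is $O(u^2)$ near zero and the stated formula follows exactly as you describe.
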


For $\Gamma_3$, as in the proof of Lemma 4.4 of \cite{Egami-Yamazaki-2010-1}, we have the following.  Although the continuity of $h$ is assumed throughout in \cite{Egami-Yamazaki-2010-1}, it is not required in the following lemma; this is clear from the proof of Lemma 4.4 of \cite{Egami-Yamazaki-2010-1}.


\begin{lemma}  \label{derivative_A_gamma_3}
For every $x > A$,
\begin{align*} \frac \partial
{\partial A} {\Gamma_3(x;A)} = - \Theta^{(q)}(x-A) 
\int_0^\infty e^{-\Phi(q) y} h(y+A) \diff y.
\end{align*}
\end{lemma}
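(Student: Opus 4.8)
The plan is to differentiate the closed-form expression for $\Gamma_3$ provided by Lemma~\ref{lemma_gamma_3} directly with respect to $A$. Set $F(A) := \int_0^\infty e^{-\Phi(q) y} h(y+A)\diff y$, which is finite for every $A$ by Assumption~\ref{assump_h}; the substitution $z = y+A$ gives the equivalent form $F(A) = e^{\Phi(q) A}\int_A^\infty e^{-\Phi(q) z} h(z)\diff z$. With this notation Lemma~\ref{lemma_gamma_3} reads
\[
\Gamma_3(x;A) = W^{(q)}(x-A)\,F(A) - \int_A^x W^{(q)}(x-y)\,h(y)\diff y, \qquad x > A,
\]
so it suffices to differentiate these two terms in $A$. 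First I would record that $F'(A) = \Phi(q) F(A) - h(A)$: this follows from the change-of-variables form of $F$, since $z \mapsto e^{-\Phi(q) z} h(z)$ is locally bounded (because $h$ is) and integrable on $[A,\infty)$, so $A \mapsto \int_A^\infty e^{-\Phi(q) z} h(z)\diff z$ is absolutely continuous with a.e.\ derivative $-e^{-\Phi(q) A} h(A)$. For the second term the integrand does not depend on $A$, so its $A$-derivative is $-W^{(q)}(x-A) h(A)$ by the Leibniz rule.

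Applying the product rule to $W^{(q)}(x-A) F(A)$ --- legitimate since $W^{(q)}$ is $C^1$ on $(0,\infty)$ under the standing no-atom assumption, and $x-A>0$ --- and adding the two contributions, the two occurrences of $W^{(q)}(x-A) h(A)$ cancel and one obtains
\[
\frac{\partial}{\partial A}\Gamma_3(x;A) = -\,W^{(q)'}(x-A)\,F(A) + \Phi(q)\,W^{(q)}(x-A)\,F(A) = -F(A)\,\bigl(W^{(q)'}(x-A) - \Phi(q)\,W^{(q)}(x-A)\bigr).
\]
To identify the bracket I would differentiate the relation $W^{(q)}(y) = e^{\Phi(q) y} W_{\Phi(q)}(y)$ from \eqref{W_scaled}, which yields $W^{(q)'}(y) = \Phi(q)\,W^{(q)}(y) + e^{\Phi(q) y} W_{\Phi(q)}'(y) = \Phi(q)\,W^{(q)}(y) + \Theta^{(q)}(y)$, the same identity already used in \eqref{derivative_gamma_1_A}. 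Hence $W^{(q)'}(x-A) - \Phi(q)\,W^{(q)}(x-A) = \Theta^{(q)}(x-A)$, and the claimed formula $\partial\Gamma_3/\partial A = -\Theta^{(q)}(x-A)\,F(A)$ follows.

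The computation is short and the one point of delicacy is the justification of the differentiations when $h$ is merely locally bounded and measurable. This is exactly what the change of variables above is for: it reduces matters to the absolute continuity in $A$ of $\int_A^\infty e^{-\Phi(q) z} h(z)\diff z$ and of $\int_A^x W^{(q)}(x-y) h(y)\diff y$, so the identity holds for Lebesgue-a.e.\ $A$ and, in particular, at every point of continuity of $h$ --- which is all the applications require and is consistent with the proof of Lemma~4.4 in \cite{Egami-Yamazaki-2010-1}, where continuity of $h$ was assumed but, as noted, not actually used for this step. I do not anticipate any genuine obstacle beyond this bookkeeping; if one prefers to avoid invoking a.e.\ differentiability, one may simply assume $h$ continuous at $A$, in which case all the derivatives above are classical.
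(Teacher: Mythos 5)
Your computation is correct and is the natural one (the paper itself only cites the proof of Lemma 4.4 of \cite{Egami-Yamazaki-2010-1}, which proceeds by differentiating the scale-function expression of Lemma \ref{lemma_gamma_3} and using $W^{(q)'}=\Phi(q)W^{(q)}+\Theta^{(q)}$, exactly as you do). The gap is in the justification step, and it is precisely the point the paper flags in the sentence preceding the lemma: $h$ is only locally bounded and measurable, the lemma asserts the derivative formula at \emph{every} $A$, and the paper stresses that continuity of $h$ is \emph{not} required. Your argument differentiates the two terms of $\Gamma_3$ separately, invoking Lebesgue differentiation for $F(A)=\int_0^\infty e^{-\Phi(q)y}h(y+A)\diff y$ and the Leibniz rule for $\int_A^x W^{(q)}(x-y)h(y)\diff y$; each of these steps can genuinely fail at a discontinuity point of $h$ (each term separately need not be differentiable there), so you only obtain the identity for a.e.\ $A$, or under the extra hypothesis that $h$ is continuous at $A$ --- which is a weaker statement than the lemma and than what the subsequent results (Proposition \ref{lemma_derivative_A} and the fit conditions, stated for arbitrary $A$) rely on.

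The fix requires no extra hypotheses, because the two offending contributions cancel \emph{before} the limit rather than after. Write the difference quotient of $\Gamma_3$ and group the terms involving $h$ near $A$: using $F(A+c)-F(A)=(e^{\Phi(q)c}-1)e^{\Phi(q)A}\int_{A+c}^\infty e^{-\Phi(q)z}h(z)\diff z-\int_A^{A+c}e^{-\Phi(q)(z-A)}h(z)\diff z$, the quotient becomes the sum of $\frac{1}{c}\bigl(W^{(q)}(x-A-c)-W^{(q)}(x-A)\bigr)F(A+c)$, a term converging to $\Phi(q)W^{(q)}(x-A)F(A)$, and the remainder $\frac{1}{c}\int_A^{A+c}\bigl[W^{(q)}(x-y)-W^{(q)}(x-A)e^{-\Phi(q)(y-A)}\bigr]h(y)\diff y$. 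The bracket vanishes at $y=A$ and is Lipschitz in $y$ near $A$ (since $W^{(q)}$ is $C^1$ at $x-A>0$ under the no-atom assumption), hence is $O(|c|)$ on the integration interval; as $h$ is locally bounded, this remainder tends to $0$ for \emph{every} $A$, with no continuity or Lebesgue-point assumption, and the same argument handles the left limit $c\uparrow 0$. The first term converges to $-W^{(q)'}(x-A)F(A)$ using only continuity of $F$, and your identity $W^{(q)'}(x-A)-\Phi(q)W^{(q)}(x-A)=\Theta^{(q)}(x-A)$ then gives the lemma in full strength.
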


We are now ready to prove  Proposition  \ref{lemma_derivative_A}.
\begin{proof}[Proof of Proposition  \ref{lemma_derivative_A}]
By combining \eqref{derivative_gamma_1_A} and Lemmas \ref{derivative_A_gamma_2} and \ref{derivative_A_gamma_3}, we
obtain
\begin{align*}
\frac \partial {\partial A} u_A(x)
= -\Theta^{(q)}(x-A)  \Psi(A) + g'(A)  Q(x;A)
\end{align*}
where
\begin{multline*}
Q(x;A) := Z^{(q)}(x-A) - \frac q {\Phi(q)} W^{(q)}(x-A) \\ - \int_0^\infty \Pi (\diff u) \Big( W^{(q)} (x-A)  \frac  {1 - e^{-\Phi(q) u}} {\Phi(q)} - \int_A^{(u+A) \wedge x}W^{(q)}(x-z) \diff z \Big), \quad x > A.
\end{multline*}
By Lemma \ref{lemma_gamma_1} and modifying Lemma \ref{lemma_gamma_2},  we can also write
\begin{align*}
Q(x; A) = \E^x\left[ e^{-q \tau_A} \right] - \E^x\left[ e^{-q \tau_A} 1_{\{X_{\tau_A} < A, \, \tau_A < \infty\}}\right] = \E^x\left[ e^{-q \tau_A} 1_{\{X_{\tau_A} = A, \, \tau_A < \infty \}}\right], \quad x > A.
\end{align*}
A spectrally negative \lev process creeps downward if and only if there is a Gaussian component, i.e.,  
\begin{align*}
\p^x \left\{ X_{\tau_A} = A,  \tau_A < \infty \right\} > 0 \quad \forall x > A \Longleftrightarrow \sigma > 0;
\end{align*}
see Exercise 7.6 of \cite{Kyprianou_2006}.   Hence
\begin{align*}
\sigma > 0 \Longleftrightarrow Q(x;A) > 0, \; \forall x > A.
\end{align*}
This proves the desired result  for the case $\sigma = 0$.  For the case $\sigma > 0$, as in \cite{Biffis_Kyprianou_2010, Pistorius_2005}, we can also write
\begin{align*}
Q(x; A) = \frac {\sigma^2} 2 \left( W^{(q)'}(x-A) - \Phi(q) W^{(q)}(x-A)\right) = \frac {\sigma^2} 2  \Theta^{(q)}(x-A),
\end{align*}
and hence it also holds when $\sigma > 0$ as well.
\end{proof}

\subsection{Continuous and smooth fit}
We now pursue $A^*$ such that $u_{A^*}(A^*+) = g(A^*)$ and $u_{A^*}'(A^*+) = g'(A^*)$ for the cases
\begin{enumerate}
\item $X$ is of bounded variation, and
\item $\sigma > 0$,
\end{enumerate}
respectively.  We exclude the case $X$ is of unbounded variation with $\sigma = 0$ (in this case, $W^{(q)'}(0+) = \infty$ by \eqref{at_zero} and hence the interchange of limits over integrals we conduct below may not be valid).   However, this can be alleviated and the results hold generally for all spectrally negative \lev processes when $g$ is a constant in a neighborhood of $A^*$. Examples include \cite{Egami-Yamazaki-2010-1} where $g(x) = 0$ on
$(0,\infty)$ and \cite{Salminen_1985} where $g(x) = 1$
on $(-\infty,0]$ and $g(x) = 2$ on $(0,\infty)$; see Section
\ref{section_extension}. 
For continuous fit, we need to obtain
\begin{align*}
\Gamma_1(A+; A) := \lim_{x \downarrow A}\Gamma_1(x; A), \quad \Gamma_2(A+; A) := \lim_{x \downarrow A}\Gamma_2(x ;A), \quad \textrm{and} \quad \Gamma_3(A+;A) := \lim_{x \downarrow A}\Gamma_3(x;A)
\end{align*}
if these limits exist.  Define also $\varphi_{g,A}^{(q)}(A+) := \lim_{x \downarrow A}\varphi_{g,A}^{(q)}(x)$,
if it exists.  It is easy to see that
\begin{align}
\Gamma_1(A+;A) =  g(A) \left( 1 - \frac  q {\Phi(q)} {W^{(q)} (0)}  \right) \quad \textrm{and} \quad 
\Gamma_3(A+;A) =
W^{(q)} (0) \int_0^\infty e^{- \Phi(q) y}   h(y+A) \diff y. \label{limit_gamma_1_3}
\end{align}

The result for $\Gamma_2$ is immediate by the dominated convergence theorem thanks to Lemma \ref{lemma_lipschitz} and \eqref{bound_rho_by_rho}-\eqref{gamma_2_split}.
\begin{lemma} \label{lemma_rho}
Given (1)-(2) of Lemma \ref{lemma_lipschitz} for a given $A \in \R$,  we have
\begin{enumerate}
\item $\varphi_{g,A}^{(q)}(A+) = 0$,
\item  $\Gamma_2(A+; A) = W^{(q)}(0) \rho_{g,A}^{(q)}$.
\end{enumerate}
\end{lemma}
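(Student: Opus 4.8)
The plan is to prove the two assertions of Lemma \ref{lemma_rho} directly from the representation of $\Gamma_2$ and $\varphi_{g,A}^{(q)}$ established in Lemma \ref{lemma_gamma_2}, passing to the limit $x \downarrow A$ inside the integrals by dominated convergence. Recall from \eqref{gamma_2_split} that, for $x > A$,
\begin{align*}
\Gamma_2(x;A) = W^{(q)}(x-A)\, \rho_{g,A}^{(q)} - \varphi_{g,A}^{(q)}(x),
\end{align*}
where $\rho_{g,A}^{(q)}$ does not depend on $x$ and is finite by Lemma \ref{lemma_lipschitz} under hypotheses (1)--(2). Since $W^{(q)}$ is continuous on $[0,\infty)$ with $W^{(q)}(x-A) \to W^{(q)}(0)$ as $x \downarrow A$, the first term converges to $W^{(q)}(0)\,\rho_{g,A}^{(q)}$. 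Hence assertion (2) follows immediately once assertion (1) — namely $\varphi_{g,A}^{(q)}(A+) = 0$ — is established, so the whole lemma reduces to computing the limit of $\varphi_{g,A}^{(q)}(x)$.

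For (1), I would write out
\begin{align*}
\varphi_{g,A}^{(q)}(x) = \int_0^\infty \Pi(\diff u) \int_0^{u \wedge (x-A)} W^{(q)}(x-z-A)\,(g(z+A-u)-g(A))\,\diff z.
\end{align*}
As $x \downarrow A$, the inner integral is over $z \in [0, u \wedge (x-A)]$, an interval whose length shrinks to $0$ for every fixed $u > 0$, so the inner integrand (a bounded-times-integrable quantity) has inner integral tending to $0$ pointwise in $u$; the integrand on $(0,\infty)$ therefore converges to $0$ $\Pi$-a.e. The point is to dominate uniformly in $x$ in a neighborhood $(A, A+\epsilon)$ of $A$. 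Using $W^{(q)}(x-z-A) = e^{\Phi(q)(x-z-A)} W_{\Phi(q)}(x-z-A)$ together with the monotone bound $W_{\Phi(q)} \le 1/\psi'(\Phi(q))$ from \eqref{scale_function_asymptotic_version}, exactly as in \eqref{bound_rho_by_rho}, one gets
\begin{align*}
|\varphi_{g,A}^{(q)}(x)| \le e^{\Phi(q)(x-A)}\, \frac{\overline{\rho}_{g,A}^{(q)}}{\psi'(\Phi(q))} \le e^{\Phi(q)\epsilon}\, \frac{\overline{\rho}_{g,A}^{(q)}}{\psi'(\Phi(q))}
\end{align*}
for all $x \in (A, A+\epsilon)$, and more to the point the integrand in $u$ is dominated by $e^{\Phi(q)\epsilon}$ times the ($u$-sliced) integrand defining $\overline{\rho}_{g,A}^{(q)}$, which is $\Pi$-integrable by Lemma \ref{lemma_lipschitz}. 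The dominated convergence theorem then gives $\varphi_{g,A}^{(q)}(A+) = 0$, completing (1) and hence (2).

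The main (and only real) obstacle is making the domination rigorous: one must check that the bound used is genuinely uniform over $x$ in a right-neighborhood of $A$ and that the dominating function is $\Pi \otimes \diff z$-integrable, which is precisely the content of the finiteness of $\overline{\rho}_{g,A}^{(q)}$ guaranteed by Lemma \ref{lemma_lipschitz}; everything else is continuity of $W^{(q)}$ at $0$ and the shrinking-interval argument. Since the excerpt already remarks that ``the result for $\Gamma_2$ is immediate by the dominated convergence theorem thanks to Lemma \ref{lemma_lipschitz} and \eqref{bound_rho_by_rho}--\eqref{gamma_2_split},'' the proof is expected to be just a few lines making this remark precise.
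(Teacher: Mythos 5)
Your argument is correct and coincides with the paper's own (the paper offers no more than the remark that the result follows from the dominated convergence theorem via Lemma \ref{lemma_lipschitz} and \eqref{bound_rho_by_rho}--\eqref{gamma_2_split}, which is exactly what you make precise). The uniform domination by $e^{\Phi(q)\epsilon}$ times the $u$-slice of $\overline{\rho}_{g,A}^{(q)}$, together with continuity of $W^{(q)}$ at $0+$, is precisely the intended justification.
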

%
%
%
Now Lemma \ref{lemma_rho} and \eqref{limit_gamma_1_3} show
\begin{align}
u_A(A+) = g(A) +  W^{(q)}(0) \Psi(A). \label{cont_fit_difference}
\end{align}
This together with \eqref{at_zero} shows the following.

\begin{proposition}[Continuous Fit] \label{lemma_continuous_fit}
Fix $A \in \R$ and suppose (1)-(2) of Lemma \ref{lemma_lipschitz} hold. 
\begin{enumerate}
\item If $X$ is of bounded variation, the continuous fit condition $u_A(A+) = g(A)$ holds if and only if
\begin{align*}
\Psi(A) = 0. 
\end{align*}
\item If $X$ is of unbounded variation (including the case $\sigma = 0$), it is automatically satisfied.
\end{enumerate}
\end{proposition}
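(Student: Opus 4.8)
The plan is to obtain the one-sided limit $u_A(A+)$ by passing to the limit $x \downarrow A$ in the decomposition $u_A(x) = \Gamma_1(x;A) + \Gamma_2(x;A) + \Gamma_3(x;A)$ (valid for $x > A$), and then to read off the two cases from the known boundary behaviour of the scale function recorded in \eqref{at_zero}. Since condition (1) of Lemma \ref{lemma_lipschitz} makes $g$ continuous at $A$, and since $Z^{(q)}$ and $W^{(q)}$ are continuous on $[0,\infty)$ with $Z^{(q)}(0)=1$, the term $\Gamma_1(x;A)$ from Lemma \ref{lemma_gamma_1} converges to $g(A)\bigl(1 - \frac{q}{\Phi(q)}W^{(q)}(0)\bigr)$; for $\Gamma_3(x;A)$ from Lemma \ref{lemma_gamma_3}, the first summand tends to $W^{(q)}(0)\int_0^\infty e^{-\Phi(q)y}h(y+A)\diff y$ while $\int_A^x W^{(q)}(x-y)h(y)\diff y \to 0$ because $h$ is locally bounded and $W^{(q)}$ is bounded near the origin. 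These two facts are exactly \eqref{limit_gamma_1_3}.

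For $\Gamma_2$ I would invoke Lemma \ref{lemma_rho}: writing $\Gamma_2(x;A) = W^{(q)}(x-A)\rho_{g,A}^{(q)} - \varphi_{g,A}^{(q)}(x)$ as in \eqref{gamma_2_split}, the first term tends to $W^{(q)}(0)\rho_{g,A}^{(q)}$, while $\varphi_{g,A}^{(q)}(A+) = 0$ by dominated convergence: the inner integral in $\varphi_{g,A}^{(q)}(x)$ is over $z \in [0,\, u\wedge(x-A)]$, an interval shrinking to $\{0\}$ as $x \downarrow A$, so the integrand goes to zero pointwise in $u$, and it is dominated by the integrand defining the finite quantity $\overline{\varphi}_{g,A}^{(q)}$, whose finiteness follows from Lemma \ref{lemma_lipschitz} together with \eqref{bound_rho_by_rho}. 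Summing the three limits and collecting the coefficient of $W^{(q)}(0)$ yields precisely $u_A(A+) = g(A) + W^{(q)}(0)\,\Psi(A)$, i.e. \eqref{cont_fit_difference}.

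The conclusion is then immediate from \eqref{at_zero}. If $X$ is of bounded variation, $W^{(q)}(0) = 1/\mu$, and $\mu > 0$ by the standing assumption that $X$ is not a negative subordinator; hence $W^{(q)}(0) \in (0,\infty)$ and $u_A(A+) = g(A)$ holds if and only if $\Psi(A) = 0$, which is part (1). If $X$ is of unbounded variation — whether $\sigma > 0$ or $\sigma = 0$ with $\Pi(0,\infty) = \infty$ — then $W^{(q)}(0) = 0$, so $u_A(A+) = g(A)$ regardless of the value of $\Psi(A)$, which is part (2).

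The only step needing genuine care is the interchange of limit and integral in the $\Gamma_2$ term, namely the claim $\varphi_{g,A}^{(q)}(A+) = 0$; but this is already furnished by Lemma \ref{lemma_rho}, whose proof rests on the integrability hypothesis of Lemma \ref{lemma_lipschitz}. Everything else is bookkeeping around \eqref{cont_fit_difference}, and the substantive content of the proposition is the dichotomy $W^{(q)}(0) > 0$ versus $W^{(q)}(0) = 0$ encoded in \eqref{at_zero}.
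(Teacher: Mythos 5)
Your proposal is correct and follows essentially the same route as the paper: take $x \downarrow A$ in the decomposition via \eqref{limit_gamma_1_3} and Lemma \ref{lemma_rho} to arrive at $u_A(A+) = g(A) + W^{(q)}(0)\Psi(A)$, i.e. \eqref{cont_fit_difference}, and then conclude from the dichotomy for $W^{(q)}(0)$ in \eqref{at_zero}. The extra detail you supply (the dominated convergence argument for $\varphi_{g,A}^{(q)}(A+)=0$ and the positivity of $\mu$ in the bounded variation case) is exactly what the paper leaves implicit.
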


%
%
%

For the case $X$ is of unbounded variation with $\sigma > 0$, we shall pursue smooth
fit condition at $A \in \R$. The following lemma says in this case
that the derivative can go into the integral sign and we can
further interchange the limit.


\begin{lemma} \label{lemma_for_smooth_fit}
Fix $A \in \R$.
 If $\sigma > 0$ and suppose (1)-(2) of Lemma \ref{lemma_lipschitz} hold,
then
\begin{align}
\varphi_{g,A}^{(q)'}(x) = \int_0^\infty \Pi (\diff u) \int_0^{u
\wedge (x-A)} W^{(q)'} (x-z-A) [g (z+A -u) - g(A)] \diff z, \quad x > A,
\label{varrho_derivative_g}
\end{align}
and
\begin{align}
\varphi_{g,A}^{(q)'}(A+) = 0.
\label{varrho_derivative_convergence_g}
\end{align}
\end{lemma}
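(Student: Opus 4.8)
The plan is to differentiate $\varphi_{g,A}^{(q)}(x)$ directly in $x$ under the double integral $\int_0^\infty \Pi(\diff u)\int_0^{u\wedge(x-A)}\cdots\diff z$, justify the interchange, and then take the limit $x\downarrow A$. First I would recall that $\varphi_{g,A}^{(q)}(x)=\int_0^\infty\Pi(\diff u)\,F(x;u)$ with $F(x;u):=\int_0^{u\wedge(x-A)}W^{(q)}(x-z-A)(g(z+A-u)-g(A))\diff z$. For $x>A$ the upper limit $u\wedge(x-A)$ is either the constant $u$ (when $u\le x-A$) or $x-A$ (when $u>x-A$); in the latter case the integrand at $z=x-A$ is $W^{(q)}(0)(g(2A-x-u+x)-g(A))\cdot$, wait — more carefully, at $z=x-A$ it is $W^{(q)}(0)\,(g(A-u)-g(A))$, but since $\sigma>0$ we have $W^{(q)}(0)=0$ by \eqref{at_zero}, so the boundary term from Leibniz's rule vanishes. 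Hence in both regimes $\partial_x F(x;u)=\int_0^{u\wedge(x-A)}W^{(q)'}(x-z-A)(g(z+A-u)-g(A))\diff z$, which is \eqref{varrho_derivative_g} once the $\Pi(\diff u)$-interchange is licensed.

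The interchange of $\partial_x$ with $\int_0^\infty\Pi(\diff u)$ requires a dominating function independent of $x$ on a neighborhood of a fixed $x_0>A$. Here the key structural inputs are: $W^{(q)'}$ is finite and continuous on $(0,\infty)$ (no atoms, Assumption; and $\sigma>0$ gives $W^{(q)'}(0+)=2/\sigma^2<\infty$ by \eqref{at_zero}), so $W^{(q)'}$ is bounded on compact subsets of $[0,\infty)$ — this is exactly why the case $\sigma=0$, unbounded variation is excluded. Thus for $x$ in a compact neighborhood of $x_0$, $|W^{(q)'}(x-z-A)|\le C$ uniformly over $0\le z\le u\wedge(x-A)$, and $|\partial_x F(x;u)|\le C\int_0^{u\wedge(x_0-A+\epsilon)}|g(z+A-u)-g(A)|\diff z$. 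Splitting the $u$-integral at $1$: for small $u$, condition (1) (that $g$ is $C^2$ near $A$) gives $|g(z+A-u)-g(A)|\le C'u$ on $z\le u$, so the contribution is $O(u^2)$ which is $\Pi$-integrable near $0$ by \eqref{integrability_levy_measure}; for $u\ge 1$, the inner integral is bounded by $(x_0-A+\epsilon)\max_{A-x_0+\epsilon-1\le\zeta\le A}|g(\zeta)-g(A)|$ up to shifting the max window, which is $\Pi$-integrable on $[1,\infty)$ by \eqref{integrability_of_g} of Lemma \ref{lemma_lipschitz} (adjusting the constant $A-u\le\zeta\le A$ window to $A-u\le\zeta\le A$ suffices after absorbing the bounded shift). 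This gives the required dominating function, so differentiation under both integral signs is valid and \eqref{varrho_derivative_g} holds.

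For \eqref{varrho_derivative_convergence_g}, I would apply dominated convergence to the expression in \eqref{varrho_derivative_g} as $x\downarrow A$. For each fixed $u$, the inner integral $\int_0^{u\wedge(x-A)}W^{(q)'}(x-z-A)(g(z+A-u)-g(A))\diff z$ has integration domain shrinking to $\{0\}$, so it tends to $0$ pointwise (the integrand stays bounded by $C\cdot\sup_{A-u\le\zeta\le A}|g(\zeta)-g(A)|$ on the shrinking domain, and the domain length $u\wedge(x-A)\to 0$). The dominating function over $u$ is the same one constructed above (with $x$ restricted to $(A,A+\epsilon]$), so $\varphi_{g,A}^{(q)'}(x)\to\int_0^\infty\Pi(\diff u)\cdot 0=0$. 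The main obstacle is purely the bookkeeping of the dominating bound near $u=0$: one must exploit the $C^2$ (in fact $C^1$ would do) regularity of $g$ at $A$ together with $W^{(q)'}$ being bounded near $0$ — i.e., $\sigma>0$ — to get an $O(u^2)$ estimate that survives $\int_0(1\wedge z^2)\Pi(\diff z)<\infty$; without the boundedness of $W^{(q)'}(0+)$ this breaks, which is precisely the excluded case.
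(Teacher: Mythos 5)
Your argument is correct and rests on the same two pillars as the paper's proof: differentiation under the $\Pi(\diff u)$-integral justified by an integrable dominating function built from the $C^2$ property of $g$ near $A$ (an $O(u^2)$ bound for small jumps, via \eqref{integrability_levy_measure}) and from \eqref{integrability_of_g} for large jumps, followed by dominated convergence as $x \downarrow A$, with $\sigma>0$ entering exactly where you say it does, namely through $W^{(q)'}(0+)=2/\sigma^2<\infty$. The difference is in the bookkeeping. The paper does not invoke Leibniz's rule: it estimates the difference quotients of $\varphi_{g,A}^{(q)}$ directly, bounding $\bigl(W^{(q)}(x+c-z-A)-W^{(q)}(x-z-A)\bigr)/c$ through the scaled function $W_{\Phi(q)}$ and $L:=\sup_{y>0}W_{\Phi(q)}'(y)$, treating the sliver of domain between $u\wedge(x-A)$ and $u\wedge(x+c-A)$ by a separate term ($f_2$), and it proves an $x$-uniform estimate (Lemma \ref{bound_c_1}) before letting $x\downarrow A$. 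You instead use that $W^{(q)'}$ is bounded on compact subsets of $[0,\infty)$ (valid precisely because $\sigma>0$) and kill the moving upper limit through the Leibniz boundary term, which carries the factor $W^{(q)}(0)=0$; this is more local and somewhat lighter, while the paper's route yields bounds uniform in $x$ and sidesteps any regularity-in-$z$ issue at the boundary. Two small points to patch, neither a genuine gap: first, the boundary value in your Leibniz computation is $W^{(q)}(0)\,(g(x-u)-g(A))$, not $W^{(q)}(0)(g(A-u)-g(A))$ --- immaterial since the factor vanishes, and since $g$ is only locally bounded it is cleaner to bound the boundary contribution of the difference quotient by $W^{(q)}(c)\max_{A-u\le \zeta\le A+c}|g(\zeta)-g(A)| \to 0$ as $c\downarrow 0$ rather than cite Leibniz literally; second, your split of the $u$-integral at $1$ uses the Taylor bound for ``small $u$'', but if the $C^2$ neighborhood of $A$ has radius $\epsilon<1$ the range $u\in[\epsilon,1]$ is not covered by that bound --- it is handled trivially by local boundedness of $g$ and $\Pi[\epsilon,1]<\infty$, exactly as in the paper's proof of Lemma \ref{bound_c_1}.
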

\begin{proof}
  See Appendix \ref{proof_lemma_for_smooth_fit}.
\end{proof}


We are now ready to obtain $\Gamma_i'(A+; A) $ for $1 \leq i \leq 3$.

\begin{lemma} \label{lemma_smooth_fit}
Fix $A \in \R$. Suppose $\sigma > 0$ and (1)-(2) of Lemma \ref{lemma_lipschitz} hold.  Then,
\begin{enumerate}
\item $\Gamma_1'(A+,A) =  -  W^{(q)'}(0+)  g(A) q/ {\Phi(q)}$,
\item $\Gamma_2'(A+;A) = W^{(q)'} (0+)  \rho_{g,A}^{(q)}$,
\item $\Gamma_3'(A+;A)= W^{(q)'} (0+) \int_0^\infty e^{- \Phi(q) y}   h(y+A) \diff y$.
\end{enumerate}
\end{lemma}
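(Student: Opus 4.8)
The plan is to differentiate, in the variable $x$, each of the three semi-explicit expressions $\Gamma_i(\cdot\,;A)$ derived in Section~\ref{sec:problem}, and then to let $x\downarrow A$. Everything hinges on the fact that $\sigma>0$ forces $X$ to be of unbounded variation, so that by \eqref{at_zero} one has $W^{(q)}(0)=0$ and, crucially, $W^{(q)'}(0+)=2/\sigma^2<\infty$. Finiteness of this right limit is what legitimizes the interchanges of derivative, integral and limit below; the one genuinely delicate such interchange --- the one for the jump term --- has already been isolated in Lemma~\ref{lemma_for_smooth_fit}, so the present argument is essentially bookkeeping.

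For $\Gamma_1$, I would start from the closed form in Lemma~\ref{lemma_gamma_1} and use $Z^{(q)'}(y)=qW^{(q)}(y)$ to get $\Gamma_1'(x;A)=g(A)\bigl[qW^{(q)}(x-A)-\tfrac{q}{\Phi(q)}W^{(q)'}(x-A)\bigr]$ for $x>A$; letting $x\downarrow A$ and inserting $W^{(q)}(0)=0$ and $W^{(q)'}(0+)=2/\sigma^2$ gives part~(1). For $\Gamma_3$, I would differentiate the expression in Lemma~\ref{lemma_gamma_3}: the term $W^{(q)}(x-A)\int_0^\infty e^{-\Phi(q)y}h(y+A)\diff y$ differentiates trivially, while for $\int_A^x W^{(q)}(x-y)h(y)\diff y$ the Leibniz rule applies --- the boundary contribution is $W^{(q)}(0)h(x)=0$ and the remaining integrand $W^{(q)'}(x-y)h(y)$ is bounded on the shrinking interval because $W^{(q)'}$ is continuous on $(0,\infty)$ with finite right limit at $0$ and $h$ is locally bounded. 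This yields $\Gamma_3'(x;A)=W^{(q)'}(x-A)\int_0^\infty e^{-\Phi(q)y}h(y+A)\diff y-\int_A^x W^{(q)'}(x-y)h(y)\diff y$, and since the last integral is dominated by $\sup_{[A,x]}|h|\cdot\bigl(W^{(q)}(x-A)-W^{(q)}(0)\bigr)\to0$ as $x\downarrow A$, part~(3) follows.

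For $\Gamma_2$, I would use Lemma~\ref{lemma_gamma_2} to write $\Gamma_2(x;A)=W^{(q)}(x-A)\rho_{g,A}^{(q)}-\varphi_{g,A}^{(q)}(x)$, where $\rho_{g,A}^{(q)}$ is a finite constant by Lemma~\ref{lemma_lipschitz}; differentiating and invoking Lemma~\ref{lemma_for_smooth_fit} --- which supplies both the formula \eqref{varrho_derivative_g} for $\varphi_{g,A}^{(q)}{}'$ and its vanishing limit \eqref{varrho_derivative_convergence_g} --- produces $\Gamma_2'(x;A)=W^{(q)'}(x-A)\rho_{g,A}^{(q)}-\varphi_{g,A}^{(q)}{}'(x)\to W^{(q)'}(0+)\,\rho_{g,A}^{(q)}$ as $x\downarrow A$, which is part~(2). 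The only steps in this lemma requiring any care are the Leibniz differentiation of $\int_A^x W^{(q)}(x-y)h(y)\diff y$ and the vanishing of its limit; both work precisely because $\sigma>0$ keeps $W^{(q)}(0)=0$ and $W^{(q)'}(0+)<\infty$, which is exactly why the unbounded-variation case with $\sigma=0$ is excluded. Since the substantive analytic work has been absorbed into Lemma~\ref{lemma_for_smooth_fit}, there is no real obstacle remaining here.
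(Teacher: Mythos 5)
Your proposal is correct and follows essentially the same route as the paper: differentiate the closed-form expressions from Lemmas \ref{lemma_gamma_1}--\ref{lemma_gamma_3} in $x$, let $x\downarrow A$, and use $W^{(q)}(0)=0$, $W^{(q)'}(0+)=2/\sigma^2<\infty$ together with Lemma \ref{lemma_for_smooth_fit} (in particular \eqref{varrho_derivative_convergence_g}) for the jump term. The only difference is that you spell out the Leibniz/dominated-convergence bookkeeping for $\Gamma_1$ and $\Gamma_3$, which the paper treats as immediate.
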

\begin{proof}
(1) It is immediate by Lemma \ref{lemma_gamma_1}. 
(2) By \eqref{gamma_2_split},
\begin{align*}
\Gamma_2'(x;A)  =   W^{(q)'} (x-A) \rho_{g,A}^{(q)} - \varphi_{g,A}^{(q)'}(x), \quad x > A.
\end{align*}
By taking $x \downarrow A$ via \eqref{varrho_derivative_convergence_g}, we have the claim.
(3) We have
\begin{align*}
\Gamma_3'(A+;A)&= \lim_{x \downarrow A}\left[ W^{(q)'} (x-A)
\int_0^\infty e^{- \Phi(q) y}   h(y+A) \diff y -
\int_A^x W^{(q)'} (x-y) h(y) \diff y \right] \\
&= W^{(q)'} (0+) \int_0^\infty e^{- \Phi(q) y}   h(y+A) \diff y.
\end{align*}
\end{proof}

By the lemma above, we obtain
\begin{align*}
u_A'(A+) = W^{(q)'}(0+) \Psi(A)
\end{align*}
or equivalently,  by virtue of \eqref{at_zero}, the smooth fit condition at $A^*$ is equivalent to \eqref{first_order_cond}.

\begin{proposition}[Smooth Fit] \label{lemma_smooth_fit2}
Fix $A \in \R$. Suppose $\sigma > 0$ and (1)-(2) of Lemma \ref{lemma_lipschitz} hold. 
Then, the smooth fit condition $u_A'(A+) = g'(A)$ holds if and only if
\begin{align*}
\Psi(A) = \frac {\sigma^2} 2 g'(A).
\end{align*}
\end{proposition}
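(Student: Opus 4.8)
The plan is to derive the boundary value $u_A'(A+)$ directly from the decomposition $u_A = \Gamma_1 + \Gamma_2 + \Gamma_3$ on $\{x > A\}$, differentiate each $\Gamma_i$ in $x$, let $x \downarrow A$, and then match against $g'(A)$ using the explicit value of $W^{(q)'}(0+)$ from \eqref{at_zero}. Since Lemma \ref{lemma_smooth_fit} already supplies $\Gamma_i'(A+;A)$ for $i=1,2,3$ under exactly the hypotheses $\sigma > 0$ and (1)--(2) of Lemma \ref{lemma_lipschitz}, most of the work is done: I would simply add the three expressions.

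First I would write
\begin{align*}
u_A'(A+) &= \Gamma_1'(A+;A) + \Gamma_2'(A+;A) + \Gamma_3'(A+;A) \\
&= W^{(q)'}(0+)\Big( -\frac{q}{\Phi(q)} g(A) + \rho_{g,A}^{(q)} + \int_0^\infty e^{-\Phi(q)y} h(y+A)\diff y \Big) = W^{(q)'}(0+)\,\Psi(A),
\end{align*}
invoking the definition of $\Psi$. Then, since $\sigma > 0$, \eqref{at_zero} gives $W^{(q)'}(0+) = 2/\sigma^2$, which is finite and strictly positive. Hence $u_A'(A+) = g'(A)$ holds if and only if $(2/\sigma^2)\Psi(A) = g'(A)$, i.e. if and only if $\Psi(A) = \tfrac{\sigma^2}{2} g'(A)$, which is the claim. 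I would also note in passing that this is precisely the first-order condition \eqref{first_order_cond}, recovering the asserted equivalence with Proposition \ref{lemma_derivative_A}.

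One small technical point worth making explicit: the identity $u_A'(A+) = \sum_i \Gamma_i'(A+;A)$ requires that each one-sided limit $\Gamma_i'(A+;A)$ exists, which is guaranteed by Lemma \ref{lemma_smooth_fit} (itself relying on Lemma \ref{lemma_for_smooth_fit}, where $\sigma > 0$ is used to ensure $W^{(q)'}(0+) < \infty$ so that the limit may be passed inside the integral defining $\varphi_{g,A}^{(q)'}$). There is essentially no obstacle here — the proof is a two-line assembly of Lemma \ref{lemma_smooth_fit} and \eqref{at_zero}. The only thing requiring any care is making sure the hypotheses line up, and they do: both Lemma \ref{lemma_smooth_fit} and the proposition assume $\sigma > 0$ together with (1)--(2) of Lemma \ref{lemma_lipschitz}, so nothing extra needs to be verified.
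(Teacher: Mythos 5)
Your proof is correct and follows exactly the paper's route: the paper likewise sums the three limits $\Gamma_i'(A+;A)$ from Lemma \ref{lemma_smooth_fit} to get $u_A'(A+) = W^{(q)'}(0+)\Psi(A)$ and then invokes \eqref{at_zero} with $\sigma > 0$ to conclude the equivalence with $\Psi(A) = \tfrac{\sigma^2}{2}g'(A)$. Nothing is missing.
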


We summarize the results obtained in Propositions
\ref{lemma_continuous_fit}-\ref{lemma_smooth_fit2} in Table
\ref{table:summary}.
\begin{table}[htbp]
\begin{tabular}{|c|c| c|}
  \hline
       & Continuous fit & Smooth fit\\
\hline
\noindent bounded var.\ & $\Psi(A) = 0$ & N/A \\
\hline
$\sigma > 0$    & Automatically satisfied  &
$\Psi(A) = {\sigma^2} g'(A)/2$ \\
  \hline
\end{tabular}
\caption{Summary of Continuous- and Smooth-fit Conditions.}
\label{table:summary}
\end{table}
It is clear from Proposition \ref{lemma_derivative_A} and Table \ref{table:summary} that the first-order condition and the continuous/smooth fit condition are indeed equivalent.

\subsection{Obtaining optimal solutions} \label{subsection_obtaining}  There are a number of examples where the optimality of the threshold strategy can be derived directly from the structure of the problem.  See, e.g., \cite{Christensen_2011_2, peskir-shiryaev} for examples of sufficient optimality conditions of the threshold strategy.  In such cases, the problem reduces to solely computing $A^*$. 

For a general problem where the optimality of the threshold strategy is not proven,  one needs to show that the value function satisfies the variational inequality:
\begin{enumerate}
\item[(i)] $u_{A^*}(x) \geq g(x)$ for all $x \in \R$,
\item[(ii)] $(\mathcal{L}-q) u_{A^*}(x) + h(x) = 0$ for all $x \in (A^*,\infty)$,
\item [(iii)] $(\mathcal{L}-q) u_{A^*}(x) + h(x) < 0$ for all $x \in (-\infty, A^*)$;
\end{enumerate}
see e.g., \cite{sulem}.  Here $\mathcal{L}$ is the infinitesimal generator associated with
the process $X$ applied to a sufficiently smooth function $f$
\begin{align*}
\mathcal{L} f(x) &:= c f'(x) + \frac 1 2 \sigma^2 f''(x) + \int_0^\infty \left[ f(x-z) - f(x) +  f'(x) z 1_{\{0 < z < 1\}} \right] \Pi(\diff z).
\end{align*}
As we shall show shortly below, the conditions (i)-(ii) can be obtained upon some conditions.  
The proof of condition (iii) unfortunately relies on the structure of the problem; in order to complement this, we give examples where the optimality over all stopping times holds in the next section. 

\begin{lemma} \label{lemma_about_i}
Suppose $A^* \in \R$ satisfies \eqref{first_order_cond}, $g \in C^2[A^*, \infty)$ and
\begin{align}
\Psi(A) - \frac {\sigma^2} 2 g'(A) > 0, \quad 
\quad A>A^*. \label{inequality_above_A_star}
\end{align}
Then (i) is satisfied.
\end{lemma}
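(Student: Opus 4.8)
The plan is to show that $x \mapsto u_{A^*}(x) - g(x)$ is nonnegative by exploiting the sign information on $\partial u_A(x)/\partial A$ provided by Proposition \ref{lemma_derivative_A} together with \eqref{inequality_above_A_star}. For $x \leq A^*$ we have $u_{A^*}(x) = g(x)$ by definition, so (i) is trivial there and the whole content is the region $x > A^*$. The key observation is that for fixed $x$, as $A$ ranges over $(A^*, x)$, condition \eqref{inequality_above_A_star} gives $\Psi(A) - \tfrac{\sigma^2}{2} g'(A) > 0$, so by \eqref{equivalence_derivative_Phi} we get $\partial u_A(x)/\partial A \le 0$ for all such $A$; that is, $A \mapsto u_A(x)$ is nonincreasing on $(A^*, x)$. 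Hence $u_{A^*}(x) \ge \lim_{A \uparrow x} u_A(x)$, and the right-hand limit is exactly $g(x)$ because when $A = x$ the process starts at the boundary and stops immediately, i.e. $u_A(A) = g(A)$ (or, more carefully, $\lim_{A \uparrow x} u_A(x) = \lim_{A \uparrow x}[g(A) + \text{(terms vanishing as the starting point approaches the barrier)}]$, which one reads off from the explicit formulas in Lemmas \ref{lemma_gamma_1}, \ref{lemma_gamma_3}, \ref{lemma_gamma_2} using $W^{(q)}(0+)$, $Z^{(q)}(0) = 1$, and $\varphi_{g,A}^{(q)}(A+) = 0$ from Lemma \ref{lemma_rho}).

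First I would dispose of the trivial case $x \le A^*$. Then, for $x > A^*$, I would verify that the hypotheses of Proposition \ref{lemma_derivative_A} hold for each $A \in (A^*, x)$ — this needs (1)--(2) of Lemma \ref{lemma_lipschitz} and the integrability condition \eqref{integrability_sup}; since $g$ is assumed $C^2$ on $(A^*,\infty)$ and \eqref{inequality_above_A_star} presupposes $\Psi(A)$ is well defined for $A > A^*$, these should follow, though I would want to state clearly which regularity of $g$ near each such $A$ is being invoked (the hypothesis "$g$ is $C^2$ on $(A^*,\infty)$" covers the $C^2$-in-a-neighborhood requirement for every $A > A^*$). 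Next I would write $u_{A^*}(x) - g(x) = u_{A^*}(x) - \lim_{A \uparrow x} u_A(x) = -\int_{A^*}^{x} \frac{\partial}{\partial A} u_A(x)\, \diff A$, apply Proposition \ref{lemma_derivative_A} to rewrite the integrand as $\Theta^{(q)}(x-A)\big(\Psi(A) - \tfrac{\sigma^2}{2} g'(A)\big)$, and conclude the integral is nonnegative because $\Theta^{(q)} > 0$ and the parenthesized factor is $> 0$ on $(A^*,x)$ by \eqref{inequality_above_A_star}.

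The main obstacle I anticipate is justifying the identity $\lim_{A \uparrow x} u_A(x) = g(x)$ and, relatedly, the fundamental-theorem-of-calculus step $u_{A^*}(x) - \lim_{A\uparrow x} u_A(x) = -\int_{A^*}^x \partial_A u_A(x)\,\diff A$: one must check that $A \mapsto u_A(x)$ is absolutely continuous on $[A^*, x)$ with the stated derivative and that the boundary limit is continuous, which requires uniform-in-$A$ control of the various $\Pi(\diff u)$-integrals appearing in the $\Gamma_i$ and their $A$-derivatives near $A = x$. This is where the asymptotic and smoothness properties of $W^{(q)}$ near $0$ (formula \eqref{at_zero}), the bound \eqref{bound_rho_by_rho}, and Lemma \ref{lemma_rho} do the real work; alternatively one can bypass the limit computation by noting $\lim_{A \uparrow x} u_A(x) \ge g(x)$ is all that is needed (since $u_A(x) \ge g(x)$ would be circular, instead use that $u_A$ evaluated just above its own barrier is $g$ at the barrier plus a controlled remainder, cf. \eqref{cont_fit_difference} in the bounded-variation case and Lemma \ref{lemma_smooth_fit} in the $\sigma>0$ case). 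I would present the clean monotonicity-plus-boundary-value argument and relegate the continuity estimates to a remark or to the already-cited dominated-convergence arguments.
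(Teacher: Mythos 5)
Your overall strategy is the paper's: fix $x>A^*$, use \eqref{equivalence_derivative_Phi} together with \eqref{inequality_above_A_star} to see that $A\mapsto u_A(x)$ is nonincreasing on $(A^*,x)$, hence $u_{A^*}(x)\geq \lim_{A\uparrow x}u_A(x)$, and then identify the boundary limit. The gap is in the boundary step: your central claim that $\lim_{A\uparrow x}u_A(x)=g(x)$ ``because when $A=x$ the process starts at the boundary and stops immediately'' is false when $X$ has bounded variation. For such a process the lower half-line is irregular (the drift $\mu$ is strictly positive), so started at the barrier the process does not stop immediately, and indeed $W^{(q)}(0)=1/\mu>0$; by \eqref{cont_fit_difference} the limit is $g(x)+W^{(q)}(0)\Psi(x)$, not $g(x)$ --- this is precisely why continuous fit is a nontrivial condition in the bounded-variation case. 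To close the argument you must observe that in that case $\sigma=0$, so \eqref{inequality_above_A_star} reads $\Psi(x)>0$, whence the extra term $W^{(q)}(0)\Psi(x)$ is positive and $u_{A^*}(x)\geq g(x)+W^{(q)}(0)\Psi(x)\geq g(x)$. Your parenthetical about a ``controlled remainder, cf.\ \eqref{cont_fit_difference}'' points in the right direction but never identifies the remainder as $W^{(q)}(0)\Psi(x)$ nor uses the hypothesis to fix its sign; as written, the hypothesis \eqref{inequality_above_A_star} is used only for monotonicity, which is not enough in the bounded-variation case.

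A secondary remark: the absolute-continuity/fundamental-theorem-of-calculus machinery you worry about is unnecessary. Proposition \ref{lemma_derivative_A} gives the derivative $\partial u_A(x)/\partial A$ at every $A\in(A^*,x)$ with a definite sign, so the mean value theorem already yields that $A\mapsto u_A(x)$ is nonincreasing there; no uniform-in-$A$ integrability estimates or integration of the derivative are needed, and this is all the paper's proof uses.
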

\begin{proof}
Because $g(x) = u_{A^*}(x)$ on $(-\infty,A^*]$, we only need to show (i) on $(A^*, \infty)$. 
For any $x > A^*$, we obtain by \eqref{equivalence_derivative_Phi} and \eqref{cont_fit_difference} that
\begin{align*}
u_{A^*}(x) \geq \lim_{A \uparrow x}u_A(x) = g(x) + W^{(q)}(0) \Psi(x).
\end{align*}
For the unbounded variation case, because $W^{(q)}(0)=0$, the result is immediate.  For the bounded variation case (which necessarily means $\sigma = 0$), \eqref{inequality_above_A_star} implies $\Psi(x) > 0$ and hence the result is also immediate.

\end{proof}


%

Regarding the condition (ii), 
integration by parts can be applied to obtain the following (see Section A.5 of \cite{Egami-Yamazaki-2010-1} for a complete proof).
\begin{lemma}[Egami and Yamazaki
\cite{Egami-Yamazaki-2010-1}] \label{lemma_w_h}
If $h$ is continuous on $(A^*, \infty)$, we have
\begin{align*}
(\mathcal{L} - q) \left[ \int_{A^*}^x W^{(q)}(x-y) h(y) \diff y \right] = h(x), \quad x
> A^*.
\end{align*}
\end{lemma}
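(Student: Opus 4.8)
The plan is to verify the identity $(\mathcal{L}-q)[\int_A^x W^{(q)}(x-y)h(y)\diff y] = h(x)$ for $x>A$ by applying the generator $\mathcal{L}$ directly to the convolution-type function
\[
F(x) := \int_A^x W^{(q)}(x-y) h(y) \diff y, \qquad x > A,
\]
and exploiting the fact, recorded in \eqref{w_generator_zero}, that $(\mathcal{L}-q)W^{(q)}(x)=0$ on $(0,\infty)$. First I would establish the regularity needed to differentiate under the integral sign: since $W^{(q)}$ is $C^1$ on $(0,\infty)$ under the standing assumption that $\Pi$ has no atoms (and $C^2$ when $\sigma>0$), and $h$ is locally bounded and measurable, one can compute $F'(x) = W^{(q)}(0)h(x) + \int_A^x W^{(q)'}(x-y) h(y)\diff y$ and, when $\sigma>0$, $F''(x) = W^{(q)}(0)h'(x) + W^{(q)'}(0+)h(x) + \int_A^x W^{(q)''}(x-y) h(y)\diff y$. (When $\sigma=0$, the $\sigma^2 f''/2$ term is absent, so only $F'$ is needed; one must only be careful that the reference proof in \cite{Egami-Yamazaki-2010-1} handles the bounded-variation reformulation, where $c$ is replaced by $\mu$ and the compensator term simplifies.)

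Next I would substitute these expressions into
\[
\mathcal{L}F(x) = c F'(x) + \tfrac12 \sigma^2 F''(x) + \int_0^\infty \big[ F(x-z) - F(x) + F'(x) z 1_{\{0<z<1\}}\big]\Pi(\diff z),
\]
and use Fubini/Tonelli to interchange the $\Pi(\diff z)$-integral with the $\diff y$-integral in the jump term, writing $F(x-z) = \int_A^{x-z} W^{(q)}(x-z-y)h(y)\diff y$ with the convention $W^{(q)}=0$ on $(-\infty,0)$ from \eqref{eq:at-zero}, so that in fact $F(x-z) = \int_A^x W^{(q)}(x-z-y)h(y)\diff y$ for all $z$. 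After the interchange, the bulk of the integrand reassembles into $\int_A^x h(y)\,(\mathcal{L}-q)_x W^{(q)}(x-y)\diff y$, where $(\mathcal{L}-q)_x$ acts in the first variable; by \eqref{w_generator_zero} this vanishes for every $y<x$ since then $x-y>0$. What survives are exactly the boundary contributions produced by the discontinuity of $W^{(q)}$ and its derivative at $0$: a term $c W^{(q)}(0) h(x)$ from $cF'$, a term $\tfrac12\sigma^2 W^{(q)'}(0+)h(x)$ from $\tfrac12\sigma^2 F''$, and — crucially — the mismatch in the jump integral coming from the fact that the small-jump compensator $F'(x)z1_{\{0<z<1\}}$ does not match the pointwise difference $F(x-z)-F(x)$ near the boundary region $y\in(x-z,x)$ where $W^{(q)}(x-z-y)=0$.

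The main obstacle, and where the computation must be done carefully rather than waved through, is bookkeeping this jump-term boundary contribution: for $z$ such that $x-z < x$, the integrand $F(x-z)-F(x)+F'(x)z1_{\{0<z<1\}}$ splits as (i) $\int_A^x h(y)[W^{(q)}(x-z-y)-W^{(q)}(x-y)+W^{(q)'}(x-y)z1_{\{0<z<1\}}]\diff y$, which integrates against $\Pi(\diff z)$ to give the jump part of $\int_A^x h(y)(\mathcal{L}-q)_xW^{(q)}(x-y)\diff y$, plus (ii) a correction because on $y\in(x-z,x)$ the first term $W^{(q)}(x-z-y)$ is zero while $W^{(q)}(x-y)$ is not. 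One must show that all these residual boundary pieces — the $cW^{(q)}(0)$ piece, the $\tfrac12\sigma^2 W^{(q)'}(0+)$ piece, the small-jump compensator residual, and the $-qF(x)$ piece — combine using the values \eqref{at_zero} of $W^{(q)}(0)$ and $W^{(q)'}(0+)$ (together with $\psi'(0+)$-type relations, or more directly the known ODE/jump-relation satisfied by $W^{(q)}$) to leave precisely $h(x)$, with the $\int_A^x h(y)(\mathcal{L}-q)_xW^{(q)}(x-y)\diff y = 0$ contribution discarded. Since the paper already cites \cite{Egami-Yamazaki-2010-1}, Section A.5, for the complete proof, I would present this as the structure and refer there for the delicate constant-matching, emphasizing only that the key mechanism is "convolution with the $(\mathcal{L}-q)$-harmonic function $W^{(q)}$ leaves a $\delta$-type boundary term equal to $h(x)$," exactly the classical renewal/resolvent identity behind the scale function.
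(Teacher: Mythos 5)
Your route (apply $\mathcal{L}-q$ directly to $F(x)=\int_A^x W^{(q)}(x-y)h(y)\diff y$, push the operator onto $W^{(q)}$ via \eqref{w_generator_zero}, and collect boundary terms at zero) differs from the paper's, which gives no computation at all but invokes integration by parts and defers to Section A.5 of \cite{Egami-Yamazaki-2010-1}; as a plan it is legitimate in spirit, but it has a concrete gap in the case of unbounded variation with $\sigma=0$, which the lemma must cover (it feeds into Proposition \ref{proposition_harmonic} for all spectrally negative $X$ in the paper's setup). In that case both boundary terms you identify vanish: $W^{(q)}(0)=0$ kills $cW^{(q)}(0)h(x)$ and there is no Gaussian term, so all of $h(x)$ must come from the jump part. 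But your description of the jump-part residual --- a ``mismatch'' on $y\in(x-z,x)$ where $W^{(q)}(x-z-y)=0$ --- is not a real mismatch: with the convention \eqref{eq:at-zero}, the pointwise identity $F(x-z)-F(x)+zF'(x)1_{\{0<z<1\}}=\int_A^x h(y)\bigl[W^{(q)}(x-y-z)-W^{(q)}(x-y)+zW^{(q)\prime}(x-y)1_{\{0<z<1\}}\bigr]\diff y$ holds exactly (up to the compensator piece $zW^{(q)}(0)h(x)$, which is zero here), so a valid Fubini interchange would return $(\mathcal{L}-q)F(x)=\int_A^x h(y)(\mathcal{L}-q)W^{(q)}(x-y)\diff y=0$ rather than $h(x)$. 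The point is that the Fubini/Tonelli step you invoke is exactly what breaks down: since $W^{(q)\prime}(0+)=\infty$ by \eqref{at_zero}, the double integral is not absolutely convergent near $y=x$ (for a spectrally negative $\alpha$-stable process, $\alpha\in(1,2)$, one has $W^{(q)}(v)\asymp v^{\alpha-1}$ near zero and the joint integral over $\{x-y<z\leq 1\}$ behaves like $\int_0^1 z^{\alpha}z^{-1-\alpha}\diff z=\infty$), and the term $h(x)$ can only be extracted through a careful truncation/limiting argument, or by the reference's integration-by-parts rearrangement, which moves the singularity onto smoother objects. Deferring the ``delicate constant-matching'' to A.5 of \cite{Egami-Yamazaki-2010-1} does not patch this, because that proof is a different computation from the one you outline.

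Two smaller points. Your formula for $F''$ contains $W^{(q)}(0)h'(x)$, which presupposes a differentiable $h$, whereas the paper assumes $h$ only locally bounded and measurable; this is harmless because you use $F''$ only when $\sigma>0$, where $W^{(q)}(0)=0$, but it should not be asserted in general. Similarly, when $\sigma>0$ with infinite-activity jumps, the convergence of $\int_A^x W^{(q)\prime\prime}(x-y)h(y)\diff y$ and of the compensated jump integral near $y=x$ (integrability of $W^{(q)\prime\prime}$ near $0$) needs justification that your outline does not supply.
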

By Lemma \ref{lemma_w_h}, we obtain the following.

\begin{proposition} \label{proposition_harmonic}
Suppose, on $(A^*, \infty)$, $f(x) := \E^x \left[ e^{-q \tau_{A^*}} g(X_{\tau_{A^*}})\right] = \Gamma_1(x;{A^*}) + \Gamma_2(x;{A^*})$ is $C^1$ (resp.\ $C^2$)  for the case $X$ is of bounded (resp.\ unbounded) variation, and $h$ is continuous.  Assume also when  $X$ is of unbounded variation with $\sigma = 0$ that $W^{(q)}$ is $C^2$ on $(0,\infty)$. Then  $(\mathcal{L}-q) u_{A^*}(x) + h(x) = 0$  for any $x > A^*$.
\end{proposition}
\begin{proof}
See Appendix \ref{proof_proposition_harmonic}.
\end{proof}

\section{Examples} \label{section_extension}
In this section, we give examples to illustrate how we can apply the results obtained in the previous sections.  We first consider, as a warm-up, a generalized version of the McKean optimal stopping problem with additional running rewards.  We then extend Egami and Yamazaki \cite{Egami-Yamazaki-2010-1} and obtain analytical solutions.  We also give a brief review of Surya and Yamazaki \cite{Surya_Yamazaki_2012} and Yamazaki \cite{Yamazaki_2012} where the main results of this paper are directly applied.

\subsection{The McKean optimal stopping}   \label{subsection_McKean}
The classical McKean optimal stopping problem, also known as the pricing of a perpetual American put option, reduces to \eqref{general_problem} with $g(x) = K - e^x$ and $h \equiv 0$.  Here, $e^X$ models the stock price and $K > 0$ is the strike price; the option holder chooses a time to exercise so as to maximize the expected payoff.  
In particular, for the spectrally negative case, it has been shown by \cite{Avram_2002} that the optimal threshold level is given by (when $\psi(1) \neq q$)
\begin{align}
A^* = \log \Big( K \frac q {\Phi(q)} \frac {\Phi(q)-1} {q-\psi(1)}\Big). \label{optimal_level_american_option}
\end{align}

We consider a more general case where $h$ is any non-decreasing and continuous function and give a simple proof by directly using the results obtained in the previous sections.   Here we assume that $\psi(1) \neq q$ (or $\Phi(q) \neq1$); the case of $\psi(1) = q$ can be obtained by taking limits on the results described below (see \cite{Yamazaki_2012} for details).
Because
\begin{align*}
-g(A) \frac q {\Phi(q)} + \rho_{g,A}^{(q)} &= -\frac q {\Phi(q)}(K-e^A) + \int_0^\infty \Pi (\diff u)  \int_0^{u} e^{-\Phi(q) z} (e^A - e^{z+A-u})\diff z \\
&= -\frac q {\Phi(q)}K + e^A \Big[ \frac q {\Phi(q)} + \int_0^\infty \Pi (\diff u) \Big(  \frac {1 - e^{-\Phi(q) u}} {\Phi(q)} - e^{-u} \frac {1 - e^{-(\Phi(q)-1) u}} {\Phi(q) - 1}\Big) \Big],
\end{align*}
we obtain
\begin{align}
\Psi(A) - \frac {\sigma^2} 2 g'(A) = -\frac q {\Phi(q)}K + \frac {e^A} {\Phi(q)} M_q + \int_0^\infty e^{-\Phi(q) y} h(y+A) \diff y \label{Psi_american}
\end{align}
where
\begin{align*}
M_q &:= q + \frac {\sigma^2} 2 \Phi(q) + \int_0^\infty \Pi (\diff u) \Big[  (1 - e^{-\Phi(q) u}) - e^{-u} (1 - e^{-(\Phi(q)-1) u}) \frac {\Phi(q)} {\Phi(q) -1}\Big].
\end{align*}
Here, by the change of measure, $M_q$ can be simplified.
\begin{lemma} \label{lemma_M_q}
We have $M_q = \frac {\Phi(q)} {\Phi(q) - 1} (q- \psi(1))$.
\end{lemma}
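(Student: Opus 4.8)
The plan is to compute $M_q$ directly from its definition by recognizing each piece in terms of the Laplace exponent $\psi$ evaluated at $\Phi(q)$ and at $1$. Recall that $\psi(\Phi(q)) = q$ by definition of $\Phi$, and that $\psi(1) = c + \tfrac12\sigma^2 + \int_{(0,\infty)}(e^{-z}-1+z1_{\{0<z<1\}})\,\Pi(\diff z)$. The key is to split the integrand in $M_q$ so that the terms $(1-e^{-\Phi(q)u})$ and $e^{-u}(1-e^{-(\Phi(q)-1)u})\tfrac{\Phi(q)}{\Phi(q)-1}$ can be matched, after multiplying $M_q$ by the factor $\tfrac{\Phi(q)-1}{\Phi(q)}$, against the jump-measure integrals appearing in $\psi(\Phi(q)) - \psi(1) = q - \psi(1)$.

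First I would write out $\tfrac{\Phi(q)-1}{\Phi(q)} M_q$ explicitly: the $q$ term becomes $\tfrac{\Phi(q)-1}{\Phi(q)} q$, the Gaussian term becomes $\tfrac{\sigma^2}{2}(\Phi(q)-1)$, and the integral term becomes $\int_0^\infty \Pi(\diff u)\big[\tfrac{\Phi(q)-1}{\Phi(q)}(1-e^{-\Phi(q)u}) - e^{-u}(1-e^{-(\Phi(q)-1)u})\big]$. Next I would compare this against $q - \psi(1) = \psi(\Phi(q)) - \psi(1)$, using the representation \eqref{laplace_exp} for both $\psi(\Phi(q))$ and $\psi(1)$. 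The linear-drift terms contribute $c(\Phi(q)-1)$, the Gaussian terms contribute $\tfrac12\sigma^2(\Phi(q)^2 - 1) = \tfrac12\sigma^2(\Phi(q)-1)(\Phi(q)+1)$ — wait, this is where one must be careful, since the claimed answer has $\tfrac{\sigma^2}{2}(\Phi(q)-1)$ from $M_q$ not $\tfrac{\sigma^2}{2}(\Phi(q)-1)(\Phi(q)+1)$; the resolution is that the compensator terms $\beta z 1_{\{0<z<1\}}$ in \eqref{laplace_exp} together with the linear drift $c\beta$ reorganize so that, after the subtraction, the $c$ and compensator contributions combine with the jump integrals in exactly the way that produces the stated identity. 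Concretely, using the bounded-variation-style rewriting is not available in general, so I would instead keep everything in the form \eqref{laplace_exp} and verify term by term that $\psi(\Phi(q)) - \psi(1)$ equals $c(\Phi(q)-1) + \tfrac12\sigma^2(\Phi(q)^2-1) + \int_{(0,\infty)}\big[(e^{-\Phi(q)z}-e^{-z}) + (\Phi(q)-1)z1_{\{0<z<1\}}\big]\Pi(\diff z)$, and then show this matches $\tfrac{\Phi(q)-1}{\Phi(q)} M_q$.

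The main obstacle will be bookkeeping the $\sigma^2$ and drift/compensator terms correctly: the naive expansion of $\psi(\Phi(q)) - \psi(1)$ produces a Gaussian term proportional to $\Phi(q)^2 - 1$, whereas $M_q$ only carries $\tfrac{\sigma^2}{2}\Phi(q)$, so the discrepancy must be absorbed by the jump integral. The cleanest way to handle this is probably to note that in the definition of $M_q$ the jump integral $\int_0^\infty \Pi(\diff u)(1-e^{-\Phi(q)u})$ is only conditionally meaningful (it may diverge separately when $\Pi$ is infinite), so one should combine it with the $e^{-u}$ term before integrating, exactly as written, and simultaneously regroup $\psi(\Phi(q)) - \psi(1)$ so that the compensator terms $\beta z 1_{\{0<z<1\}}$ are distributed into the convergent combinations $(e^{-\Phi(q)z} - 1 + \Phi(q)z1_{\{0<z<1\}}) - (e^{-z}-1+z1_{\{0<z<1\}})$. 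Matching these convergent pieces against $\tfrac{\Phi(q)-1}{\Phi(q)}(1-e^{-\Phi(q)u}) - e^{-u}(1-e^{-(\Phi(q)-1)u})$ after clearing the common factor, together with the elementary algebraic identities for the polynomial-in-$\Phi(q)$ coefficients, yields the claim. I would present this as a direct computation, flagging that $\psi(\Phi(q)) = q$ is used at the very end to replace the $\psi(\Phi(q))$ that reappears.
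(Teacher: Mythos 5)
Your plan is sound and, executed as written, it does prove the lemma: after multiplying $M_q$ by $\tfrac{\Phi(q)-1}{\Phi(q)}$ and subtracting the compensated expansion of $\psi(\Phi(q))-\psi(1)$, the difference collapses to $\tfrac{\Phi(q)-1}{\Phi(q)}\bigl[q-\psi(\Phi(q))\bigr]=0$, with all jump integrands kept in convergent combinations exactly as you insist. One small correction to your narrative: the Gaussian mismatch (you get $\tfrac{\sigma^2}{2}(\Phi(q)^2-1)$ on one side versus $\tfrac{\sigma^2}{2}(\Phi(q)-1)$ on the other) is not absorbed by the jump integral; it is supplied by the term $\tfrac{\Phi(q)-1}{\Phi(q)}q$ once you substitute $q=\psi(\Phi(q))$, which simultaneously produces the drift $c(\Phi(q)-1)$, the extra $\tfrac{\sigma^2}{2}\Phi(q)(\Phi(q)-1)$, and the compensated jump piece $(\Phi(q)-1)\int_0^\infty\bigl(e^{-\Phi(q)u}-1+\Phi(q)u1_{\{0<u<1\}}\bigr)\Pi(\diff u)/\Phi(q)$ — so the substitution you defer to ``the very end'' is in fact the engine of the cancellation. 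Your route differs from the paper's in packaging: the paper adds and subtracts the compensator $\Phi(q)u1_{\{u\in(0,1)\}}$ inside $M_q$ so as to recognize $M_q=\tfrac{\Phi(q)}{\Phi(q)-1}\psi_1(\Phi(q)-1)$, where $\psi_1$ is the Laplace exponent under the exponential change of measure $\left.\tfrac{\diff\p_1}{\diff\p}\right|_{\F_t}=e^{X_t-\psi(1)t}$, and then quotes the standard tilting identity $\psi_1(\Phi(q)-1)=\psi(\Phi(q))-\psi(1)=q-\psi(1)$. The paper's approach buys shorter bookkeeping by citing a known identity (and the object $\psi_1$ reappears later in the $W_1$, $Z_1$ formulas), whereas yours is more elementary and self-contained, essentially re-deriving that Esscher identity by hand at the cost of the careful compensator regrouping you correctly flag.
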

\begin{proof}
By the definition of $\psi$ and $\Phi$, we rewrite $M_q$ as
\begin{align*}
\begin{split}
 & q + \frac {\sigma^2} 2 \Phi(q) + \int_0^\infty \Pi (\diff u) \left[  (1 - e^{-\Phi(q) u} - \Phi(q) u 1_{\{ u \in (0,1) \}})  - e^{-u} (1 - e^{-(\Phi(q)-1) u}) \frac {\Phi(q)} {\Phi(q) -1} + \Phi(q) u 1_{\{ u \in (0,1) \}}\right] \\
&= \Big( c - \int_0^1 u (e^{-u}-1) \Pi(\diff u) \Big) \Phi(q) + \frac {\sigma^2} 2 \Phi(q) (\Phi(q) + 1 ) \\ &\qquad - \frac {\Phi(q)} {\Phi(q) -1} \int_0^\infty \Pi (\diff u) e^{-u} \left( 1 - e^{-(\Phi(q)-1) u} + (\Phi(q)-1) u 1_{\{ u \in (0,1) \}} \right).
\end{split} 
\end{align*}
Define, as the Laplace exponent of $X$ under $\p_1$ with the change of measure $ \left. \frac {\diff \p_1} {\diff \p}\right|_{\mathcal{F}_t} = \exp(X_t - \psi(1) t)$, $t \geq 0$,
\begin{align*}
\psi_1(\beta) :=\Big(  \sigma^2  + c - \int_0^1 u (e^{-u}-1) \Pi(\diff u)\Big) \beta
+\frac{1}{2}\sigma^2 \beta^2 +\int_0^\infty (e^{- \beta u}-1 + \beta u 1_{\{ u \in (0,1) \}}) e^{-u}\,\Pi(\diff u).
\end{align*}
Then, $\psi_1(\Phi(q)-1) = \psi(\Phi(q)) - \psi(1) = q - \psi(1)$; see page 215 of \cite{Kyprianou_2006}.
Hence simple algebra shows
\begin{align*}
\frac {\Phi(q)} {\Phi(q) - 1} (q- \psi(1))  &= \frac {\Phi(q)} {\Phi(q) - 1} \psi_1(\Phi(q)-1) = M_q,
\end{align*}
as desired.
\end{proof}
It is clear that $M_q > 0$ and hence \eqref{Psi_american} is monotonically increasing in $A$.  Recall also Assumption \ref{assump_h}. Therefore on condition that 
\begin{align}
\lim_{A \downarrow -\infty}\Big[\Psi(A) - \frac {\sigma^2} 2 g'(A) \Big] = -\frac q {\Phi(q)}K + \lim_{A \downarrow -\infty} \int_0^\infty e^{-\Phi(q) y} h(y+A) \diff y < 0,
\label{cond_american}
\end{align}
there exists a unique $A^*$ such that  \eqref{Psi_american} vanishes and by \eqref{equivalence_derivative_Phi}
\begin{align}
\frac \partial {\partial A} u_A(x)  \geq 0 \quad  \forall x > A \Longleftrightarrow A \leq A^*. \label{american_derivative}
\end{align}
This shows that $\tau_{A^*}$ is optimal among the set of all stopping times of threshold type.  Notice in a special case
 when $h \equiv 0$, the optimal threshold $A^*$ reduces to \eqref{optimal_level_american_option}.  Because the optimal stopping time is known to be of threshold type by \cite{Mordecki_2002}, $\tau_{A^*}$ is indeed the optimal stopping time.

We now show that it is indeed optimal over all stopping times even when $h$ is not zero.  This reduces to showing (iii) (in Section \ref{subsection_obtaining}) because (i) holds thanks to \eqref{american_derivative} and Lemma \ref{lemma_about_i} and (ii) thanks to Proposition \ref{proposition_harmonic} and the smoothness of the value function given in Proposition \ref{proposition_value_mckean} below.  For this special case of $g$, we can simplify as in Exercise 8.7 (ii) and Corollary 9.3  of \cite{Kyprianou_2006} for any $x, A \in \R$,
\begin{multline}
\E^x \left[ e^{-q \tau_A} (K-e^{X_{\tau_A}}) \right]  = K \Big(Z^{(q)}(x-A) - \frac q {\Phi(q)}  W^{(q)}(x-A) \Big) \\ -e^x \Big(Z_1^{(q-\psi(1))} (x-A) -\frac {q-\psi(1)} {\Phi(q)-1}  W_1^{(q-\psi(1))}(x-A) \Big) \label{u_A_special}
\end{multline}
where $W_1$ and $Z_1$ are  versions of $W$ and $Z$ associated with the measure $\p_1$ under the same change of measure as in the proof of Lemma \ref{lemma_M_q}.  Here, notice as in Lemmas 8.3 and 8.5 of  \cite{Kyprianou_2006},  for each $x > 0$, the functions $q \mapsto W^{(q)}(x)$ and $q \mapsto Z^{(q)}(x)$ can be analytically extended to $q \in \mathbb{C}$.  In particular,  by Lemma 8.4 of \cite{Kyprianou_2006},
\begin{align}
e^x W_1^{(q-\psi(1))}(x) = W^{(q)}(x), \quad x \geq 0. \label{scale_measure_change}
\end{align}

\begin{proposition}  \label{proposition_value_mckean} Suppose $h$ is non-decreasing and continuous and satisfies Assumption \ref{assump_h} and \eqref{cond_american}.  Then there exists a unique $A^*$ such that \eqref{Psi_american} vanishes.  Moreover, $\tau_{A^*}$ is an optimal stopping time and the optimal value function is given by 
\begin{align*}
u_{A^*}(x) = K Z^{(q)}(x-A^*) - e^x Z_1^{(q-\psi(1))} (x-A^*)  -
\int_{A^*}^x W^{(q)} (x-y) h(y) \diff y.
\end{align*}
\end{proposition}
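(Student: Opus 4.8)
The plan is to run the verification scheme of Section~\ref{subsection_obtaining}, since the main ingredients are already available. First I would settle existence and uniqueness of $A^*$: by Lemma~\ref{lemma_M_q} the right-hand side of \eqref{Psi_american} equals $-\frac{q}{\Phi(q)}K+\frac{M_q}{\Phi(q)}e^{A}+\int_0^\infty e^{-\Phi(q)y}h(y+A)\,\diff y$, and since $M_q>0$ and $h$ is non-decreasing this is strictly increasing in $A$; writing $\int_0^\infty e^{-\Phi(q)y}h(y+A)\,\diff y=e^{\Phi(q)A}\int_A^\infty e^{-\Phi(q)w}h(w)\,\diff w$ shows it is also continuous, it tends to $+\infty$ as $A\uparrow\infty$, and by \eqref{cond_american} it is negative as $A\downarrow-\infty$. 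Hence there is a unique zero $A^*$, it satisfies \eqref{first_order_cond}, and \eqref{american_derivative} follows from \eqref{equivalence_derivative_Phi}. (The hypotheses of Lemmas~\ref{lemma_lipschitz} and \ref{lemma_derivative_A} apply here because $g(x)=K-e^{x}$ is smooth and its increments over bounded left-neighbourhoods are dominated by a constant times $e^{x}$.)

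Next I would verify conditions (i)--(iii) of Section~\ref{subsection_obtaining}. Condition~(i) follows from Lemma~\ref{lemma_about_i}, whose hypothesis \eqref{inequality_above_A_star} is exactly the strict monotonicity just established, and (ii) follows from Proposition~\ref{proposition_harmonic}; the real content is (iii). For $x<A^*$ one has $u_{A^*}\equiv g$ on $(-\infty,A^*)$, and since $\mathcal{L}f(x)$ involves $f$ only through $f(x),f'(x),f''(x)$ and the values $f(x-z)$, $z>0$ --- all at points $\le x<A^*$ --- it follows that $(\mathcal{L}-q)u_{A^*}(x)+h(x)=(\mathcal{L}-q)g(x)+h(x)$, which a short computation using $\mathcal{L}e^{x}=\psi(1)e^{x}$ identifies with $G(x):=-qK+(q-\psi(1))e^{x}+h(x)$. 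The key estimate is the pointwise bound
\begin{align*}
G(A)\ \le\ \Phi(q)\Bigl(\Psi(A)-\frac{\sigma^2}{2}g'(A)\Bigr),\qquad A\in\R,
\end{align*}
obtained by comparing the two sides term by term: $\int_0^\infty e^{-\Phi(q)y}h(y+A)\,\diff y\ge h(A)/\Phi(q)$ since $h$ is non-decreasing, and $\frac{(q-\psi(1))e^{A}}{\Phi(q)-1}-\frac{(q-\psi(1))e^{A}}{\Phi(q)}=\frac{M_q e^{A}}{\Phi(q)^{2}}>0$ since $M_q>0$. Because the right-hand side is strictly negative for $A<A^*$, this yields $(\mathcal{L}-q)u_{A^*}(x)+h(x)<0$ on $(-\infty,A^*)$, which is (iii).

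Having (i)--(iii), I would then read off the value function. For $x>A^*$, $u_{A^*}(x)=\Gamma_1(x;A^*)+\Gamma_2(x;A^*)+\Gamma_3(x;A^*)$, with $\Gamma_1+\Gamma_2$ supplied by \eqref{u_A_special} and $\Gamma_3$ by Lemma~\ref{lemma_gamma_3}; using \eqref{scale_measure_change} in the form $e^{x}W_1^{(q-\psi(1))}(x-A^*)=e^{A^*}W^{(q)}(x-A^*)$, all $W^{(q)}(x-A^*)$ terms collect with coefficient $-\frac{q}{\Phi(q)}K+\frac{(q-\psi(1))e^{A^*}}{\Phi(q)-1}+\int_0^\infty e^{-\Phi(q)y}h(y+A^*)\,\diff y$, which by Lemma~\ref{lemma_M_q} is precisely $\Psi(A^*)-\frac{\sigma^2}{2}g'(A^*)=0$; this gives the stated formula for $x>A^*$, and for $x\le A^*$ the same expression reduces to $K-e^{x}=g(x)$ via \eqref{eq:at-zero} ($Z^{(q)}\equiv1$ and $Z_1^{(q-\psi(1))}\equiv1$ on $(-\infty,0]$, $W^{(q)}\equiv0$ on $(-\infty,0)$). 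Since $u_{A^*}$ also satisfies the continuous/smooth fit at $A^*$ (by \eqref{first_order_cond} and Propositions~\ref{lemma_continuous_fit}--\ref{lemma_smooth_fit2}) and $g$ is bounded above by $K$, the standard verification argument (see, e.g., \cite{sulem}) then gives $u=u_{A^*}$ and the optimality of $\tau_{A^*}$.

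I expect the single delicate point to be (iii): the first-order relation \eqref{Psi_american} controls only the weighted average $\int_0^\infty e^{-\Phi(q)y}h(y+A)\,\diff y$ of $h$, whereas (iii) is a pointwise statement about $(\mathcal{L}-q)g+h$, and the displayed bound --- which is where the monotonicity of $h$ and the positivity of $M_q$ enter --- is exactly the bridge between them. The rest (the algebraic collapse in the value-function step, the regularity and integrability bookkeeping needed for the verification theorem) is routine given the results already proved.
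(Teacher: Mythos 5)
Your proposal is correct and follows essentially the same route as the paper: existence and uniqueness of $A^*$ from the strict monotonicity of \eqref{Psi_american} together with \eqref{cond_american}, verification of (i)--(iii) with (iii) as the only substantive step, and the same algebraic collapse of \eqref{u_A_special} via \eqref{american_verification_2} and \eqref{scale_measure_change} to get the stated value function. Your step (iii) is only a cosmetic variant of the paper's: you prove the uniform pointwise bound $(\mathcal{L}-q)g(A)+h(A)\le \Phi(q)\bigl(\Psi(A)-\tfrac{\sigma^2}{2}g'(A)\bigr)$ and invoke monotonicity, whereas the paper compares $x<A^*$ directly against the first-order equation at $A^*$ (its \eqref{american_verification_3}--\eqref{american_verification_4}), but both rest on exactly the same two ingredients, the monotonicity of $h$ and $M_q>0$.
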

\begin{proof}
By Lemma \ref{lemma_M_q} and the discussion above this proposition, there exists a unique $A^*$ such that
\begin{align}
0 = -q K + e^{A^*} \frac {\Phi(q)} {\Phi(q) - 1} (q- \psi(1))  + \Phi(q) \int_0^\infty e^{-\Phi(q) y} h(y+A^*) \diff y. \label{american_verification_2}
\end{align}
By \eqref{u_A_special}-\eqref{american_verification_2},
\begin{align*}
u_{A^*}(x) &=K Z^{(q)}(x-A^*)  -e^x \left(Z_1^{(q-\psi(1))} (x-A^*) -W_1^{(q-\psi(1))}(x-A^*) \frac {q-\psi(1)} {\Phi(q)-1}\right) \\
&\qquad - W^{(q)}(x-A^*) e^{A^*} \frac {q-\psi(1)} {\Phi(q)-1}- \int_{A^*}^x W^{(q)}(x-y) h(y) \diff y \\
&= K Z^{(q)}(x-A^*)  -e^x Z_1^{(q-\psi(1))} (x-A^*)  - \int_{A^*}^x W^{(q)}(x-y) h(y) \diff y.
\end{align*}
Notice that $u_{A^*}$ is $C^1$ (resp.\ $C^2$) on $\R \backslash \{A^*\}$ and $C^0$ (resp.\ $C^1$) at $A^*$ when $X$ is of bounded (resp.\ unbounded) variation; see the proof of Lemma 4.5 of \cite{Egami-Yamazaki-2010-1} for the transformation of the integral term. 

We shall show (iii).  By \eqref{laplace_exp},
$\mathcal{L} g(x) = -e^x \left[ c + \frac 1 2 \sigma^2 + \int_0^\infty \left[ e^{-z} - 1 +  z 1_{\{0 < z < 1\}} \right] \Pi(\diff z) \right] = -e^x \psi(1)$,
and hence
\begin{align}
(\mathcal{L}-q) g(x) + h(x) = - qK + e^x (q - \psi(1)) + h(x). \label{american_verification_1}
\end{align}
Because $h$ is non-decreasing and $x < A^*$
\begin{align}
\Phi(q) \int_0^\infty e^{-\Phi(q) y} h(y+A^*) \diff y \geq \Phi(q) \int_0^\infty e^{-\Phi(q) y} h(x) \diff y = h(x). \label{american_verification_3}
\end{align}
It is also easy to see that
\begin{align}
e^{A^*}\frac {\Phi(q)} {\Phi(q) - 1} (q- \psi(1)) \geq e^x (q - \psi(1)).  \label{american_verification_4}
\end{align}
Indeed, for the case $q - \psi(1) > 0$, we must have $\Phi(q)-1 > 0$ and hence \eqref{american_verification_4} holds by $A^* > x$; for the case $q - \psi(1) < 0$, the left hand side is positive while the right hand side is negative in \eqref{american_verification_4}.
By \eqref{american_verification_2}-\eqref{american_verification_4}, (iii) holds.

 This together with (i) and (ii) shows the optimality using a standard technique of optimal stopping; see \cite{Yamazaki_2012} for the rest of the proof of optimality.

\end{proof}



\subsection{Generalization of Egami and Yamazaki \cite{Egami-Yamazaki-2010-1} }
We now solve an extension to \cite{Egami-Yamazaki-2010-1}, where
we obtained an alarm system that determines when a bank needs to start enhancing its own capital ratio so as not to violate the capital adequacy requirements. Here $X$ models the bank's  net worth or equity capital allocated to its loan/credit business.  The problem is to strike the balance between minimizing the chance of violating the net capital requirement and the costs of premature undertaking (or the regret) measured, respectively, by
\begin{align*}
R_x^{(q)}(\tau) := \E^x \left[ e^{-q \theta} 1_{\{\tau \geq \theta, \theta < \infty \}}\right] \quad \textrm{and} \quad H_x^{(q,h)} (\tau) := \E^x \left[ 1_{\{\tau < \infty\}} \int_\tau^\theta e^{-q t} h(X_t) \diff t \right]
\end{align*}
where $h$ is positive, continuous and increasing, and
\begin{align*}
\theta := \inf \left\{ t \geq 0: X_t \leq 0 \right\}
\end{align*}
denotes the capital requirement violation time.
We want to obtain over the set of stopping times,
\begin{align}
\S := \left\{ \tau \; \textrm{stopping time}: \tau \leq \theta \; a.s. \right\}, \label{s_precautionary}
\end{align}
an optimal stopping time that minimizes the linear combination of the two costs described above:
\begin{align*}
U_x^{(q,h)}(\tau,\gamma) := R_x^{(q)}(\tau) + \gamma H_x^{(q,h)} (\tau),
\end{align*}
for some $\gamma > 0$.
By taking advantage of the property of $\S$, the problem can be reduced to obtaining
\begin{align*}
\inf_{\tau \in \S}\E^x \left[ e^{-q \tau} 1_{\{X_\tau \leq 0, \tau < \infty \}} + \int_\tau^\theta e^{-q t} h(X_t) \diff t \right] = -u(x) + \E^x \left[ \int_0^\theta e^{-q t} h(X_t) \diff t \right]
\end{align*}
with
\begin{align*}
u(x) := \sup_{\tau \in \S}\E^x \left[ - e^{-q \tau} 1_{\{X_\tau \leq 0, \tau < \infty \}} + \int_0^\tau e^{-q t} h(X_t) \diff t \right].
\end{align*}
In other words, the problem reduces to \eqref{general_problem} with
\begin{align*}
  g(x)=\begin{cases}
    0, & x>0,\\
    -1, & x\le 0,
  \end{cases}
\end{align*}
and a special set of stopping times defined in \eqref{s_precautionary}.  Egami and Yamazaki \cite{Egami-Yamazaki-2010-1} solved for double exponential jump diffusion \cite{Kou_Wang_2003} and for a general spectrally negative \lev process.

We shall consider its extension for a more general $g$ (or more general $R_x^{(q)}(\tau) := -\E^x \left[ e^{-q \theta} g(X_\theta) 1_{\{ \tau \geq \theta, \theta < \infty \}}\right]$) by assuming the following.
\begin{spec}\normalfont\label{assumption_g_h}
\begin{enumerate}
  \item $g$ is negative and increasing on $(-\infty,0]$ (and zero on
  $(0,\infty)$) and satisfies, for $A > 0$, the assumptions in Proposition \ref{integrability_sup};
  \item $h$ is positive, continuous and increasing and satisfies Assumption \ref{assump_h}.
\end{enumerate}
\end{spec}
The first assumption on $g$ means that the
penalty $|g(X_\theta)|$ increases as the overshoot $|X_\theta|$
increases.  The second assumption on $h$ is the same as in \cite{Egami-Yamazaki-2010-1}; if a bank has a higher capital value, then it naturally has better access to high quality assets.

In this problem, it can be conjectured that there exists a threshold level $A^*$ such that $\tau_{A^*}$ is optimal.  
Here we can rewrite \eqref{def_rho} for all $A > 0$
\begin{align*}
\rho_{g,A}^{(q)}&=\int_A^\infty \Pi (\diff u)  \int_0^{u-A} e^{- \Phi(q) y}
g(y+A-u)\diff y, \\
\overline{\rho}_{g,A}^{(q)}&=\int_A^\infty \Pi (\diff u)  \int_0^{u-A} e^{- \Phi(q) y}
|g(y+A-u)| \diff y.
\end{align*}
This avoids the integration of $\Pi$ in the neighborhood of zero and hence Lemma \ref{lemma_for_smooth_fit} also holds for the case of unbounded variation with $\sigma = 0$.  Now, as a special case of Propositions \ref{lemma_continuous_fit}-\ref{lemma_smooth_fit2} (noticing $g(A) = g'(A) = 0$ for all $A > 0$), we obtain the following.

\begin{lemma}[Continuous and Smooth Fit]
Suppose (1)-(2) of Lemma \ref{lemma_lipschitz} for a given $A > 0$.
\begin{description}
\item[continuous fit] If $X$ is of bounded variation, the continuous fit condition $u_A(A+) = 0$ holds if and only if
\begin{align}
\Psi(A) = 0. \label{condition_precautionary}
\end{align}
If $X$ is of unbounded variation, it is automatically satisfied.
\item[smooth fit]  If $X$ is of unbounded variation, the smooth fit condition $u_A'(A+) = 0$ holds if and only if
\eqref{condition_precautionary} holds.
\end{description}
\end{lemma}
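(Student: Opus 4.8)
The plan is to read this lemma off from the general continuous- and smooth-fit results of Section~\ref{sec:fit}. The only input specific to Specification~\ref{assumption_g_h} is that $g(A)=0$ for every $A>0$ and, since $g$ vanishes identically on a neighborhood of any such $A$, also $g'(A)=0$; in particular hypothesis (1) of Lemma~\ref{lemma_lipschitz} is automatic and (2) is the standing assumption, so every representation of Sections~\ref{sec:problem}--\ref{sec:fit} is available. I would first record these reductions and then treat the three regimes in turn.

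For continuous fit I would substitute $g(A)=0$ into \eqref{cont_fit_difference}, obtaining $u_A(A+)=W^{(q)}(0)\,\Psi(A)$: by \eqref{at_zero}, $W^{(q)}(0)=1/\mu\in(0,\infty)$ in the bounded variation case, so $u_A(A+)=0$ if and only if $\Psi(A)=0$, whereas $W^{(q)}(0)=0$ in the unbounded variation case, so continuous fit is automatic; this is Proposition~\ref{lemma_continuous_fit} specialized. For smooth fit when $\sigma>0$, Proposition~\ref{lemma_smooth_fit2} with $g'(A)=0$ already gives the equivalence of $u_A'(A+)=0$ with \eqref{condition_precautionary}.

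The substantive step, and the one I expect to be the main obstacle, is smooth fit for $X$ of unbounded variation with $\sigma=0$, a regime deliberately excluded from Proposition~\ref{lemma_smooth_fit2} because there $W^{(q)'}(0+)=\infty$ and the interchanges of derivative, integral and limit underlying Lemma~\ref{lemma_for_smooth_fit} are not a priori legitimate. I would argue they remain valid here: in the rewritten expressions for $\rho_{g,A}^{(q)}$ and $\varphi_{g,A}^{(q)}$ displayed just above the statement, the $\Pi$-integration runs only over $(A,\infty)$, which carries finite $\Pi$-mass, and for each such $u$ the inner $z$-integral is over an interval shrinking to $\{0\}$ as $x\downarrow A$; moreover $W^{(q)'}$, though unbounded at the origin, is integrable there since $\int_0^\varepsilon W^{(q)'}(w)\,\diff w=W^{(q)}(\varepsilon)\downarrow W^{(q)}(0)=0$. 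These two facts let me reproduce the dominated-convergence bounds in the appendix proof of Lemma~\ref{lemma_for_smooth_fit} — the portion of the $(u,z)$-domain feeding the $W^{(q)'}$ singularity is controlled by $W^{(q)}(x-A)$ times a finite $\Pi$-integral bounded via hypothesis (2) of Lemma~\ref{lemma_lipschitz} — so \eqref{varrho_derivative_g} and \eqref{varrho_derivative_convergence_g} continue to hold.

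With Lemma~\ref{lemma_for_smooth_fit} available I would finish as follows. Using $g(A)=0$ in Lemma~\ref{lemma_gamma_1} (whence $\Gamma_1\equiv0$), Lemma~\ref{lemma_gamma_3}, \eqref{gamma_2_split}, and the identity $\rho_{g,A}^{(q)}+\int_0^\infty e^{-\Phi(q)y}h(y+A)\,\diff y=\Psi(A)$, differentiation in $x$ yields, for $x>A$,
\[
u_A'(x)=W^{(q)'}(x-A)\,\Psi(A)-\varphi_{g,A}^{(q)'}(x)-\int_A^x W^{(q)'}(x-y)\,h(y)\,\diff y .
\]
Letting $x\downarrow A$, the second term vanishes by \eqref{varrho_derivative_convergence_g} and the third by integrability of $W^{(q)'}$ near $0$ together with local boundedness of $h$, while $W^{(q)'}(x-A)\uparrow\infty$; hence $u_A'(A+)=0$ when $\Psi(A)=0$ and $u_A'(A+)=\pm\infty$ otherwise. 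Since $g'(A)=0$, this is exactly the asserted equivalence, and collecting the three regimes completes the proof.
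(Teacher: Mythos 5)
Your proposal is correct and follows essentially the same route as the paper: specialize Propositions \ref{lemma_continuous_fit}--\ref{lemma_smooth_fit2} using $g(A)=g'(A)=0$ for $A>0$, and observe that because the rewritten $\rho_{g,A}^{(q)}$, $\varphi_{g,A}^{(q)}$ only involve $\Pi$ on $(A,\infty)$ (finite mass), Lemma \ref{lemma_for_smooth_fit} extends to the unbounded variation case with $\sigma=0$. In fact you spell out more than the paper does, which compresses the $\sigma=0$ extension and the limit $u_A'(A+)=W^{(q)'}(0+)\Psi(A)$ (with the bound $|\varphi_{g,A}^{(q)'}(x)|$ and $\int_A^x W^{(q)'}(x-y)h(y)\diff y$ controlled by $W^{(q)}(x-A)\to 0$) into a single remark before the lemma.
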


\begin{table}[htbp]
\begin{tabular}{|c|c| c|}
  \hline
       & Continuous fit & Smooth fit\\
\hline
(i) bounded var.   & $\Psi(A)=0$ &  N/A\\
 (ii) unbounded var.  & Automatically satisfied  & $\Psi(A)=0$  \\
  \hline
\end{tabular}

\caption{Summary of Continuous- and Smooth-fit Conditions.}
\end{table}

%
Under Specification \ref{assumption_g_h}, there exists at most one
$A^* > 0$ that satisfies \eqref{condition_precautionary} because
\begin{multline}
\Psi'(A) = \int_0^\infty e^{- \Phi(q) y}   h'(y+A) \diff y \\ + \int_A^\infty \Pi (\diff u)  \int_0^{u-A} e^{- \Phi(q) y}   g'(y+A-u)\diff y - \int_A^\infty \Pi (\diff u)  e^{- \Phi(q) (u-A)}   g(0-) > 0. \label{monotonicity_precautionary}
\end{multline}
\\
\noindent \textbf{Verification of optimality: } We let $A^*$ be the
unique root of $\Psi(A)=0$ if it exists and set it zero otherwise.  The optimality over $\S$ holds under the following assumption.
\begin{assump}
$W^{(q)}$ is $C^2$ on $(0,\infty)$ for the case $X$ is of unbounded variation with $\sigma = 0$.
\end{assump}
As in the case of the McKean optimal stopping problem, we only need to show (iii) in Section \ref{subsection_obtaining} because (i) holds by \eqref{monotonicity_precautionary} and Lemma \ref{lemma_about_i} and (ii) by Proposition \ref{proposition_harmonic} and the assumption above.

\begin{lemma}
If $A^* > 0$, we have $(\mathcal{L}-q) g(x) + h(x) \leq 0 $ for every $x \in (0,A^*)$.
\end{lemma}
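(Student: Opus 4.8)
The plan is to exploit the explicit form of $g$ from Specification \ref{assumption_g_h}: on $(0,\infty)$ we have $g \equiv 0$, while on $(-\infty,0]$, $g$ is negative and increasing. Since the stopping region here is $(-\infty, A^*]$ with $A^* > 0$, the relevant range $x \in (0, A^*)$ lies where $g(x) = 0$. First I would write out $(\mathcal{L}-q)g(x) + h(x)$ explicitly at a point $x \in (0,A^*)$ using the integro-differential form of $\mathcal{L}$. Because $g(x) = 0$ and $g'(x) = 0$ (indeed $g$ vanishes on a neighborhood of $x$, so all the local terms $cg'(x)$, $\tfrac12\sigma^2 g''(x)$, $-qg(x)$, and the compensator term $g'(x)z\mathbbm{1}_{\{0<z<1\}}$ are zero), the expression collapses to
\begin{align*}
(\mathcal{L}-q)g(x) + h(x) = \int_0^\infty g(x-z)\,\Pi(\diff z) + h(x) = \int_x^\infty g(x-z)\,\Pi(\diff z) + h(x),
\end{align*}
where the lower limit becomes $x$ because $g(x-z) = 0$ for $z < x$ (as then $x - z > 0$). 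Since $g < 0$ on $(-\infty,0)$, the integral term is $\leq 0$, but this alone is not enough — I need the whole thing to be $\leq 0$, which requires controlling $h(x)$ against this negative quantity.

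The key step is to invoke the defining relation $\Psi(A^*) = 0$, which by the definition of $\Psi$ (recalling $g(A^*) = 0$ since $A^* > 0$) reads
\begin{align*}
0 = \Psi(A^*) = \rho_{g,A^*}^{(q)} + \int_0^\infty e^{-\Phi(q)y} h(y+A^*)\,\diff y.
\end{align*}
I would rewrite $\rho_{g,A^*}^{(q)}$ using the problem-specific form given just before the lemma, namely $\rho_{g,A^*}^{(q)} = \int_{A^*}^\infty \Pi(\diff u)\int_0^{u-A^*} e^{-\Phi(q)y} g(y+A^*-u)\,\diff y$. The plan is then to compare $(\mathcal{L}-q)g(x)+h(x) = \int_x^\infty g(x-z)\,\Pi(\diff z) + h(x)$ term by term against the rearranged identity $0 = \rho_{g,A^*}^{(q)} + \int_0^\infty e^{-\Phi(q)y}h(y+A^*)\,\diff y$. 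Using monotonicity of $h$ (so $h(y+A^*) \geq h(x)$ when $y + A^* \geq x$, i.e. $y \geq x - A^*$; and since $x < A^*$ this holds for all $y \geq 0$, giving $\int_0^\infty e^{-\Phi(q)y} h(y+A^*)\,\diff y \geq h(x)/\Phi(q)$, hence $h(x) \leq \Phi(q)\int_0^\infty e^{-\Phi(q)y}h(y+A^*)\diff y$) and monotonicity of $g$ to bound the jump integrals, I expect the negativity of $\rho_{g,A^*}^{(q)}$ to absorb the $h$ term. More carefully, from the identity $h(x) \le -\Phi(q)\rho_{g,A^*}^{(q)}$, it suffices to show $\int_x^\infty g(x-z)\,\Pi(\diff z) \le \Phi(q)\rho_{g,A^*}^{(q)}$, i.e. $\int_x^\infty g(x-z)\,\Pi(\diff z) \le \Phi(q)\int_{A^*}^\infty \Pi(\diff u)\int_0^{u-A^*}e^{-\Phi(q)y}g(y+A^*-u)\,\diff y$; since $\Phi(q)\int_0^{u-A^*}e^{-\Phi(q)y}\,\diff y = 1 - e^{-\Phi(q)(u-A^*)} \le 1$ and $g \le 0$, and since $g$ increasing gives $g(y+A^*-u) \ge g(A^*-u)$, I would match the substitution $z = u$ (using $x < A^*$ so the domain $z \ge A^* \subset z \ge x$) and check $g(x-u) \le (1-e^{-\Phi(q)(u-A^*)})g(A^*-u)$ for $u > A^*$, which follows since $x - u < A^* - u < 0$ so $g(x-u) \le g(A^*-u) \le 0$ and $(1-e^{-\Phi(q)(u-A^*)}) \in [0,1)$ multiplies a negative number making it less negative — wait, that goes the wrong way, so instead I would be more generous in the lower bound on $h$ by noting we only integrated over $z \ge x$ versus $u \ge A^*$, leaving the extra mass on $(x, A^*)$ of the first integral to be controlled by the slack $h(x) \le \Phi(q)\int_0^\infty e^{-\Phi(q)y}h(y+A^*)\,\diff y$ being strict.

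The main obstacle will be getting the term-by-term comparison to close cleanly: the jump-integral $\int_x^\infty g(x-z)\Pi(\diff z)$ integrates $\Pi$ over $(x,\infty)$ while $\rho_{g,A^*}^{(q)}$ effectively integrates over $(A^*,\infty)$, and on the overlap the integrand $g(x-u)$ (with $x < A^*$) is more negative than $g(A^*-u)$, which helps, but one still must absorb the contribution from $u \in (x, A^*)$ and the factor $1 - e^{-\Phi(q)(u-A^*)} < 1$. I expect this is handled by the slack between $h(x)$ and $\Phi(q)\int_0^\infty e^{-\Phi(q)y}h(y+A^*)\,\diff y$ — which is positive and strict precisely because $h$ is strictly increasing — together with $\rho$-terms being strictly negative; alternatively one may route through the representation $u_{A^*}(x) = g(x) + W^{(q)}(0)\Psi(x)$ from \eqref{cont_fit_difference} and the fact that $\Psi(x) < 0$ for $x < A^*$ (by \eqref{monotonicity_precautionary} and $\Psi(A^*)=0$) combined with Proposition \ref{proposition_harmonic} applied to a slightly-higher threshold and a limiting argument, expressing $(\mathcal{L}-q)g + h$ as a limit of $(\mathcal{L}-q)u_A + h = 0$ evaluated off the continuation region. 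I would first attempt the direct term-by-term route and fall back on the $\Psi$-monotonicity route if the jump-integral bookkeeping proves too delicate.
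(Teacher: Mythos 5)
Your plan follows the paper's proof almost step for step: reduce to $(\mathcal{L}-q)g(x)+h(x)=\int_x^\infty \Pi(\diff u)\,g(x-u)+h(x)$ on $(0,A^*)$, invoke $\Psi(A^*)=0$, use monotonicity of $h$ to get $h(x)\le \Phi(q)\int_0^\infty e^{-\Phi(q)y}h(y+A^*)\diff y=-\Phi(q)\rho_{g,A^*}^{(q)}$, and then dominate the jump integral by $\Phi(q)\rho_{g,A^*}^{(q)}$ using monotonicity and nonpositivity of $g$. The problem is that you do not finish: you abandon the direct comparison on the grounds that it ``goes the wrong way,'' but your own inequality is in fact correct. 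Since $g(A^*-u)\le 0$ and $1-e^{-\Phi(q)(u-A^*)}\in[0,1)$, multiplying by that factor moves the value toward zero, i.e.\ \emph{increases} it, so $(1-e^{-\Phi(q)(u-A^*)})\,g(A^*-u)\ \ge\ g(A^*-u)\ \ge\ g(x-u)$ for $u>A^*$ (the last step because $x<A^*$ and $g$ is increasing), which is exactly the termwise bound you need; together with the observation that the leftover piece $\int_x^{A^*}\Pi(\diff u)\,g(x-u)\le 0$ only helps, the direct route closes with no ``slack'' required. Your two proposed rescues are not sound substitutes: Specification 4.1 only assumes $h$ increasing, not strictly, so the gap between $h(x)$ and $\Phi(q)\int_0^\infty e^{-\Phi(q)y}h(y+A^*)\diff y$ need not be strict (and, as just noted, is not needed); and the fallback via $(\mathcal{L}-q)u_A+h=0$ for thresholds slightly above $x$ does not obviously produce $(\mathcal{L}-q)g(x)$, since for $x<A$ the generator applied to $u_A$ at $x$ sees $u_A$ globally (where $u_A\neq g$ above $A$), so that limiting argument would need substantial work you have not supplied.

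For comparison, the paper sidesteps the factor $1-e^{-\Phi(q)(u-A^*)}$ entirely: starting from $\int_0^\infty e^{-\Phi(q)y}h(y+A^*)\diff y+\int_{A^*}^\infty\Pi(\diff u)\int_0^{u-A^*}e^{-\Phi(q)y}g(y+A^*-u)\diff y=0$, it bounds $h(y+A^*)\ge h(x)$ and $g(y+A^*-u)\ge g(x-u)$ inside the identity, then enlarges the inner integral from $\int_0^{u-A^*}$ to $\int_0^\infty$ and the outer one from $\int_{A^*}^\infty$ to $\int_x^\infty$ (both enlargements permissible because $g\le 0$), and finally factors out $\int_0^\infty e^{-\Phi(q)y}\diff y=1/\Phi(q)>0$ to conclude $h(x)+\int_x^\infty\Pi(\diff u)g(x-u)\le 0$. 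Either that bookkeeping or your own (correct) termwise inequality would complete the proof; as written, the argument is left open.
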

\begin{proof}
Because $g(x) = g'(x)=0$ for every $x > 0$,
\begin{align}
 (\mathcal{L}-q) g(x) + h(x) = \int_x^\infty \Pi(\diff u) g(x-u) + h(x), \quad x \in (0,A^*). \label{generator_egami_yamazaki}
\end{align}
We shall show that this is negative.
Because $A^* > 0$, we must have
\begin{align*}
\int_0^\infty e^{- \Phi(q) y}   h(y+A^*) \diff y + \int_{A^*}^\infty \Pi (\diff u)  \int_0^{u-A^*} e^{- \Phi(q) y}   g(y+A^*-u)\diff y = 0,
\end{align*}
and hence
\begin{align*}
0 &\geq h(x) \int_0^\infty e^{- \Phi(q) y}    \diff y + \int_{A^*}^\infty \Pi (\diff u)  g(x-u) \int_0^{u-{A^*}} e^{- \Phi(q) y}   \diff y \\
&\geq h(x) \int_0^\infty e^{- \Phi(q) y}    \diff y + \int_{A^*}^\infty \Pi (\diff u)  g(x-u) \int_0^\infty e^{- \Phi(q) y}   \diff y \\
&\geq h(x) \int_0^\infty e^{- \Phi(q) y}    \diff y + \int_x^\infty \Pi (\diff u)  g(x-u) \int_0^\infty e^{- \Phi(q) y}   \diff y \\
&= \Big( h(x) + \int_x^\infty \Pi (\diff u)  g(x-u) \Big) \int_0^\infty e^{- \Phi(q) y}    \diff y,
\end{align*}
where the first inequality holds because $g$ and $h$ are increasing
and $x < A^*$, the second holds because $g$ is non-positive, and the
third holds because $x < A^*$ and $g$ is non-positive.  This together with \eqref{generator_egami_yamazaki} shows the result.
\end{proof}

Now the optimality holds by the martingale argument.  For the rest of the proof, we refer the reader to the proof of Proposition 4.1 in \cite{Egami-Yamazaki-2010-1}.
\begin{proposition}
 If $A^* > 0$, then $\tau_{A^*}$ is the optimal stopping time  and the value function is given by $u_{A^*}(x)$ for every $x > 0$.   If $A^*=0$, then the value function is given by $\lim_{A \downarrow 0}u_{A}(x)$ for every $x > 0$.
\end{proposition}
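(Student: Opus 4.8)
The plan is to upgrade the three pointwise conditions (i)--(iii) of Section~\ref{subsection_obtaining} into a global optimality statement by the standard martingale verification argument, using that every admissible stopping time satisfies $\tau \leq \theta$. All three ingredients are at hand. Since $A^* > 0$, the first down-crossing time obeys $\tau_{A^*} \leq \tau_0 = \theta$ a.s., so $\tau_{A^*} \in \S$. Condition (i), $u_{A^*} \geq g$ (hence $u_{A^*} \geq 0$ on $(0,\infty)$, since $g \equiv 0$ there), follows from \eqref{monotonicity_precautionary} -- which gives $\Psi(A) > 0$ for $A > A^*$, i.e.\ \eqref{inequality_above_A_star} -- together with Lemma~\ref{lemma_about_i}. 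Condition (ii), $(\mathcal{L}-q) u_{A^*} + h \equiv 0$ on $(A^*,\infty)$, is Proposition~\ref{proposition_harmonic}. Condition (iii) need only be verified on $(0,A^*)$: for admissible $\tau$ the path $X$ stays in $(0,\infty)$ on $[0,\tau)$, so it suffices that $(\mathcal{L}-q) u_{A^*} + h = (\mathcal{L}-q) g + h \leq 0$ on $(0,A^*)$ (using $u_{A^*} = g$ there), which is precisely the lemma proved just above, and the non-strict inequality is enough for the supersolution property used next.

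The heart of the argument is the change-of-variable step. By the continuous- and smooth-fit results of Section~\ref{sec:fit}, $u_{A^*}$ is $C^1$ on $\R \setminus \{A^*\}$ and continuous at $A^*$ with locally bounded derivative near $A^*$ when $\sigma = 0$, and $C^2$ on $\R \setminus \{A^*\}$ and differentiable at $A^*$ with locally bounded second derivative near $A^*$ when $\sigma > 0$; this is enough regularity to apply the It\^o--Meyer change-of-variable formula to $e^{-q t} u_{A^*}(X_t)$. Localizing the local-martingale part and taking expectations, for every $\tau \in \S$ and $t \geq 0$,
\begin{align*}
u_{A^*}(x) \geq \E^x \left[ e^{-q(t \wedge \tau)} u_{A^*}(X_{t \wedge \tau}) + \int_0^{t \wedge \tau} e^{-q s} h(X_s) \diff s \right],
\end{align*}
using $(\mathcal{L}-q) u_{A^*} + h \leq 0$ along the path (equality on $(A^*,\infty)$, the lemma on $(0,A^*)$, and $X_s \in (0,\infty)$ for $s < \tau \leq \theta$). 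Letting $t \uparrow \infty$: on $\{\tau < \infty\}$ we have $u_{A^*}(X_\tau) \geq g(X_\tau) 1_{\{X_\tau \leq 0\}}$ (it equals $g(X_\tau)$ when $X_\tau \leq 0$ and is at least $0 = g(X_\tau) 1_{\{X_\tau \leq 0\}}$ when $X_\tau > 0$), while on $\{\tau = \infty\} = \{\theta = \infty\}$ we have $e^{-q t} u_{A^*}(X_t) \to 0$; combined with monotone convergence for the running-reward integral this yields $u_{A^*}(x) \geq \E^x\left[ e^{-q \tau} g(X_\tau) 1_{\{\tau < \infty\}} + \int_0^\tau e^{-q t} h(X_t) \diff t \right]$ for every $\tau \in \S$. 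The same computation with $\tau = \tau_{A^*}$, where (ii) makes the Dynkin identity an equality and $u_{A^*} = g$ at the stopping point, turns this into equality, so $\tau_{A^*}$ is optimal and $u = u_{A^*}$ on $(0,\infty)$.

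For $A^* = 0$, I would argue that $\lim_{A \downarrow 0} u_A(x)$ arises as the limit of the upper bounds above as the threshold degenerates to the boundary, still dominates the payoff of every $\tau \in \S$, and is attained in the limit, exactly as in the proof of Proposition~4.1 of \cite{Egami-Yamazaki-2010-1}. The step I expect to be the main obstacle is the passage $t \to \infty$ in the verification inequality: one must control the growth of $u_{A^*}$ so that $e^{-q t} u_{A^*}(X_t) \to 0$ on $\{\tau = \infty\}$ and justify interchanging the limit with $\E^x$, which is where the explicit scale-function representation of $u_{A^*}$ (Lemmas~\ref{lemma_gamma_1}--\ref{lemma_gamma_2}) and the integrability Assumption~\ref{assump_h} enter; the single non-smooth point $A^*$ is only a routine technical wrinkle, absorbed by the fit conditions.
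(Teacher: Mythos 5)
Your proposal is correct and follows essentially the same route as the paper: it assembles conditions (i)--(iii) exactly as the paper does (via \eqref{monotonicity_precautionary} with Lemma \ref{lemma_about_i}, Proposition \ref{proposition_harmonic}, and the lemma preceding the proposition), and then carries out the standard It\^o--Meyer/martingale verification and the $A^*=0$ limiting argument, which the paper itself simply delegates to the proof of Proposition 4.1 in \cite{Egami-Yamazaki-2010-1}. Your explicit sketch of that verification step (regularity at $A^*$ from the fit results, localization, passage $t\uparrow\infty$) is consistent with what the cited reference does, so there is no substantive difference in approach.
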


\subsection{Other Examples}  The results of this paper are applicable to a wide range of optimal stopping problems.  Here we give a brief review of two recent papers where these are used.
%
%

Surya and Yamazaki \cite{Surya_Yamazaki_2012} generalized the spectrally negative \lev model \cite{ Hilberink_Rogers_2002, Kyprianou_Surya_2007} of the optimal capital structure problem with endogenous bankruptcy, originally studied by \cite{Leland_1994, Leland_Toft_1996}.  The problem is to obtain an optimal bankruptcy level so as to maximize the company's equity value.  This problem gives a classical framework of solving the tradeoff between minimizing bankruptcy costs and maximizing tax benefits in debt financing.
Using the results of this paper (in particular,  Propositions \ref{lemma_derivative_A}, \ref{lemma_continuous_fit}  and \ref{lemma_smooth_fit2}), they have succeeded in incorporating the scale effects by allowing the values of bankruptcy costs and tax benefits dependent on the firm's asset value.  Their results reduce to those of \cite{Hilberink_Rogers_2002, Kyprianou_Surya_2007} in the original setting where the bankruptcy cost is a fixed fraction of the asset value and the tax benefit rate is constant.  Furthermore, a series of numerical results validate the optimality of their solutions and hence also the main results of this current paper.

%
%

Yamazaki \cite{Yamazaki_2012} considered a multiple-stopping version of the problem discussed in  Subsection \ref{subsection_McKean} and also its generalization where $g$ is a general decreasing and concave function.  The optimal strategy is given by an increasing sequence of stopping times of threshold type as in \eqref{def_tau_A}, and these can be computed recursively without intricate computation.  The numerical results confirm the optimality for the one-stage case (as discussed in  Subsection \ref{subsection_McKean} of this current paper) and also for the multiple-stage case.

\section{Concluding Remarks} \label{section_conclusion}
We have discussed the optimal stopping problem for spectrally negative \lev processes.  By expressing the expected payoff  via the scale function, we achieved the first-order condition as well as the continuous/smooth fit condition and showed their equivalence.  The results obtained here can be applied to a wide range of optimal stopping problems for spectrally negative \lev processes.  As examples, we gave a short proof for the perpetual American option pricing problem and solved an extension to Egami and Yamazaki \cite{Egami-Yamazaki-2010-1}.

A natural direction of future research is to generalize it to the two-sided barrier case.  In this case, it is easily conjectured that the additional smooth fit (or the first-order) condition at the upper barrier needs to be incorporated.  This will give two equations for the lower and upper barrier levels; the solutions naturally become the optimal threshold levels.
Applications include, e.g. American strangles as in \cite{Sheu_2013}. 

Another direction is to pursue similar results for optimal stopping games.  Typically, the equilibrium strategies are given by stopping times of threshold type as in \cite{Baurdoux2008,Baurdoux2009, Leung_Yamazaki_2011} .  Similarly to the results obtained in this paper, the expected payoff admits expressions in terms of the scale function and hence the first-order condition and the continuous/smooth fit can be obtained analytically.  

Finally, the results can be extended to a general \lev process with both positive and negative jumps.    This can potentially be obtained in terms of the Wiener-Hopf factor alternatively to the scale function. 


\appendix

\section{Proofs}

\subsection{Proof of Lemma \ref{lemma_lipschitz}} \label{proof_lemma_lipschitz}
By the assumption (1) and Taylor expansion, we can take $0 < \epsilon < 1$ such that, for any $0 < z < u < \epsilon$ and $\varrho_{A,\epsilon} := \max_{0 \leq \xi \leq \epsilon}|g''(A-\xi)| < \infty$,
\begin{align}
|g (A -u+z) - g(A)| \leq (u-z) |g'(A)| + \frac 1 2 (u-z)^2 \varrho_{A,\epsilon} \leq u |g'(A)| + \frac 1 2 u^2 \varrho_{A,\epsilon}. \label{taylor}
\end{align}
Therefore, by \eqref{integrability_levy_measure},
\begin{align*}
\int_0^{\epsilon} \Pi (\diff u)  \int_0^{u} e^{-\Phi(q) z} |g(z+A-u)-g(A)| \diff z\leq \int_0^{\epsilon} \Pi (\diff u) \Big( u^2 |g'(A)| + \frac 1 2 u^3 \varrho_{A,\epsilon} \Big) < \infty.
\end{align*}
On the other hand, by \eqref{integrability_of_g}, 
\begin{align*}\int_{\epsilon}^\infty \Pi (\diff u)  \int_0^{u} e^{-\Phi(q) z} |g(z+A-u)-g(A)| \diff z  \leq \frac 1 {\Phi(q)}\int_{\epsilon}^\infty \Pi (\diff u) \max_{A-u \leq \zeta \leq A}|g(\zeta)-g(A)| < \infty.
\end{align*}
Combining the above, the proof is complete.

\subsection{Proof of Lemma \ref{lemma_derivative_A_individual}} \label{proof_lemma_derivative_A_individual}
\underline{Proof of \eqref{derivative_A_individual_1}}:  Define $\varrho(A) := \int_0^\infty \Pi (\diff u)q(A;u)$ with
\begin{align*}
q(A;u) := \int_A^{u+A} e^{- \Phi(q) y}   [g(y-u) - g(A)]\diff y, \quad u \geq 0.
\end{align*}
By assumption, we can choose $0 < \epsilon < 1$ such that $g$ is $C^2$ on $[A-\epsilon, A+ \epsilon]$.

We choose $0 < \delta < \epsilon$ that satisfies
\eqref{integrability_sup} and fix $0 < c < \delta$.  By the mean value theorem, there exists $\xi \in (0,c)$ such that
\begin{align*}
q'(A+\xi;u) = \frac {q(A+c; u)-q(A; u)} c.
\end{align*}
For every $z \in (A, A+ c)$, we have
\begin{align}
q'(z;u) 
 &=  e^{-\Phi(q) z}  \Big( g(z)-g(z-u)  -   \frac {1 - e^{-\Phi(q) u}} {\Phi(q)} {g'(z)} \Big). \label{q_prime_expand}
\end{align}
The Taylor expansion implies that, for every $0 < u < \delta$,
\begin{align*}
\frac {|q(A+c;u)-q(A;u)|} c = |q'(A+\xi;u) | &\leq e^{-\Phi(q) (A+\xi)}  \frac {u^2} 2 \left[  \max_{0 \leq  \zeta \leq u}|g''(A+\xi-\zeta)| + \Phi(q) |g'(A+\xi)| \right] \\ &\leq e^{-\Phi(q) A}  \frac {u^2} 2 \left[  \max_{-\delta \leq  y \leq \delta}|g''(A+y)| + \Phi(q) \max_{0 \leq  y \leq \delta} |g'(A+y)| \right],
\end{align*}
uniformly in $c \in (0, \delta)$.  The integral of the right hand side over $(0,\delta)$ with respect to $\Pi$ is, 
by \eqref{integrability_levy_measure},
\begin{align*}
\frac 1 2 e^{-\Phi(q) A} \left[  \max_{-\delta \leq  y \leq \delta}|g''(A+y)| + \Phi(q) \max_{0 \leq  y \leq \delta} |g'(A+y)| \right] \int_0^{\delta} u^2 \Pi(\diff u)  < \infty.
\end{align*}
On the other hand, for $u > \delta$, by \eqref{q_prime_expand},
\begin{align*}
 \frac {|q(A+c; u)-q(A; u)|} c \leq e^{-\Phi(q) A}  \Big( \max_{0 \leq \zeta \leq \delta}|g(A+\zeta)-g(A+\zeta-u)|  + \max_{0 \leq \zeta \leq \delta} \frac {|g'(A+\zeta)|} {\Phi(q)}  \Big)
\end{align*}
whose integral over $(\delta, \infty)$ equals
\begin{multline*}
e^{-\Phi(q) A}  \left(  \int_{\delta}^\infty \max_{0 \leq \zeta \leq \delta} \left| (g(A+\zeta)-g(A+\zeta-u) \right|  \Pi(\diff u) + \max_{0 \leq \zeta \leq \delta}  \frac {|g'(A+\zeta)|} {\Phi(q)}  \Pi(\delta, \infty) \right),
\end{multline*}
which is finite by \eqref{integrability_sup} and how $\delta$ is chosen.


This allows us to apply the dominated convergence theorem, and we obtain
\begin{multline*}
\lim_{c \downarrow 0}\frac {\varrho(A+c)-\varrho(A)} c = \int_0^\infty \Pi(\diff u) \lim_{c \downarrow 0} \frac {q(A+c;u)-q(A;u)} c \\ = \int_0^\infty \Pi(\diff u) q'(A;u) 
= e^{-\Phi(q) A} \int_0^\infty \Pi (\diff u)   \Big( g(A)-g(A-u)  -  {g'(A)} \frac {1 - e^{-\Phi(q) A}} {\Phi(q)} \Big).
\end{multline*}
The proof for the left-derivative is similar, and this completes the proof of \eqref{derivative_A_individual_1}.

\underline{Proof of \eqref{derivative_A_individual_2}}:  Define
\begin{align}
\widetilde{q}(z; u,x) := \int_z^{(u+z) \wedge x} W^{(q)}(x-y)   [g(y-u)-g(z)] \diff y, \quad z \in \R \; \textrm{and} \; u > 0. \label{q_tilde}
\end{align}
Then, by \eqref{gamma_2_rewrite}, we have $\varphi_{g,A}^{(q)}(x) = \int_0^\infty \Pi (\diff u)\widetilde{q}(A;u,x)$.
We use the same $0 < \delta < \epsilon$ as in the proof of \eqref{derivative_A_individual_1} above and fix $c$ and $\varepsilon$
such that
\begin{align*}
0 < c < \delta \wedge \frac {x-A} 4 =: \varepsilon. 
\end{align*}

For every fixed $0 < u < \varepsilon$,  our assumptions imply that $\widetilde{q}(\cdot;u,x)$ is $C^2$ on $(A,A+c)$.
By the mean value theorem, there exists $\xi \in (0,c)$ such that
\begin{align} \label{q_prime_difference}
\widetilde{q}'(A+\xi;u,x) = \frac {\widetilde{q}(A+c;u,x)-\widetilde{q}(A;u,x)} c.
\end{align}
Given $z$ at which $g$ is differentiable and satisfies $u + z < x$, differentiating \eqref{q_tilde} gives
\begin{align*}
\widetilde{q}'(z; u,x) = W^{(q)}(x-z)   (g(z)-g(z-u))  - g'(z) \int_z^{u+z} W^{(q)}(x-y)   \diff y.
\end{align*}
Because $x-u-A-\xi > \frac {x-A} 4 > 0$ (and thus $u + (A+\xi) < x$) and $g$ is differentiable at $A+\xi$,
\begin{align*}
&\frac {|\widetilde{q}(A+c;u,x)-\widetilde{q}(A;u,x)|} c  = |\widetilde{q}'(A+\xi;u,x)| \\ &= \left| W^{(q)}(x-A-\xi) (g(A+\xi)-g(A+\xi-u))  - g'(A+\xi) \int_{A+\xi}^{u+A+\xi} W^{(q)}(x-y) \diff y \right| \\
&\leq W^{(q)}(x-A-\xi)  \left| g(A+\xi)-g(A+\xi-u) - u g'(A+\xi) \right| \\
&\qquad + \left| g'(A+\xi) \int_{A+\xi}^{u+A+\xi} (W^{(q)}(x-A-\xi)-W^{(q)}(x-y)) \diff y \right| \\
&\leq W^{(q)}(x-A) \left|  g(A+\xi)-g(A+\xi-u) - u g'(A+\xi) \right| \\&\qquad + u |g'(A+\xi)| \left|   W^{(q)}(x-A-\xi)-W^{(q)}(x-u-A-\xi)  \right| \\
&\leq   f_1(A;u,x)  + f_2(A;u,x)
\end{align*}
where 
\begin{align*}
f_1(A;u,x) &:= W^{(q)}(x-A) \max_{0 \leq \zeta \leq \varepsilon}|g(A+\zeta)-g(A+\zeta-u) - u g'(A+\zeta)|, \\
f_2(A;u,x) &:= u \max_{0 \leq \zeta \leq \varepsilon}|g'(A+\zeta)| \max_{0 \leq \zeta \leq \varepsilon}  \left| W^{(q)}(x-A-\zeta) - W^{(q)}(x-u-A-\zeta)  \right|.
\end{align*}
First, $\int_0^\varepsilon \Pi(\diff u) f_1 (A;u,x)$ is finite because, for every $u \leq \varepsilon$, we have $u \leq \delta$ and 
\begin{align*}
\max_{0 \leq \zeta \leq \varepsilon} |g(A+\zeta)-g(A+\zeta-u) - u g'(A+\zeta) | \leq  \frac {u^2} 2 \max_{A - \delta \leq \zeta \leq A+\delta}| g''(\zeta)|,
\end{align*}
which is $\Pi$-integrable over $(0,\varepsilon)$ by \eqref{integrability_levy_measure}.
  On the other hand, by \eqref{scale_function_asymptotic_version} and because $0 \leq \zeta \leq \varepsilon$ implies $x - u - A - \zeta > \frac {x-A} 4 > 0$, we have
\begin{align*}
&\left|  W^{(q)}(x-A-\zeta) - W^{(q)}(x-u-A-\zeta) \right|  \\ &= \left|  e^{\Phi(q) (x-A-\zeta)}W_{\Phi(q)}(x-A-\zeta) - e^{\Phi(q) (x-u-A-\zeta) }W_{\Phi(q)}(x-u-A-\zeta) \right| \\
&\leq \Big| \frac {e^{\Phi(q) (x-A-\zeta)} - e^{\Phi(q) (x-u-A-\zeta) } } {\psi'(\Phi(q))} \Big| + e^{\Phi(q) (x-u-A-\zeta)} \left| W_{\Phi(q)}(x-A-\zeta) - W_{\Phi(q)}(x-u-A-\zeta) \right| \\
&\leq e^{\Phi(q) (x-A)} \Big( \frac {1 - e^{-\Phi(q) u} } {\psi'(\Phi(q))}   +  u \max_{ \frac {x-A} 4 \leq y \leq x-A} W_{\Phi(q)}'(y) \Big),
\end{align*}
and hence
\begin{align*}
&\int_0^\varepsilon \Pi (\diff u) f_2(A;u,x)  
\leq \max_{0 \leq \zeta \leq \varepsilon}|g'(A+\zeta)| e^{\Phi(q) (x-A)}  \int_0^\varepsilon  u \Big( \frac {1 - e^{-\Phi(q) u} } {\psi'(\Phi(q))}   +  u \max_{ \frac {x-A} 4 \leq y \leq x-A} W_{\Phi(q)}'(y)  \Big) \Pi(\diff u),
\end{align*}
which is finite by  \eqref{integrability_levy_measure}.  

We now fix $u
> \varepsilon$ (which implies $u
> c$).  We have
\begin{align*}
&\frac {|\widetilde{q}(A+c;u,x)-\widetilde{q}(A;u,x)|} c \leq B_1(A,c;u,x) + B_2(A,c;u,x)
\end{align*}
where
\begin{align*}
B_1(A,c;u,x) &:= \frac 1 c \left[ \int_{(u+A) \wedge x}^{(u+A+c) \wedge x} W^{(q)}(x-y) |g(y-u)-g(A+c)| \diff y + \int_A^{A+c} W^{(q)}(x-y) |g(y-u)-g(A)| \diff y \right],  \\
B_2(A,c;u,x)& := \frac {|g(A+c) - g(A)|} c \int_{A+c}^{(u+A) \wedge x} W^{(q)}(x-y)  \diff y.
\end{align*}
For the former, we have
\begin{align*}
B_1(A,c;u,x) 
&\leq 3W^{(q)}(x-A)  \max_{A-u\leq z \leq A+c}|g(z)-g(A)| \\
&\leq  3 W^{(q)}(x-A) \Big( \max_{A-u\leq z \leq A}|g(z)-g(A)| + \max_{0 \leq \zeta \leq \delta}|g(A+ \zeta) - g(A+\zeta-u)|\Big) =: \bar{B}_1 (A;u,x) .
\end{align*}
Here the first inequality holds because $|g(y-u)-g(A+c)| \leq |g(y-u)-g(A)| + |g(A)-g(A+c)|$ and, for $(u+A) \wedge x \leq y \leq (u+A+c) \wedge x$, it holds that $A-u \leq A \wedge (x-u) \leq y-u \leq (A+c) \wedge (x-u) \leq A+c$.  For the second inequality, it holds trivially when the maximum is attained for some $A-u \leq z \leq A$.  If it is attained at $z = A+l$ for some $0 < l \leq c$.  Then, because $A-u \leq A+l-u \leq A$ (thanks to $c < u$) and $c < \delta$
\begin{align*}
\max_{A-u\leq z \leq A+c}|g(z)-g(A)| &\leq |g(A+l-u)-g(A)|  +  |g(A+l) - g(A+l-u)| \\ &\leq \max_{A-u \leq z \leq A}|g(z)-g(A)|  + \max_{0 \leq \zeta \leq \delta}|g(A+ \zeta) - g(A+\zeta-u)|.
\end{align*}
For the latter, by the $C^2$ property of $g$ in the neighborhood of $A$, how $\delta$ is chosen and $c < \delta$, we obtain
\begin{align*}
B_2(A,c;u,x) &\leq  \frac {|g(A+c) - g(A)|} c  \int_{A+c}^x W^{(q)}(x-y) \diff y \\ &\leq \Big( |g'(A)| + \frac \delta 2  \max_{A \leq \zeta \leq A+\delta}|g''(\zeta)|\Big) \int_{A}^x e^{\Phi(q) (x-y)}W_{\Phi(q)}(x-y) \diff y \\
&\leq \frac 1 {\Phi(q) \psi'(\Phi(q))}\Big( |g'(A)| + \frac \delta 2  \max_{A \leq \zeta \leq A+\delta}|g''(\zeta)|\Big) e^{\Phi(q) (x-A)} =: \bar{B}_2(A;x).
\end{align*}
Combining these,
\begin{align*}
&\int_\varepsilon^\infty \Pi (\diff u) (\bar{B}_1(A;u,x)  + \bar{B}_2(A;x)) \\ &\leq  3 W^{(q)}(x-A)  \int_\varepsilon^\infty \Pi (\diff u)  \Big( \max_{A-u\leq z \leq A}|g(z)-g(A)| + \max_{0 \leq \zeta \leq \delta}|g(A+ \zeta) - g(A+\zeta-u)|\Big) \\
&\qquad + \frac 1 {\Phi(q) \psi'(\Phi(q))}\Big( |g'(A)| + \frac \delta 2  \max_{A \leq \zeta \leq A+\delta}|g''(\zeta)|\Big) e^{\Phi(q) (x-A)} \Pi(\varepsilon, \infty),
\end{align*}
which is finite by \eqref{integrability_of_g} and \eqref{integrability_sup}.  

In summary, \eqref{q_prime_difference} is bounded uniformly in $c \in (0, \varepsilon)$ by a function which is $\Pi$-integrable.
Hence, by the dominated convergence theorem,
\begin{multline*}
\lim_{c \downarrow 0}\frac {\varphi_{g,A+c}^{(q)}(x) -\varphi_{g,A}^{(q)}(x)} c = \int_0^\infty \Pi(\diff u) \lim_{c \downarrow 0} \frac {\widetilde{q}(A+c;u,x)-\widetilde{q}(A;u,x)} c = \int_0^\infty \Pi(\diff u) \widetilde{q}'(A;u,x) \\
= \int_0^\infty \Pi (\diff u) \Big[  W^{(q)}(x-A) (g(A) - g(A-u)) - g'(A) \int_A^{(u+A) \wedge x}W^{(q)}(x-z) \diff z\Big].
\end{multline*}
The result for the left-derivative can be proved in the same way.

\subsection{Proof of Lemma \ref{lemma_for_smooth_fit}} \label{proof_lemma_for_smooth_fit}
It is known as in \cite{Chan_2009} that  $\sigma > 0$ guarantees that $W_{\Phi(q)}$ is twice
continuously differentiable and hence
$W_{\Phi(q)}'$ is continuous on $(0,\infty)$.  Furthermore,
\eqref{at_zero} implies $W_{\Phi(q)}'(0+) = \frac 2 {\sigma^2} < \infty$
and \eqref{scale_function_asymptotic_version} implies
$\lim_{x \uparrow \infty}W_{\Phi(q)}'(x)  = 0$.  Therefore, there exists $L < \infty$
such that
\begin{align*}
L := \sup_{x > 0}W_{\Phi(q)}'(x). 
\end{align*}


For every fixed $0 <  c < 1$ and $u > 0$,
\begin{multline}
\frac 1 c  \left|  \int_0^{u \wedge (x+c-A)} W^{(q)} (x+c-z-A) (g(z+A-u)-g(A)) \diff z \right. \\ \left.  - \int_0^{u \wedge (x-A)} W^{(q)} (x-z-A) (g(z+A-u)-g(A)) \diff z  \right| \leq  l_1(x,A,c, u) + l_2(x,A,c, u),  \label{ratio_c_g}
\end{multline}
where
\begin{align*}
l_1(x,A,c, u) &:= \int_0^{u \wedge (x-A)} q(x,c,z,A)  |g(z+A-u)-g(A)|  \diff z, \\
l_2(x,A,c, u) &:=  \int_{u \wedge (x-A)}^{u \wedge (x+c-A)} \frac {W^{(q)}(x+c-z-A)} c  |g(z+A-u)-g(A)|  \diff z, \\
q(x,c,z,A) &:= \frac {W^{(q)} (x+c-z-A) - W^{(q)} (x-z-A)} c.
\end{align*}

Because
\begin{align*}
q(x,c,z,A) &= \frac {e^{\Phi(q) (x+c-z-A)}W_{\Phi(q)} (x+c-z-A) - e^{\Phi(q)(x-z-A)}W_{\Phi(q)} (x-z-A)} c \\
&= e^{\Phi(q) (x-z-A)} \frac { (e^{\Phi(q) c} - 1)W_{\Phi(q)} (x+c-z-A) + \left(W_{\Phi(q)} (x+c-z-A) - W_{\Phi(q)} (x-z-A) \right)} c \\
&\leq e^{\Phi(q) (x-z-A)} \sup_{0 < \delta < 1}\Big( \frac { e^{\Phi(q) \delta} - 1} {\delta \psi'(\Phi(q))} + L \Big),
\end{align*}
we have
\begin{align*}
l_1(x,A,c, u) &\leq  \sup_{0 < \delta < 1}\Big( \frac { e^{\Phi(q) \delta} - 1} {\delta \psi'(\Phi(q))} + L \Big) \int_0^{u \wedge (x-A)} e^{\Phi(q) (x-z-A)}  |g(z+A-u)-g(A)|  \diff z =:  \bar{l}_1(x,A, u),
\end{align*}
which is $\Pi$-integrable as
\begin{align*}
\int_{0}^\infty \Pi(\diff u)\bar{l}_1(x,A,u) &\leq e^{\Phi(q) (x-A)} \Big( \frac { e^{\Phi(q) c} - 1} {c \psi'(\Phi(q))} + L \Big)   \overline{\rho}^{(q)}_{g,A} < \infty.
\end{align*}
On the other hand,
\begin{align*}
l_2(x,A,c, u) &\leq  1_{\{ u > x-A \}}{W^{(q)}(x+1-A)}  \max_{A-u \leq y \leq A} |g(y)-g(A)| \\
&\leq 1_{\{ u > x-A \}} \frac  {e^{\Phi(q) (x+1-A)}} {\psi'(\Phi(q))} \max_{A-u \leq y \leq A} |g(y)-g(A)| =: \bar{l}_2(x,A,u),
\end{align*}
which is also $\Pi$-integrable by \eqref{integrability_of_g}.

Now, by \eqref{ratio_c_g}, the dominated convergence theorem applies and noting $W^{(q)}(0)=0$,
\begin{align*}
\lim_{c \downarrow 0}\frac {\varphi_{g,A}^{(q)}(x+c) - \varphi_{g,A}^{(q)}(x)} c &=\int_0^\infty \Pi (\diff u) \frac \partial {\partial x}\int_0^{u \wedge (x-A)} W^{(q)} (x-z-A) (g (z+A -u) - g(A)) \diff z \\
&= \int_0^\infty \Pi (\diff u) \int_0^{u
\wedge (x-A)} W^{(q)'} (x-z-A) (g (z+A -u) - g(A)) \diff z.
\end{align*}
The left-derivative can be obtained in the same way.  This proves \eqref{varrho_derivative_g}.

For the proof of \eqref{varrho_derivative_convergence_g}, let
\begin{align*}
k(x,A, u) := \int_0^{u  \wedge (x-A)} W^{(q)'} (x-z-A) |g (z+A -u) - g(A)| \diff z, \quad x > A \; \textrm{and} \;  u > 0.
\end{align*}
Fix $A < x < A+1$ and choose $0 < \epsilon < 1$ and $\varrho_{A,\epsilon}$ as in the proof of Lemma \ref{lemma_lipschitz}. By \eqref{taylor}, for all $0 < u < \epsilon$,
\begin{align} \label{k_left_1}
\begin{split}
k(x,A, u) &\leq  \Big( u |g'(A)| + \frac 1 2 u^2 \varrho_{A,\epsilon}\Big) \int_0^{u  \wedge (x-A)} W^{(q)'} (x-z-A) \diff z \\
 &=  \Big( u |g'(A)| + \frac 1 2 u^2 \varrho_{A,\epsilon} \Big) \left( W^{(q)} (x-A) - W^{(q)} ((x-A-u) \vee 0) \right) \\
&\leq  e^{\Phi(q)}  \Big( u |g'(A)| + \frac 1 2 u^2 \varrho_{A,\epsilon} \Big)  \Big(Lu +  \frac {1 - e^{-\Phi(q) u}} {\psi'(\Phi(q))} \Big),
\end{split}
\end{align}
where the last inequality holds because, by \eqref{scale_function_asymptotic_version},
\begin{align*}
&W^{(q)} (x-A) - W^{(q)} ((x-A-u) \vee 0) \\ &=  e^{\Phi(q)(x-A)} \left[ \left( W_{\Phi(q)}(x-A) - W_{\Phi(q)}((x-A-u) \vee 0)\right) + \left( 1 - e^{-\Phi(q) (u \wedge (x-A))} \right) W_{\Phi(q)}((x-A-u) \vee 0) \right] \\ &\leq  e^{\Phi(q)(x-A)} \Big(Lu +  \frac {1 - e^{-\Phi(q) u}} {\psi'(\Phi(q))} \Big) \leq  e^{\Phi(q)}   \Big(Lu +  \frac {1 - e^{-\Phi(q) u}} {\psi'(\Phi(q))} \Big).
\end{align*}
On the other hand, for $u \geq \epsilon$,
\begin{align} \label{k_left_2}
\begin{split}
k(x,A, u) &\leq  \max_{A-u \leq y \leq A}|g(y) - g(A)|  \int_0^{u \wedge (x-A)} W^{(q)'} (x-z-A) \diff z \\ &\leq W^{(q)}(1) \max_{A-u \leq y \leq A}|g(y)-g(A)|.
\end{split}
\end{align}
If we define $\bar{k}(A, u)$ as the right hand sides of \eqref{k_left_1} and \eqref{k_left_2} for $0 < u < \epsilon$ and for $u \geq \epsilon$, respectively, then
\begin{align*}
\int_0^\infty \Pi(\diff u) \bar{k}(A, u) = &\int_0^\epsilon \Pi(\diff u)  e^{\Phi(q)}  \Big( u |g'(A)| + \frac 1 2 u^2 \varrho_{A,\epsilon} \Big)  \Big(Lu +  \frac {1 - e^{-\Phi(q) u}} {\psi'(\Phi(q))} \Big) \\
&+ \int_\epsilon^\infty \Pi(\diff u)  W^{(q)}(1) \max_{A-u \leq y \leq A}|g(y)-g(A)|,
\end{align*}
which is clearly finite by \eqref{integrability_levy_measure} and \eqref{integrability_of_g}. 

Now we can interchange the limit via the dominated convergence theorem as $x \downarrow A$ in \eqref{varrho_derivative_g} and obtain
\eqref{varrho_derivative_convergence_g}. This completes the proof.

\subsection{Proof of Proposition \ref{proposition_harmonic}} \label{proof_proposition_harmonic}
For all $B > x > A^*$, we have by the strong Markov property,
\begin{align*}
\E^x \left[ e^{-q \tau_{A^*}} g(X_{\tau_{A^*}}) | \mathcal{F}_{t \wedge \tau_{A^*} \wedge \tau_B^+}\right] = e^{-q(t \wedge \tau_{A^*} \wedge \tau_B^+)} f(X_{t \wedge \tau_{A^*}\wedge \tau_B^+}).
\end{align*}
Taking expectation on both sides we obtain $f(x) = \E^x \left[ e^{-q \tau_{A^*}} g(X_{\tau_{A^*}}) \right] = \E^x \left[ e^{-q(t \wedge \tau_{A^*} \wedge \tau_B^+)} f(X_{t \wedge \tau_{A^*}\wedge \tau_B^+}) \right]$.
Hence $\left\{e^{-q(t \wedge \tau_{A^*}\wedge \tau_B^+)} f(X_{t \wedge \tau_{A^*} \wedge \tau_B^+}); t \geq 0 \right\}$ is a martingale and, because $B > A^*$ is arbitrary, $(\mathcal{L}-q) f(x) = (\mathcal{L}-q) (\Gamma_1(x;{A^*}) + \Gamma_2(x;{A^*}))  = 0$ on $({A^*},\infty)$; see also Section 4 of \cite{Biffis_Kyprianou_2010} for a more rigorous proof.

On the other hand, 
it is known that
\begin{align}
(\mathcal{L} - q) W^{(q)}(x)  =
(\mathcal{L} - q) Z^{(q)}(x)  = 0, \quad x
> 0. \label{w_generator_zero}
\end{align}
This together with Lemma \ref{lemma_w_h} gives
\begin{align*}
(\mathcal{L} - q) \Gamma_3(x;{A^*}) = - (\mathcal{L} - q) \left[ \int_{A^*}^x
W^{(q)} (x-y) h(y) \diff y \right] = -h(x). 
\end{align*}
Summing up these, we have the claim.

\bibliographystyle{abbrv}
\bibliographystyle{apalike}

\bibliographystyle{agsm}
\small{\bibliography{ospbib}}

\end{document}